\newtheorem{theorem}{Theorem}[section]
\numberwithin{equation}{section}
\newtheorem{proposition}[theorem]{Proposition}
\newtheorem{definition}[theorem]{Definition}
\newtheorem{remark}[theorem]{Remark}
\newtheorem{lemma}[theorem]{Lemma}
\newtheorem{algorithm}[theorem]{Algorithm}
\titleformat{\section}{\normalfont\scshape\centering}{\thesection.}{0.5em}{}
\titleformat*{\subsection}{\itshape}
\titleformat*{\subsubsection}{\itshape}
\providecommand{\keywords}[1]
{
	{\small\textit{Keywords:~~} #1}
}
\providecommand{\MSC}[1]
{
	{\small\textit{AMS MSC (2020):~~} #1}
}
\providecommand{\jumptmp}[2]{#1\llbracket{#2}#1\rrbracket}
\providecommand{\jump}[1]{\jumptmp{}{#1}}
\begin{document}
	\setlength{\abovedisplayskip}{5.5pt}
	\setlength{\belowdisplayskip}{5.5pt}
	\setlength{\abovedisplayshortskip}{5.5pt}
	\setlength{\belowdisplayshortskip}{5.5pt}

	\title{\vspace{-10mm} Explicit a posteriori error representation for variational problems and application to TV-minimization}
	\author[1]{Sören Bartels\thanks{Email: \texttt{bartels@mathematik.uni-freiburg.de}}}
	\author[2]{Alex Kaltenbach\thanks{Email: \texttt{kaltenbach@math.tu-berlin.de}}}
	\date{\today}
	    \affil[1]{\small{Department of Applied Mathematics, University of Freiburg, Hermann--Herder--Stra\ss e 10, 79104 Freiburg}}
		\affil[2]{\small{Institute of Mathematics, Technical University of Berlin, Straße des 17. Juni 136, 10623 Berlin}}
	\maketitle

	\pagestyle{fancy}
	\fancyhf{}
	\fancyheadoffset{0cm}
	\addtolength{\headheight}{-0.25cm}
	\renewcommand{\headrulewidth}{0pt} 
	\renewcommand{\footrulewidth}{0pt}
	\fancyhead[CO]{\textsc{Explicit error representation and application to TV-minimization}}
	\fancyhead[CE]{\textsc{S. Bartels and A. Kaltenbach}}
	\fancyhead[R]{\thepage}
	\fancyfoot[R]{}
	
	\begin{abstract}
		In this paper, we propose a general approach for explicit a posteriori error representation for convex minimization problems using basic convex duality relations. 
        Exploiting discrete orthogonality relations in the space of element-wise constant vector fields as well as a discrete integration-by-parts formula between the Crouzeix--Raviart~and~the~\mbox{Raviart--Thomas}~\mbox{element}, all convex duality relations are transferred to a discrete level, making the explicit a posteriori error representation  --initially based on continuous arguments only--  practicable~from~a numerical point of view. In addition, 
        we provide a generalized Marini formula for the primal solution that determines a discrete primal solution in terms of a given discrete dual solution. 
        We benchmark all these concepts via the Rudin--Osher--Fatemi model. This leads to an adaptive algorithm that yields a (quasi-optimal)
        linear~convergence~rate.
	\end{abstract}

	\keywords{Explicit a posteriori error representation; convex duality; Crouzeix--Raviart element; Raviart--Thomas element; Rudin--Osher--Fatemi model.}
	
	\MSC{35Q68; 49M25; 49M29; 65N30; 65N50}
	
	\section{Introduction}\label{sec:intro}
		\thispagestyle{empty}

    \hspace{5mm}The numerical analysis of the approximation of variational problems
    is challenging when these are non-differentiable, degenerate, or involve
    constraints. In particular, following established concepts for linear
    elliptic partial differential equations often leads to sub-optimal results only.
    The framework of convex duality provides an attractive concept to 
    reveal hidden information and structures to obtain quasi-optimal error representation formulas
under meaningful regularity conditions. Similar to \cite{Rep99,Repin18}, we first exploit this 
idea to derive explicit computable a posteriori error estimates for~a~\textit{natural} error 
measure. Then, this general result is  transferred to a non-differentiable model problem with discontinuous solutions. As a whole, our results, similar to \cite{Rep99,Repin18}, show that
the question of developing asymptotically exact a posteriori error estimators is
rather a question of identifying optimal error quantities. However, different from \cite{Rep99,Repin18}, we also propose a general approach for making our results practicable from a numerical~point~of~view.\enlargethispage{10mm}

Given a domain $\Omega\subseteq \mathbb{R}^d$,~${d\in \mathbb{N}}$, 
a convex energy density $\phi\colon\mathbb{R}\to \mathbb{R}\cup\{+\infty\}$, a
(Lebesgue)~mea-surable energy density $\psi\colon\Omega\times\mathbb{R}\to \mathbb{R}\cup\{+\infty\}$  that is convex with respect to~the~second~argument, and a Banach space $X$ consisting of functions defined in 
$\Omega$, we denote by the minimization of the energy functional $I\colon X\to \mathbb{R}\cup\{+\infty\}$, for every $v\in X$ defined by
\begin{align}
    I(v) \coloneqq \int_{\Omega}{\phi(\nabla v)\,\mathrm{d}x} + \int_{\Omega}{\psi(\cdot, v)\,\mathrm{d}x}\,,\label{intro:primal}
\end{align}
the \textit{primal problem}.

Its (Fenchel) \textit{dual problem} consists in the maximization of the functional $D\colon Y\to \mathbb{R}\cup\{-\infty\}$, where $Y$ is a Banach space consisting of vector fields defined in 
$\Omega$, for every $y\in Y$ is defined by
\begin{align}
D(y) \coloneqq -\int_{\Omega}{\phi^*(y)\,\mathrm{d}x}  - \int_{\Omega}{\psi^*(\cdot, \mathrm{div}\,y)\,\mathrm{d}x}\,.\label{intro:dual}
\end{align}
Here, $\phi^*\colon \mathbb{R}^d\to \mathbb{R}\cup\{+\infty\}$ and $\psi^*\colon \Omega\times \mathbb{R}\to \mathbb{R}\cup\{+\infty\}$ (with respect to the second argument) denote the (Fenchel) conjugates of $\phi\colon\mathbb{R}\to \mathbb{R}\cup\{+\infty\}$ and $\psi\colon\Omega\times\mathbb{R}\to \mathbb{R}\cup\{+\infty\}$,~respectively.
Under rather general conditions, cf.\  \cite{ZeiIII,ET99}, we have the well-posedness of  the 
primal problem  and the dual problem, i.e., the existence of a minimizer $u\in X$ of \eqref{intro:primal}, i.e., a \textit{primal solution}, and of a maximizer $z\in Y$~of~\eqref{intro:dual}, i.e., a \textit{dual solution}, and the \textit{strong duality relation}
\begin{align}
\min_{v\in X} I(v) = I(u)= D(z) = \max_{y\in Y} D(y)\,.\label{intro:strong_duality}
\end{align}
Since $u\hspace{-0.1em}\in \hspace{-0.1em}X$ and $z\hspace{-0.1em}\in\hspace{-0.1em} Y$ are optimal for \eqref{intro:primal} and \eqref{intro:dual}, respectively, it holds~$0\hspace{-0.1em}\in\hspace{-0.1em} \partial I(u)$~and~${0\hspace{-0.1em}\in\hspace{-0.1em} \partial D(z)}$.
In particular, for every $v\in X$ and $y\in Y$, the quantities
\begin{align}
\rho_I^2(v,u)& \coloneqq  I(v) - I(u)\,,\label{eq:rho_I}\\
\rho_{-D}^2(y,z)& \coloneqq D(z) - D(y)\,,\label{eq:rho_D}
\end{align}
are non-negative. They define distances, if \eqref{intro:primal} and \eqref{intro:dual}, respectively, are 
strictly~convex,~and are called coercivity functionals or optimal convexity measures.

For accessible and admissible approximations $v\hspace{-0.1em}\in\hspace{-0.1em} X$ and $y\hspace{-0.1em}\in\hspace{-0.1em} Y$ of the solutions $u \hspace{-0.1em}\in \hspace{-0.1em} X$~and~${z \hspace{-0.1em}\in \hspace{-0.1em} Y}$, given the definitions \eqref{eq:rho_I} and \eqref{eq:rho_D}, the strong duality relation \eqref{intro:strong_duality} implies~the~error~identity 
\begin{align}\label{intro:error_identity}
\begin{aligned}
\rho_I^2(v,u) + \rho_{-D}^2(y,z)
 = I(v) - D(z)
 \eqqcolon  \eta^2(v,y)\,.
\end{aligned}
\end{align}
Hence, the fully computable error estimator $\eta^2\colon X\times Y\to \mathbb{R}\cup\{+\infty\}$, cf.\ \eqref{intro:error_identity}, exactly
represents the sum of the primal and dual approximation errors, i.e., of \eqref{eq:rho_I} and \eqref{eq:rho_D}.

The \hspace{-0.1mm}error \hspace{-0.1mm}representation \hspace{-0.1mm}\eqref{intro:error_identity} \hspace{-0.1mm}can \hspace{-0.1mm}be \hspace{-0.1mm}seen \hspace{-0.1mm}as \hspace{-0.1mm}a \hspace{-0.1mm}generalization \hspace{-0.1mm}of \hspace{-0.1mm}the \hspace{-0.1mm}Prager--Synge
\hspace{-0.1mm}result, \hspace{-0.1mm}cf. \hspace{-0.1mm}\cite{PraSyn47,Brae09,Braess13}, which states that for the Poisson problem, i.e., $\phi\hspace*{-0.1em}\coloneqq\hspace*{-0.1em} \frac{1}{2}\vert \cdot\vert^2\hspace*{-0.1em}\in \hspace*{-0.1em} C^1(\mathbb{R}^d)$,~${\psi\hspace*{-0.1em}\coloneqq\hspace*{-0.1em} ((t,x)^\top\hspace*{-0.1em}\mapsto\hspace*{-0.1em} -f(x)t)\colon}$ $\Omega\times\mathbb{R}\to \mathbb{R}\cup\{+\infty\}$, where $f\in L^2(\Omega)$, $X\coloneqq W^{1,2}_D(\Omega)$,~and~${Y\coloneqq W^2_N(\textup{div};\Omega)}$,~for~every~${v\in W^{1,2}_D(\Omega)}$ and ${y\in W^2_N(\textup{div};\Omega)}$ with $-\textup{div}\,y=f$ a.e.\ in $\Omega$, we have that 
\begin{align}\label{intro:prager_synge}
\begin{aligned}
\tfrac12 \|\nabla v -\nabla u\|_{L^2(\Omega;\mathbb{R}^d)}^2 + \tfrac12 \|y - z \|_{L^2(\Omega;\mathbb{R}^d)}^2 
= \tfrac12 \|\nabla v-y \|^2_{L^2(\Omega;\mathbb{R}^d)}\,.
\end{aligned}
\end{align}
The equation \eqref{intro:prager_synge} has been used by various authors to define error estimators; for a comprehensive list of references, we refer the reader to \cite{BB20}. 
Often, local procedures are devised to~construct~an~ad-missible vector field
$y\in W^2_N(\textup{div};\Omega)$ with $-\textup{div}\,y=f$ a.e.\ in $\Omega$ from~a~given~function~${v\in W^{1,2}_D(\Omega)}$. While this leads to efficient procedures
to obtain accurate error estimators, the arguments cannot be expected to transfer
to non-linear problems. Another alternative to computing approximations
for the primal and dual problems consists in using finite element methods
for which reconstruction formulas are available, e.g., using the discontinuous Crouzeix--Raviart finite element
method and the Marini formula in the case of the Poisson problem, cf.\ \cite{Mar85}.\enlargethispage{7mm}

It has recently been found (cf.\ \cite{CP20,Bar21}) that the discontinuous Crouzeix--Raviart finite~element method leads to quasi-optimal a priori error estimates for non-linear and non-differentiable~problems, while continuous finite element methods  provide only a sub-optimal
convergence~behavior. In the derivation of those results, a general 
discrete convex duality theory with Raviart--Thomas vector fields has emerged that 
also leads to reconstruction
formulas in rather general settings. As a consequence, given an approximation 
$v\in X$ or $y\in Y$, respectively, the missing one can be obtained via a simple post-processing procedure.
Then, the pair leads to the error representation formula \eqref{intro:error_identity}. It should also 
be noted that neither $v\in X$ nor $y\in Y$ needs to be optimal in a subspace
of $X$ or $Y$. By introducing appropriate residuals, any pair of admissible
approximations of~$u\in X$~and~$z\in Y$ can be used. This is particularly important for non-linear
problems,~i.e.,~non-quadratic functionals, where an exact solution~of~discrete~problems~is~neither~possible~nor~rational. 

A  \hspace{-0.1mm}difficulty \hspace{-0.1mm}in \hspace{-0.1mm}the \hspace{-0.1mm}application \hspace{-0.1mm}of \hspace{-0.1mm}the \hspace{-0.1mm}explicit \hspace{-0.1mm}a \hspace{-0.1mm}posteriori \hspace{-0.1mm}error \hspace{-0.1mm}representation 
\hspace{-0.1mm}formula~\hspace{-0.1mm}\eqref{intro:error_identity}~\hspace{-0.1mm}arises from the condition that $v\in X$ and $y\in Y$ need to be admissible for
the~functionals~\eqref{intro:primal}~and~\eqref{intro:dual}. In the case of the Poisson problem,
this arises, e.g., via element-wise constant approximations of  $f\in L^2(\Omega)$
that are the images of Raviart--Thomas vector fields under~the~divergence~operator. While data terms can be controlled by introducing appropriate 
data oscillation terms, structural peculiarities of the energy densities
$\phi\colon \mathbb{R}^d\to \mathbb{R}\cup\{+\infty\}$ and $\psi\colon \Omega\times \mathbb{R}\to \mathbb{R}\cup\{+\infty\}$ and their (Fenchel) conjugates $\phi^*\colon \mathbb{R}^d\to \mathbb{R}\cup\{+\infty\}$ and $\psi^*\colon \Omega\times \mathbb{R}\to \mathbb{R}\cup\{+\infty\}$~are~often~more~challenging.
We illustrate this 
by analyzing a non-differentiable 
problem
which   leads to a new error analysis and an adaptive refinement procedure
for the computationally~challenging~\mbox{minimization}~problem.

With $\phi = |\cdot|\in C^0(\mathbb{R}^d)$ and $\psi=((x,t)^\top\mapsto \frac{\alpha}{2}(t-g(x))^2)\colon \Omega\times \mathbb{R}\to \mathbb{R}$
 for a given function
$g\in L^2(\Omega)$, i.e., the \textit{noisy image}, and a given parameter $\alpha>0$, i.e., the \textit{fidelity parameter},
the Rudin--Osher--Fatemi (ROF) model, cf.\  \cite{ROF92}, seeks a minimizing function $u\in BV(\Omega)\cap L^2(\Omega)$, i.e., the \textit{de-noised image}, where $BV(\Omega)$ denotes the space of functions with bounded variation, 
for the functional $I\colon BV(\Omega)\cap L^2(\Omega)\to \mathbb{R}$, for every $v\in  BV(\Omega)\cap L^2(\Omega)$ defined by
\begin{align}
I(v) \coloneqq \vert \mathrm{D}v\vert(\Omega) + \tfrac{\alpha}{2} \|v-g\|_{L^2(\Omega)}^2 \,,\label{intro:ROF_primal}
\end{align}
where \hspace{-0.1mm}$\vert \mathrm{D}(\cdot)\vert(\Omega)\colon \hspace{-0.1em}BV(\Omega)\hspace{-0.1em}\to\hspace{-0.1em} [0,+\infty]$ \hspace{-0.1mm}denotes \hspace{-0.1mm}the \hspace{-0.1mm}total \hspace{-0.1mm}variation \hspace{-0.1mm}functional.
\hspace{-0.1mm}The~\hspace{-0.1mm}(Fenchel)~\hspace{-0.1mm}\mbox{(pre-)dual} problem to the minimization of the functional \eqref{intro:ROF_primal} consists in the maximization of 
the functional $D\colon W_N^2(\textup{div};\Omega)\cap L^\infty(\Omega;\mathbb{R}^d)\to \mathbb{R}\cup\{-\infty\}$, for every $y\in W_N^2(\textup{div};\Omega)\cap L^\infty(\Omega;\mathbb{R}^d)$ defined by 
\begin{align}
D(y) \coloneqq -I_{K_1(0)}(y)-\tfrac{1}{2\alpha} \|\mathrm{div}\, y+\alpha g\|_{L^2(\Omega)}^2+\tfrac{\alpha}{2}\| g\|_{L^2(\Omega)}^2\,,\label{intro:ROF_dual}
\end{align}
where 
$I_{K_1(0)}(y)\coloneqq 0$ if $\vert y\vert \leq 1$ a.e.\ in $\Omega$ and $I_{K_1(0)}(y)\coloneqq +\infty$~else.
The primal solution $u\in BV(\Omega)$ $\cap L^2(\Omega)$, i.e., the unique minimizer of \eqref{intro:ROF_primal},  and a dual solution ${z\in W_N^2(\textup{div};\Omega)\cap L^\infty(\Omega;\mathbb{R}^d)}$,~i.e., a (possibly non-unique) maximizer~of~\eqref{intro:ROF_dual}, are 
(formally) related via, cf. \cite[p.\ 284]{CCMN08}, 
\begin{align}
\begin{aligned}
z&\in \left.\begin{cases}
    \big\{\frac{\nabla u}{|\nabla u|}\big\}&\text{ if }|\nabla u|>0\\
    K_1(0)&\text{ if }|\nabla u|=0
\end{cases}\right\}&&\quad\text{ a.e.\ in }\Omega\,,\\
\textup{div}\, z &= \alpha\, (u-g)&&\quad\text{ a.e.\ in }\Omega\,.
\end{aligned}\label{intro:ROF_elg}
\end{align}
The relations \eqref{intro:ROF_elg} determine $z\in W_N^2(\textup{div};\Omega)\cap L^\infty(\Omega;\mathbb{R}^d)$ via $u\in BV(\Omega)\cap L^2(\Omega)$~and~vice~versa. 
A 
Crouzeix--Raviart finite element approximation of \eqref{intro:primal} is given by the minimization of the regularized, discrete functional
$I_{h,\varepsilon}^{cr}\colon \mathcal{S}^{1,cr}(\mathcal{T}_h)\to \mathbb{R}$, $h,\varepsilon>0$, for every $v_h\in \mathcal{S}^{1,cr}(\mathcal{T}_h)$ defined by
\begin{align*}
I_{h,\varepsilon}^{cr}(v_h) \coloneqq \|f_\varepsilon(\vert \nabla_{\!h} v_h\vert )\|_{L^1(\Omega)}
+ \tfrac{\alpha}{2} \|\Pi_h(v_h-g)\|_{L^2(\Omega)}^2 \,.
\end{align*}
Here, $\nabla_{\!h}$ is the element-wise application of the gradient operator
and $f_\varepsilon\!\in\! \smash{C^1(\mathbb{R})}$ is a regularization of the modulus $\vert \cdot\vert$, and $\Pi_h$ denotes
the (local) $L^2$-projection onto element-wise~constant~functions. 
A quasi-optimal dual Raviart--Thomas vector field $z_{h,\varepsilon}^{rt}\in \mathcal{R}T^0_N(\mathcal{T}_h)$ can be associated with a
minimizing function $u_{h,\varepsilon}^{cr}\in \mathcal{S}^{1,cr}(\mathcal{T}_h)$ of $I_{h,\varepsilon}^{cr}\colon \mathcal{S}^{1,cr}(\mathcal{T}_h)\to \mathbb{R}$ via the reconstruction formula
\begin{align}
z_{h,\varepsilon}^{rt} = \tfrac{f_\varepsilon'(\vert \nabla_{\!h} u_{h,\varepsilon}^{cr}\vert) }{\vert \nabla_{\!h} u_{h,\varepsilon}^{cr}\vert}\nabla_{\!h} u_{h,\varepsilon}^{cr}
 + \alpha \tfrac{\Pi_h (u_{h,\varepsilon}^{cr} -g)}{d}\big( \mathrm{id}_{\mathbb{R}^d}- \Pi_h \mathrm{id}_{\mathbb{R}^d}\big)\quad\text{ in }\mathcal{R}T^0_N(\mathcal{T}_h)\,.\label{intro_marini}
\end{align}
For canonical choices of $f_\varepsilon\in C^1(\mathbb{R})$, e.g., 
$f_\varepsilon =\vert \cdot\vert_\varepsilon= ((\cdot)^2+\varepsilon^2)^{1/2}$, it holds $\vert \Pi_h z_{h,\varepsilon}^{rt}\vert\le 1$~a.e.~in~$\Omega$, but not
$|z_{h,\varepsilon}^{rt}|\le 1$ a.e.\ in $\Omega$. Thus, we employ $f_\varepsilon = (1-\varepsilon)\, |\cdot|_\varepsilon$,
so that
$|f_\varepsilon'(t)|\le 1-\varepsilon$~for~all~$t\in \mathbb{R}$. The choice $\varepsilon\sim h^2$ in \eqref{intro_marini} and an additional projection step onto $K_1(0)$
lead to an accurate approximation $\overline z_{h,\varepsilon}^{rt}\in \mathcal{R}T^0_N(\mathcal{T}_h)$ of $z\in W_N^2(\textup{div};\Omega)\cap L^\infty(\Omega;\mathbb{R}^d)$, which 
satisfies $|\overline z_{h,\varepsilon}^{rt}|\le 1$~a.e.~in~$\Omega$ and, thus, represents an admissible test function that leads to the definition
of an error estimator. The resulting adaptive mesh-refinement procedure  leads to significantly
improved experimental convergence rates compared to recent related contributions, cf.~\cite{bartels15,BM20,BTW21}. More precisely, we report quasi-optimal linear convergence rates which have been obtained only~for~meshes~with quadratic grading towards a sufficiently~simple~jump~set~of~a~\mbox{piece-wise}~regular~$g$~in~\cite{BTW21}.\enlargethispage{10mm}

	\textit{This article is organized as follows:} In Section \ref{sec:preliminaries}, we introduce the~employed~notation~and~the relevant finite element spaces. In Section \ref{sec:convex_min}, we propose a general approach for explicit a posteriori error representation for convex minimization problems based on (discrete) convex duality relations. In Section \ref{sec:ROF},
    we transfer the concepts of Section \ref{sec:convex_min} to the Rudin--Osher--Fatemi model and propose a regularization scheme.  In Section \ref{sec:experiments}, we review our theoretical~findings~via~numerical~experiments.
   \newpage
	
	\section{Preliminaries}\label{sec:preliminaries}

    \subsection{Convex analysis}
    
    \hspace{5mm}For a (real) Banach space $X$, which is equipped with the norm $\|\cdot\|_X\colon X\to \mathbb{R}_{\ge 0}$, we denote its corresponding (continuous) dual space by $X^*$ equipped with the dual norm 
	$\|\cdot\|_{X^*}\colon X^*\to \mathbb{R}_{\ge 0}$, defined by $\|x^*\|_{X^*}\coloneqq \sup_{\|x\|_X\leq 1}{\langle x^*,x\rangle_X}$ for every $x^*\in X^*$, where $\langle \cdot,\cdot\rangle_X\colon X^*\times X\to \mathbb{R}$, defined by $\langle x^*,x\rangle_X\coloneqq x^*(x)$ for every $x^*\in X^*$ and $x\in X$, denotes the duality pairing.
	A functional $F\colon X\to \mathbb{R}\cup\{+\infty\}$ is called \textit{sub-differentiable} in $x\in  X$, if $ F(x)<\infty $ and if there exists $x^*\in  X^*$, called  \textit{sub-gradient}, such that for every $ y\in X $, it holds
	\begin{align}
		\langle x^*,y-x\rangle_X\leq F(y)-F(x)\,.\label{eq:subgrad}
	\end{align} 
	The  \textit{sub-differential}  $\partial F\colon X\to  2^{X^*}$ of a functional $F\colon X\to \mathbb{R}\cup\{+\infty\}$ for every $ x\in X$~is~defined~by $(\partial F)(x)\coloneqq \{x^*\in X^*\mid \eqref{eq:subgrad}\text{ holds for }x^*\}$ if $F(x)<\infty$ and $(\partial F)(x)\coloneqq \emptyset$ else. 
    
    For a given functional $F\colon X\to \mathbb{R}\cup\{\pm\infty\}$, we denote its corresponding \textit{(Fenchel)~conjugate}~by $F^*\colon X^*\to \mathbb{R}\cup\{\pm\infty\}$, which for every $x^*\in X^*$ is defined by 
    \begin{align}
        F^*(x^*)\coloneqq \sup_{x\in X}{\langle x^*,x\rangle_X-F(x)}\,.\label{def:fenchel}
    \end{align}
	If $F\colon X\to \mathbb{R}\cup\{+\infty\}$ is a proper, convex, and lower semi-continuous functional, then also~its~(Fen-chel) conjugate $F^*\colon X^*\to\mathbb{R}\cup\{+\infty\}$ is a proper, convex, and lower semi-continuous~functional,  cf.\  \cite[p.\ 17]{ET99}. 
	Furthermore, for every $x^*\in X^*$ and $x\in X$ such that 
	$ F^*(x^*)+F(x)$ is well-defined, i.e., the critical case $\infty-\infty$ does not occur, the \textit{Fenchel--Young inequality}
	\begin{align}
		\langle x^*,x\rangle_X\leq F^*(x^*)+F(x)\label{eq:fenchel_young_ineq}
	\end{align}
	applies. 
	In particular, 
	for every $x^*\in X^*$ and $x\in X$, it holds the \textit{Fenchel--Young identity}
	\begin{align}
		x^*\in (\partial F)(x)\quad\Leftrightarrow \quad	\langle x^*,x\rangle_X= F^*(x^*)+F(x)\,.\label{eq:fenchel_young_id}
	\end{align}
    \hspace{5mm}The following convexity measures for functionals play an important role in the derivation~of an explicit a posteriori error representation for convex minimization problems in Section \ref{sec:convex_min}; for further information, please refer to  \cite{bregman67,NSV00,OBGXY05,bartels15}.

  \begin{definition}[Br\'egman distance and symmetric Br\'egman distance]\label{def:convexity_measure}
        Let $X$ be a (real) Banach space and $F\colon X\to \mathbb{R}\cup\{+\infty\}$ proper, i.e., $D(F)\coloneqq \{x\in X\mid F(x)<\infty\}\neq \emptyset$.
        \begin{itemize}[noitemsep,topsep=2pt,leftmargin=!,labelwidth=\widthof{(ii)}]
            \item[(i)] The \textup{Br\'egman distance} $\sigma^2_F\colon 
        D(F)\times X\to [0,+\infty]$ for every $x\in D(F)$~and~$y\in X$~is~defined~by
        \begin{align*}
            \sigma^2_F(y,x)\coloneqq F(y)-F(x)-\sup_{x^*\in (\partial F)(x)}{\langle x^*,y-x\rangle_X}\,,
        \end{align*}
        where we use the convention $\sup(\emptyset)\coloneqq-\infty$.
        \item[(ii)] The \textup{Br\'egman distance} $\sigma^2_F\colon 
        D(F)^2\to  [0,+\infty]$ for every $x,y\in D(F)$~is~defined~by
        \begin{align*}
            \sigma_{F,s}^2(y,x)\coloneqq \sigma_F^2(y,x)+\sigma_F^2(x,y)=\inf_{x^*\in (\partial F)(x);y^*\in (\partial F)(y)}{\langle x^*-y^*,x-y\rangle_X}\,,
        \end{align*}
        where we use the convention $\inf(\emptyset)\coloneqq +\infty$.
        \end{itemize}
 \end{definition}

    \begin{definition}[Optimal convexity measure at a minimizer]\label{def:convexity_measure_optimal}
        Let $X$ be a (real) Banach~space~and $F\colon X\to \mathbb{R}\cup\{+\infty\}$ proper. Moreover, let $x\in X$ be minimal for $F\colon X\to \mathbb{R}\cup\{+\infty\}$.~Then,~the \textup{optimal convexity measure} $\rho^2_F\colon 
       X^2\to [0,+\infty]$ \textup{at} $x\in X$ for every $y\in X$~is~defined~by
        \begin{align*}
            \rho^2_F(y,x)\coloneqq F(y)-F(x)\ge 0 \,.
        \end{align*}
 \end{definition}

    \begin{remark}\label{rem:convexity_measure_optimal}
     Let $X$ be a (real) Banach space and $F\colon X\to \mathbb{R}\cup\{+\infty\}$ proper. Moreover, let $x\in X$ be minimal for $F\colon X\to \mathbb{R}\cup\{+\infty\}$. Then, due to $0\in (\partial F)(x)$, for every $y\in X$, it holds
        \begin{align*}
            \sigma^2_F(y,x)\le  \rho^2_F(y,x)\,.
        \end{align*}
    \end{remark}

    \subsection{Function spaces}
    
	\hspace{5mm}Throughout the article,  we denote by ${\Omega \subseteq \mathbb{R}^d}$,~${d \in \mathbb{N}}$, a bounded polyhedral Lipschitz domain, whose (topological) boundary is disjointly divided~into~a~closed~Dirichlet part $\Gamma_D$ and an open Neumann part $\Gamma_N$, i.e., ${\partial\Omega = \Gamma_D\cup\Gamma_N}$~and~${\emptyset = \Gamma_D\cap\Gamma_N}$. \enlargethispage{3mm}

	For $p\in \left[1,\infty\right]$ and $l\in \mathbb{N}$, we employ the standard notations\footnote{Here, $W^{\smash{-\frac{1}{p},p}}(\Gamma_N)\coloneqq  (W^{\smash{1-\frac{1}{p'},p'}}(\Gamma_N))^*$ and $W^{\smash{-\frac{1}{p},p}}(\partial\Omega)\coloneqq  (W^{\smash{1-\frac{1}{p'},p'}}(\partial\Omega))^*$.}
	\begin{align*}
		\begin{aligned}
		W^{1,p}_D(\Omega;\mathbb{R}^l)&\coloneqq  \big\{v\in L^p(\Omega;\mathbb{R}^l)&&\hspace*{-3.25mm}\mid \nabla v\in L^p(\Omega;\mathbb{R}^{l\times d}),\, \textup{tr}\,v=0\text{ in }L^p(\Gamma_D;\mathbb{R}^l)\big\}\,,\\
		W^{p}_N(\textup{div};\Omega)&\coloneqq  \big\{y\in L^p(\Omega;\mathbb{R}^d)&&\hspace*{-3.25mm}\mid \textup{div}\,y\in L^p(\Omega),\,\textup{tr}_n\,y=0\text{ in }W^{-\frac{1}{p},p}(\Gamma_N)\big\}\,,
	\end{aligned}
	\end{align*}
	$\smash{W^{1,p}(\Omega;\mathbb{R}^l)\coloneqq  W^{1,p}_D(\Omega;\mathbb{R}^l)}$ if $\smash{\Gamma_D=\emptyset}$, and $\smash{W^{p}(\textup{div};\Omega)\coloneqq  W^{p}_N(\textup{div};\Omega)}$ if $\smash{\Gamma_N=\emptyset}$,
	where~we~\mbox{denote} by $\textup{tr}\colon\smash{W^{1,p}(\Omega;\mathbb{R}^l)}\hspace{-0.05em} \to \hspace{-0.05em}\smash{L^p(\partial\Omega;\mathbb{R}^l)}$ and by $
	\textup{tr}_n(\cdot)\colon\smash{W^p(\textup{div};\Omega)}\hspace{-0.05em} \to \hspace{-0.05em}\smash{W^{-\frac{1}{p},p}(\partial\Omega)}$, the trace~and~\mbox{normal} trace operator, respectively. In particular, we always omit $\textup{tr}(\cdot)$ and $\textup{tr}_n(\cdot)$. In addition, we employ the abbreviations $L^p(\Omega)\hspace{-0.05em} \coloneqq\hspace{-0.05em}  L^p(\Omega;\mathbb{R}^1)$, ${W^{1,p}(\Omega)\hspace{-0.05em}\coloneqq \hspace{-0.05em} W^{1,p}(\Omega;\mathbb{R}^1)}$,~and~${W^{1,p}_D(\Omega)\hspace{-0.05em}\coloneqq \hspace{-0.05em} W^{1,p}_D(\Omega;\mathbb{R}^1)}$. For  (Lebesgue) measurable functions $u,v\colon \hspace{-0.1em}\Omega\hspace{-0.1em}\to\hspace{-0.1em} \mathbb{R}$ and a (Lebesgue) measurable set ${M\hspace{-0.1em}\subseteq \hspace{-0.1em}\Omega}$,~we~write
    \begin{align*}
        (u,v)_{M}\coloneqq \int_{M}{u\,v\,\mathrm{d}x}\,,
    \end{align*}
    whenever the right-hand side is well-defined. Analogously, for  (Lebesgue) measurable~vector~fields $z,y\colon \Omega\to \mathbb{R}^d$ and a (Lebesgue) measurable set $M\subseteq\Omega$, we  write ${(z,y)_{M}\coloneqq \int_{M}{z\cdot y\,\mathrm{d}x}}$. Moreover, 
    let $\vert \textup{D}(\cdot)\vert(\Omega) \colon L^1_{\textup{loc}}(\Omega) \to \mathbb{R}\cup\{+\infty\}$, for every $v\in L^1_{\textup{loc}}(\Omega)$ defined by\footnote{Here, $C_c^\infty(\Omega;\mathbb{R}^d)$ denotes the space of smooth and in $\Omega$ compactly supported vector fields.} 
	\begin{align*}
		\vert \textup{D} {v}\vert(\Omega)\coloneqq  \sup\big\{-(v,\textup{div}\,\phi)_{\Omega}\mid \phi\in C_c^\infty(\Omega;\mathbb{R}^d);
  \|\phi\|_{L^\infty(\Omega;\mathbb{R}^d)}\leq 1\big\} \,,
	\end{align*}
	denote \hspace{-0.1mm}the \hspace{-0.1mm}\textit{total \hspace{-0.1mm}variation} \hspace{-0.1mm}functional. \hspace{-0.1mm}Then, \hspace{-0.1mm}the \hspace{-0.1mm}\textit{space \hspace{-0.1mm}of \hspace{-0.1mm}functions \hspace{-0.1mm}with \hspace{-0.1mm}bounded \hspace{-0.1mm}variation} \hspace{-0.1mm}is \hspace{-0.1mm}defined by
    \begin{align*}
        BV(\Omega)\coloneqq  \big\{v\in L^1(\Omega)\mid \vert \textup{D}v\vert(\Omega)<\infty\big\}\,.
    \end{align*}

	\subsection{Triangulations}

 \hspace{5mm}Throughout the entire paper, we denote by $\{\mathcal{T}_h\}_{h>0}$, a family~of~regular, i.e., uniformly shape regular and conforming, triangulations of $\Omega\subseteq \mathbb{R}^d$, $d\in\mathbb{N}$, cf.\  \!\cite{EG21}. 
	Here,~${h>0}$~refers to the \textit{average mesh-size}, i.e., if we set $h_T\coloneqq  \textup{diam}(T)$ for all $T\in \mathcal{T}_h$,~then,~we~have~that~${h 
= \frac{1}{\textup{card}(\mathcal{T}_h)}\sum_{T\in \mathcal{T}_h}{h_T}
 }$.
	For every element $T \in \mathcal{T}_h$,
 we denote by $\rho_T>0$, the supremum of diameters of~inscribed~balls. We assume that there exists a constant $\omega_0>0$, independent of $h>0$, such that $\max_{T\in \mathcal{T}_h}{h_T}{\rho_T^{-1}}\le
 \omega_0$. The smallest such constant is called the \textit{chunkiness} of $\{\mathcal{T}_h\}_{h>0}$.  The~sets~$\mathcal{S}_h$, $\mathcal{S}_h^{i}$, $\mathcal{S}_h^{\partial}$, and $\mathcal{N}_h$  contain the sides, interior sides, boundary sides, and vertices, respectively, of the elements of $\mathcal{T}_h$. 
 We have the following relation between the average mesh-size and the number of vertices:
 \begin{align*}
     h\sim \textup{card}(\mathcal{N}_h)^{-1/d}\,.
 \end{align*}
	
	For $k\in \mathbb{N}\cup\{0\}$ and $T\in \mathcal{T}_h$, let $\mathcal{P}_k(T)$ denote the set of polynomials of maximal~degree~$k$~on~$T$. Then, for $k\in \mathbb{N}\cup\{0\}$~and $l\in \mathbb{N}$, the sets of continuous and~\mbox{element-wise}~polynomial functions or vector~fields,~respectively, are defined by
	\begin{align*}
    \mathcal{L}^k(\mathcal{T}_h)^l&\coloneqq  \big\{v_h\in L^\infty(\Omega;\mathbb{R}^l)\mid v_h|_T\in\mathcal{P}_k(T)^l\text{ for all }T\in \mathcal{T}_h\big\}\,,\\
	\mathcal{S}^k(\mathcal{T}_h)^l&\coloneqq  	\mathcal{L}^k(\mathcal{T}_h)^l\cap C^0(\overline{\Omega};\mathbb{R}^l)\,.
	\end{align*}
	For every $T\in \mathcal{T}_h$ and $S\in \mathcal{S}_h$, let $\smash{x_T\coloneqq  \frac{1}{d+1}\sum_{z\in \mathcal{N}_h\cap T}{z}\in T}$ and $\smash{x_S\coloneqq  \frac{1}{d}\sum_{z\in \mathcal{N}_h\cap S}{z}\in S}$ denote the barycenters of $T$ and $S$, respectively. The (local) $L^2$-projection operator $\Pi_h\colon L^1(\Omega;\mathbb{R}^l)\to \mathcal{L}^0(\mathcal{T}_h)^l$ onto element-wise constant functions or vector  fields, respectively, for every 
    $v\in L^1(\Omega) $, is defined by $\Pi_h v|_T\coloneqq \fint_T{v\,\mathrm{d}x}$ for all $T\in \mathcal{T}_h$. 
 The element-wise~gradient 
 $\nabla_{\!h}\colon \hspace{-0.1em}\mathcal{L}^1(\mathcal{T}_h)^l\hspace{-0.1em}\to\hspace{-0.1em} \mathcal{L}^0(\mathcal{T}_h)^{l\times d}$, for every $v_h\hspace{-0.1em}\in\hspace{-0.1em} \mathcal{L}^1(\mathcal{T}_h)^l$,~is~defined~by $\nabla_{\!h}v_h|_T\hspace{-0.1em}\coloneqq  \hspace{-0.1em}\nabla(v_h|_T)$ for~all~${T\hspace{-0.1em}\in\hspace{-0.1em} \mathcal{T}_h}$.
 	
	\subsubsection{Crouzeix--Raviart element}\enlargethispage{11mm}
	
	\qquad The Crouzeix--Raviart finite element space, cf.\ \cite{CR73}, consists of \mbox{element-wise} affine functions that are continuous at the barycenters of inner element sides, i.e.,\footnote{Here, for every inner side $S\in\mathcal{S}_h^{i}$, $\jump{v_h}_S\coloneqq  v_h|_{T_+}-v_h|_{T_-}$ on $S$, where $T_+, T_-\in \mathcal{T}_h$ satisfy $\partial T_+\cap\partial T_-=S$, and for every boundary $S\in\mathcal{S}_h^{\partial}$, $\jump{v_h}_S\coloneqq  v_h|_T$ on $S$, where $T\in \mathcal{T}_h$ satisfies $S\subseteq \partial T$.}
	\begin{align*}\mathcal{S}^{1,\textit{cr}}(\mathcal{T}_h)\coloneqq  \big\{v_h\in \mathcal{L}^1(\mathcal{T}_h)\mid \jump{v_h}_S(x_S)=0\text{ for all }S\in \mathcal{S}_h^{i}\big\}\,.
	\end{align*}
    Note that $\mathcal{S}^{1,\textit{cr}}(\mathcal{T}_h)\subseteq BV(\Omega)$. More precisely, for every $v_h\in \mathcal{S}^{1,\textit{cr}}(\mathcal{T}_h)$, cf. \cite[Theorem 1.63]{braides98}, we have that $\mathrm{D}v_h=\nabla_{\! h}v_h\otimes \mathrm{d}x+\jump{v_h}\otimes \mathrm{d}s|_{\mathcal{S}_h}$ with $\nabla_{\! h}v_h\otimes \mathrm{d}x\perp \jump{v_h}\otimes \mathrm{d}s|_{\mathcal{S}_h}$, so that, cf.\ \cite{BBHSVN23}, 
    \begin{align}\label{eq:total_variation_cr}
        \vert \mathrm{D}v_h\vert(\Omega)= \|\nabla_{\! h}v_h\|_{L^1(\Omega;\mathbb{R}^d)}+\|\jump{v_h}\|_{L^1(\mathcal{S}_h)}\,.
    \end{align}
	The Crouzeix--Raviart finite element space with homogeneous Dirichlet boundary condition~on~$\Gamma_D$ is defined by
	\begin{align*}
			\smash{\mathcal{S}^{1,\textit{cr}}_D(\mathcal{T}_h)}\coloneqq  \big\{v_h\in\smash{\mathcal{S}^{1,\textit{cr}}(\mathcal{T}_h)}\mid v_h(x_S)=0\text{ for all }S\in \mathcal{S}_h\cap \Gamma_D\big\}\,.
	\end{align*}
    A basis for  $\smash{\mathcal{S}^{1,\textit{cr}}(\mathcal{T}_h)}$ is given by functions $\varphi_S\hspace{-0.1em}\in\hspace{-0.1em} \smash{\mathcal{S}^{1,\textit{cr}}(\mathcal{T}_h)}$, $S\hspace{-0.1em}\in\hspace{-0.1em} \mathcal{S}_h$, satisfying~the~\mbox{Kronecker}~\mbox{property} $\varphi_S(x_{S'})=\delta_{S,S'}$ for all $S,S'\in \mathcal{S}_h$. A basis for  $\smash{\smash{\mathcal{S}^{1,\textit{cr}}_D(\mathcal{T}_h)}}$~is~given~by~${\varphi_S\in \smash{\mathcal{S}^{1,\textit{cr}}_D(\mathcal{T}_h)}}$,~${S\in \mathcal{S}_h\setminus\Gamma_D}$.

	\subsubsection{Raviart--Thomas element}
 
	\qquad The  Raviart--Thomas finite element space, cf.\ \cite{RT75},~consists~of~element-wise \hspace{-0.2mm}affine~\hspace{-0.2mm}vector~\hspace{-0.2mm}fields \hspace{-0.2mm}that \hspace{-0.2mm}have \hspace{-0.2mm}continuous \hspace{-0.2mm}constant \hspace{-0.2mm}normal \hspace{-0.2mm}components \hspace{-0.2mm}on \hspace{-0.2mm}inner~\hspace{-0.2mm}element~\hspace{-0.2mm}sides,~\hspace{-0.2mm}i.e.,\!\footnote{Here, for every inner side $S\in\mathcal{S}_h^{i}$, $\jump{y_h\cdot n}_S\coloneqq  \smash{y_h|_{T_+}\cdot n_{T_+}+y_h|_{T_-}\cdot n_{T_-}}$ on $S$, where $T_+, T_-\in \mathcal{T}_h$ satisfy $\smash{\partial T_+\cap\partial T_-=S}$ and for every $T\in \mathcal{T}_h$, $\smash{n_T\colon\partial T\to \mathbb{S}^{d-1}}$ denotes the outward unit normal vector field~to~$ T$, 
	and for every boundary side $\smash{S\in\mathcal{S}_h^{\partial}}$, $\smash{\jump{y_h\cdot n}_S\coloneqq  \smash{y_h|_T\cdot n}}$ on $S$, where $T\in \mathcal{T}_h$ satisfies $S\subseteq \partial T$ and $\smash{n\colon\partial\Omega\to \mathbb{S}^{d-1}}$ denotes the outward unit normal vector field to $\Omega$.}
	\begin{align*}
  \mathcal{R}T^0(\mathcal{T}_h)\coloneqq  \big\{y_h\in \mathcal{L}^1(\mathcal{T}_h)^d\mid &\,\smash{y_h|_T\cdot  n_T=\textup{const}\text{ on }\partial T\text{ for all }T\in \mathcal{T}_h\,,}\\ 
  &\smash{	\jump{y_h\cdot n}_S=0\text{ on }S\text{ for all }S\in \mathcal{S}_h^{i}\big\}\,.}
	\end{align*}
    Note that $\mathcal{R}T^{0}_N(\mathcal{T}_h)\subseteq W^\infty_N(\textup{div};\Omega)$. 
	The Raviart--Thomas finite element space with homogeneous normal component boundary condition on  $\Gamma_N$ is defined by
	\begin{align*}
		\smash{\mathcal{R}T^{0}_N(\mathcal{T}_h)}\coloneqq  \big\{y_h\in	\mathcal{R}T^0(\mathcal{T}_h)\mid y_h\cdot n=0\text{ on }\Gamma_N\big\}\,.
	\end{align*}
    A basis for  $\mathcal{R}T^0(\mathcal{T}_h)$ is given~by~vector fields $\psi_S\hspace{-0.1em}\in\hspace{-0.1em}  \mathcal{R}T^0(\mathcal{T}_h)$,  $S\hspace{-0.1em}\in\hspace{-0.1em} \mathcal{S}_h$,~satisfying~\mbox{Kronecker}~\mbox{property} $\psi_S|_{S'}\cdot n_{S'}=\delta_{S,S'}$ on $S'$ for all $S'\in \mathcal{S}_h$, where $n_S$  is the unit normal vector on $S$ pointing from $T_-$ to $T_+$ if $T_+\cap T_-=S\in \mathcal{S}_h$. A basis for $\smash{\mathcal{R}T^{0}_N(\mathcal{T}_h)}$ is given by $\psi_S\in \smash{\mathcal{R}T^{0}_N(\mathcal{T}_h)}$, ${S\in \mathcal{S}_h\setminus\Gamma_N}$.

	\subsubsection{Discrete integration-by-parts formula}
 
 	\qquad For every $v_h\in \mathcal{S}^{1,\textit{cr}}_D(\mathcal{T}_h)$ and ${y_h\in \mathcal{R}T^0_N(\mathcal{T}_h)}$, it holds the \textit{discrete integration-by-parts~formula}
	\begin{align}
		(\nabla_{\!h}v_h,\Pi_h y_h)_\Omega=-(\Pi_h v_h,\,\textup{div}\,y_h)_\Omega\,.\label{eq:pi0}
	\end{align}
 In~addition, cf.\ \cite[Section 2.4]{BW21}, 
 if a vector field $y_h\in \mathcal{L}^0(\mathcal{T}_h)^d$ satisfies for every $v_h\in \smash{\mathcal{S}^{1,cr}_D(\mathcal{T}_h)}$
 \begin{align*}
     (y_h,\nabla_{\!h} v_h)_{\Omega}=0\,,
 \end{align*}
 then, choosing $v_h=\varphi_S\in\mathcal{S}^{1,cr}_D(\mathcal{T}_h) $ for all $S\in \mathcal{S}_h\setminus \Gamma_D$, one finds that $y_h\in \mathcal{R}T^0_N(\mathcal{T}_h)$.
 Similarly, if a function $v_h\in \mathcal{L}^0(\mathcal{T}_h)$ satisfies for every $y_h\in \mathcal{R}T^0_N(\mathcal{T}_h)$
 \begin{align*}
     (v_h,\textup{div}\,y_h)_{\Omega}=0\,,
 \end{align*}
 then, choosing $y_h\hspace{-0.1em}=\hspace{-0.1em}\psi_S\hspace{-0.1em}\in\hspace{-0.1em}\mathcal{R}T^0_N(\mathcal{T}_h) $ for all $S\hspace{-0.1em}\in\hspace{-0.1em} \mathcal{S}_h\setminus \Gamma_N$, one finds that $v_h\hspace{-0.1em}\in\hspace{-0.1em} \mathcal{S}^{1,cr}_D(\mathcal{T}_h)$.~In~other~words, 
 we have the orthogonal (with respect to the inner product $(\cdot,\cdot)_{\Omega}$)  decompositions 
    \begin{align}
    \mathcal{L}^0(\mathcal{T}_h)^d&=\textup{ker}(\textup{div}|_{\smash{\mathcal{R}T^0_N(\mathcal{T}_h)}})\oplus \nabla_{\!h}(\mathcal{S}^{1,\textit{\textrm{cr}}}_D(\mathcal{T}_h))
       \,,\label{eq:decomposition.2}\\
        \mathcal{L}^0(\mathcal{T}_h)&=\textup{ker}(\nabla_{\!h}|_{\smash{\mathcal{S}^{1,cr}_D(\mathcal{T}_h)}})\oplus \textup{div}\,(\mathcal{R}T^0_N(\mathcal{T}_h)) \,.\label{eq:decomposition.1}
    \end{align}
	
	\section{Exact a posteriori error estimation for convex minimization problems} \label{sec:convex_min}

    \subsection{Continuous convex minimization problem and continuous convex duality}
	
    \hspace{5mm}Let $\phi\colon \mathbb{R}^d\to \mathbb{R}\cup\{+\infty\}$ be a proper, convex, and lower semi-continuous~function and let $\psi\colon \Omega\times\mathbb{R}\to \mathbb{R}\cup\{+\infty\}$ be a (Lebesgue) measurable function such that for a.e.\ $x\in \Omega$,~the~function $\psi(x,\cdot)\colon\Omega\times\mathbb{R}\to \mathbb{R}\cup\{+\infty\}$ is proper, convex, and lower semi-continuous. We~examine~the~convex minimization problem that seeks for a function $u\in W^{1,p}_D(\Omega) $, $p\in (1,\infty)$, that is~minimal for the functional $I\colon W^{1,p}_D(\Omega)\to \mathbb{R}\cup\{+\infty\}$, for every $v\in \smash{W^{1,p}_D(\Omega)}$ defined by
	\begin{align}
		I(v)\coloneqq \int_{\Omega}{\phi(\nabla v)\,\textup{d}x}+\int_{\Omega}{\psi(\cdot,v)\,\textup{d}x}\,.\label{primal}
	\end{align}
    In what follows, we refer to the minimization of $I \colon W^{1,p}_D(\Omega) \to\mathbb{R} \cup \{+\infty\}$ as the \textit{primal problem}.
	A \textit{(Fenchel) dual problem} to the minimization of \eqref{primal} consists in the maximization~of~the~functional $D\colon\smash{L^{p'}(\Omega;\mathbb{R}^d)}\to \mathbb{R} \cup \{ -\infty \}$, for every $y\in L^{p'}(\Omega;\mathbb{R}^d)$ defined by
	\begin{align}
		D(y)\coloneqq -\int_{\Omega}{\phi^*( y)\,\textup{d}x}-F^*(\textup{Div}\,y)\,,\label{dual}
	\end{align}
	where the distributional divergence $\textup{Div}\colon L^{p'}(\Omega;\mathbb{R}^d)\to (W^{1,p}_D(\Omega))^*$~for~every~$y\in \smash{L^{p'}(\Omega;\mathbb{R}^d)}$~and $v\hspace{-0.1em}\in\hspace{-0.1em} \smash{W^{1,p}_D(\Omega)}$ is defined by $\langle \textup{Div}\,y,v\rangle_{\smash{W^{1,p}_D(\Omega)}}\hspace{-0.1em}\coloneqq\hspace{-0.1em} -(y,\nabla v)_{\Omega}$ and
	$\smash{F^*\colon \hspace{-0.1em}L^{p'}(\Omega)\hspace{-0.1em}\to\hspace{-0.1em} \mathbb{R}\hspace{-0.1em}\cup\hspace{-0.1em}\{\pm\infty \}}$~denotes~the Fenchel conjugate to $F\colon L^p(\Omega)\to \mathbb{R}\cup\{+\infty\}$, defined by $F(v)\coloneqq \int_{\Omega}{\psi(\cdot,v)\,\textup{d}x}$~for~all~${ v\in L^p(\Omega)}$. Note~that~for~every ${y\hspace{-0.1em}\in \hspace{-0.1em}\smash{W^{p'}_N(\textup{div};\Omega)}}$, we have that $\langle \textup{Div}\,y,v\rangle_{\smash{W^{1,p}_D(\Omega)}}\hspace{-0.1em}=\hspace{-0.1em}(\textup{div}\,y, v)_{\Omega}$~for~all~${v\hspace{-0.1em}\in\hspace{-0.1em} W^{1,p}_D(\Omega)}$ and, thus, the representation
	\begin{align}
	D(y)=-\int_{\Omega}{\phi^*( y)\,\textup{d}x}-\int_{\Omega}{\psi^*(\cdot,\textup{div}\,y)\,\textup{d}x}\,.\label{eq:explicit_representation}
	\end{align}
    A \textit{weak duality relation} applies, cf.\  \cite[Proposition 1.1, {p.\ 48}]{ET99}, i.e.,
	\begin{align}
		\inf_{v\in W^{1,p}_D(\Omega)}{I(v)}\ge \sup_{y\in L^{p'}(\Omega;\mathbb{R}^d)}{D(y)}\,.\label{weak_duality}
	\end{align}
    In what follows, we
	always assume that $\phi\colon \mathbb{R}^d\to \mathbb{R}\cup\{+\infty\}$ and $\psi\colon \Omega\times\mathbb{R}\to \mathbb{R}\cup\{+\infty\}$ are such that \eqref{primal} admits at least one minimizer $u\in W^{1,p}_D(\Omega) $, called the \textit{primal solution},  \eqref{dual} at least one maximizer $z\in L^{p'}(\Omega;\mathbb{R}^d) $, called the \textit{dual solution}, and that a \textit{strong~duality~relation}~applies,~i.e.,
	\begin{align}
		I(u)= D(z)\,.\label{strong_duality}
	\end{align}
	By the Fenchel--Young inequality (cf.\  \eqref{eq:fenchel_young_ineq}),  \eqref{strong_duality} is equivalent to 
	the \textit{convex~optimality~relations}
	\begin{align}
        z\cdot\nabla u&=\phi^*(z)+\phi(\nabla u)\quad\textup{ a.e. in }\Omega\,,\label{optimality_relations.1}\\
        \textup{Div}\,z&\in \partial F(u)\,. \label{optimality_relations.2}
	\end{align}
    If $z\in \smash{W^{p'}_N(\textup{div};\Omega)}$, then the convex optimality relation \eqref{optimality_relations.2} is equivalent to 
    \begin{align}
        \textup{div}\,z\, u=\psi^*(\cdot,\textup{div}\,z)+\psi(\cdot, u)\quad\textup{ a.e. in }\Omega\,. 	\label{optimality_relations.3}
    \end{align}
    If $\phi\in C^1(\mathbb{R}^d)$, 
	then, by the Fenchel--Young identity~(cf.~\eqref{eq:fenchel_young_id}), \eqref{optimality_relations.1} is equivalent to 
	\begin{align}
	z= D\phi(\nabla u)\quad\textup{ in }\smash{L^{p'}(\Omega;\mathbb{R}^d)}\,.\label{optimality_relations.4}
	\end{align}
	Similarly, if $z\in \smash{W^{p'}_N(\textup{div};\Omega)}$ and 
	 $\psi(x,\cdot)\in C^1(\mathbb{R})$ for a.e.\ $x\in \Omega$, 
	 then \eqref{optimality_relations.3} is equivalent to 
	\begin{align}
	\textup{div}\,z=D\psi(\cdot, u)\quad\textup{ in }\smash{L^{p'}(\Omega)}\,.\label{optimality_relations.5}
	\end{align}

 The convex duality relations \eqref{weak_duality}--\eqref{optimality_relations.5} motivate introducing the \textit{primal-dual error estimator} $\eta^2\colon W^{1,p}_D(\Omega)\times L^{p'}(\Omega;\mathbb{R}^d)\to [0,+\infty]$, for every 
 $v\in W^{1,p}_D(\Omega)$ and $y\in L^{p'}(\Omega;\mathbb{R}^d)$ defined by\vspace{-1mm}\enlargethispage{5mm}
 \begin{align}
  \eta^2(v,y)\coloneqq I(v)-D(y)\,.\label{def:eta}
 \end{align}
Note that the sign of the estimator \eqref{def:eta} is a consequence of the weak duality relation \eqref{weak_duality}.

 Together with the optimal convexity measures (cf.\ Definition \ref{def:convexity_measure_optimal}) $\rho_I^2\colon W^{1,p}_D(\Omega)^2\to [0,+\infty]$ of \eqref{primal} at a primal solution $u\in W^{1,p}_D(\Omega)$ and $\rho_{-D}^2\colon \smash{L^{p'}(\Omega;\mathbb{R}^d)}\to [0,+\infty]$ of the negative of \eqref{dual} at a dual solution $z\in \smash{L^{p'}(\Omega;\mathbb{R}^d)}$, we arrive at the following explicit~a~posteriori~error~representation.\enlargethispage{3mm}
  
 \begin{theorem}[Explicit  (a posteriori) error representation]\label{thm:main}
 The  following statements apply:
 \begin{itemize}[noitemsep,topsep=2pt,leftmargin=!,labelwidth=\widthof{(ii)}]
     \item[(i)] For every $v\in W^{1,p}_D(\Omega)$ and $y\in  \smash{L^{p'}(\Omega;\mathbb{R}^d)}$, we have that
  \begin{align*}
        \smash{\rho^2_I(v,u)+\rho^2_{-D}(y,z)=\eta^2(v,y)}\,.
  \end{align*}
    \item[(ii)] For every $v\in \smash{ W^{1,p}_D(\Omega)}$ and $y\in  \smash{W^{p'}_N(\textup{div};\Omega)}$, we have that 
  \begin{align}\label{eta_explicit_representation}
      \eta^2(v,y)&= \int_{\Omega}{\phi(\nabla v)-\nabla v\cdot y+\phi^*(y)\,\mathrm{d}x}+\int_{\Omega}{\psi(\cdot, v)- v\,\mathrm{div}\,y+\psi^*(\cdot,\mathrm{div}\,y)\,\mathrm{d}x}\,.
  \end{align}
 \end{itemize}
 \end{theorem}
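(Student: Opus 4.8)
The plan is to prove both statements by direct computation from the definitions, with strong duality carrying part (i) and the continuous integration-by-parts formula carrying part (ii).

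For part (i), I would simply unfold the optimal convexity measures of Definition \ref{def:convexity_measure_optimal}. Since $u$ minimizes $I$ and $z$ maximizes $D$ (equivalently, $z$ minimizes $-D$), we have $\rho_I^2(v,u)=I(v)-I(u)$ and $\rho_{-D}^2(y,z)=(-D)(y)-(-D)(z)=D(z)-D(y)$. Adding these two identities and invoking the strong duality relation $I(u)=D(z)$ from \eqref{strong_duality}, the contributions $-I(u)$ and $D(z)$ cancel, leaving $\rho_I^2(v,u)+\rho_{-D}^2(y,z)=I(v)-D(y)$, which is exactly $\eta^2(v,y)$ by \eqref{def:eta}. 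No structural hypotheses on $\phi$ or $\psi$ beyond those already assumed are needed here; the sole ingredient is strong duality.

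For part (ii), I would start from $\eta^2(v,y)=I(v)-D(y)$ and expand each piece. The term $I(v)$ is given explicitly by \eqref{primal}. The essential point is that, because $y\in W^{p'}_N(\textup{div};\Omega)$, the abstract dual functional collapses to the explicit representation \eqref{eq:explicit_representation}, so that $-D(y)=\int_{\Omega}{\phi^*(y)\,\mathrm{d}x}+\int_{\Omega}{\psi^*(\cdot,\textup{div}\,y)\,\mathrm{d}x}$. Summing, $\eta^2(v,y)$ becomes the sum of the four integrals of $\phi(\nabla v)$, $\psi(\cdot,v)$, $\phi^*(y)$, and $\psi^*(\cdot,\textup{div}\,y)$. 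It then remains to insert the two cross terms $-\nabla v\cdot y$ and $-v\,\textup{div}\,y$, which together integrate to zero: combining the definition $\langle\textup{Div}\,y,v\rangle_{W^{1,p}_D(\Omega)}=-(y,\nabla v)_\Omega$ with the identity $\langle\textup{Div}\,y,v\rangle_{W^{1,p}_D(\Omega)}=(\textup{div}\,y,v)_\Omega$ valid for $y\in W^{p'}_N(\textup{div};\Omega)$ (noted after \eqref{dual}) yields $(y,\nabla v)_\Omega+(v,\textup{div}\,y)_\Omega=0$. Adding this vanishing quantity and regrouping the integrands into a $\phi$-block and a $\psi$-block gives the claimed formula \eqref{eta_explicit_representation}.

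The argument is almost entirely bookkeeping, so there is no serious analytic obstacle. The one point that genuinely requires the stated function-space memberships is the integration-by-parts step: the boundary contributions vanish precisely because the trace of $v\in W^{1,p}_D(\Omega)$ vanishes on $\Gamma_D$ and the normal trace of $y\in W^{p'}_N(\textup{div};\Omega)$ vanishes on $\Gamma_N$, which is exactly the content of the identification $\langle\textup{Div}\,y,v\rangle_{W^{1,p}_D(\Omega)}=(\textup{div}\,y,v)_\Omega$. Once this duality pairing is recognized as the genuine $L^{p'}$--$L^p$ pairing of $\textup{div}\,y$ against $v$, every integrand in \eqref{eta_explicit_representation} is pointwise nonnegative by the Fenchel--Young inequality \eqref{eq:fenchel_young_ineq}, which re-confirms the sign $\eta^2(v,y)\ge 0$ already implied by part (i).
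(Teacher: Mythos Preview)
Your proof is correct and follows essentially the same approach as the paper: part (i) is the one-line cancellation via strong duality and the definitions of $\rho_I^2$, $\rho_{-D}^2$, and $\eta^2$, and part (ii) combines \eqref{primal}, the explicit representation \eqref{eq:explicit_representation}, and integration-by-parts.
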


 \begin{remark}
     \begin{itemize}[noitemsep,topsep=2pt,leftmargin=!,labelwidth=\widthof{(ii)}]
     \item[(i)] By the Fenchel--Young inequality \eqref{eq:fenchel_young_ineq}, the integrands in the representation \eqref{eta_explicit_representation}, are non-negative and, thus, suitable as local refinement indicators.

     \item[(ii)] Appealing to Remark \ref{rem:convexity_measure_optimal}, from Theorem \ref{thm:main} (i), for every $v\in W^{1,p}_D(\Omega)$ and $y\in L^{p'}(\Omega;\mathbb{R}^d)$, it follows that 
     $\sigma_I^2(v,u)+\sigma_{-D}^2(y,z)\leq \eta^2(v,y)$.
     \end{itemize}
 \end{remark}

 \begin{proof}[Proof (of Theorem \ref{thm:main}).] \textit{ad (i).} Due to $I(u)=D(z)$, cf.\  \eqref{strong_duality},  Definition \ref{def:convexity_measure_optimal}, and \eqref{def:eta}, 
 for every $v\in W^{1,p}_D(\Omega)$~and $y\in L^{p'}(\Omega;\mathbb{R}^d)$, we have that 
  \begin{align*}
  \smash{\rho^2_I(v,u)+\rho^2_{-D}(y,z)=I(v)-I(u)+D(z)-D(y)=\eta^2(v,y)}\,.
  \end{align*}
  \textit{ad (ii).} Using \eqref{primal}, \eqref{eq:explicit_representation}, and integration-by-parts, we conclude that \eqref{eta_explicit_representation} applies. 
 \end{proof}
\begin{remark}[Examples]\label{rmk:examples}
 \begin{itemize}[noitemsep,topsep=2pt,leftmargin=!,labelwidth=\widthof{(iii)}]
  \item[(i)] In the \textup{$p$-Dirichlet problem}, cf. \cite{DR07,DK08}, i.e., ${\phi\coloneqq  \smash{\frac{1}{p}\vert \cdot\vert^p}\in C^1(\mathbb{R})}$, $p\hspace{-0.1em}\in\hspace{-0.1em} (1,\infty)$, and $\psi\hspace{-0.1em}\coloneqq \hspace{-0.1em} (\smash{(t,x)^\top}\mapsto -f(x)t)\colon \Omega\times \mathbb{R}\hspace{-0.1em}\to\hspace{-0.1em} \mathbb{R}$, where $f\hspace{-0.1em}\in\hspace{-0.1em} L^{p'}(\Omega)$,~cf.~\cite{dr-nafsa},~we~have~that
  \begin{align*}
   \smash{\rho^2_I(v,u)\sim \|F(\nabla v)-F(\nabla u)\|_{L^2(\Omega;\mathbb{R}^d)}^2\,,\qquad
   \rho^2_{-D}(y,z)\sim \|F^*(y)-F^*(z)\|_{L^2(\Omega;\mathbb{R}^d)}^2}\,,
  \end{align*}
  where $F,F^*\colon \hspace{-0.05em}\mathbb{R}^d\hspace{-0.1em}\to\hspace{-0.1em} \mathbb{R}^d$ for every $a\hspace{-0.1em}\in\hspace{-0.1em} \mathbb{R}^d$ are defined by $F(a)\hspace{-0.1em}\coloneqq \hspace{-0.1em}\smash{\vert a\vert^{\frac{p-2}{2}}}a$ and ${F^*(a)\hspace{-0.1em}\coloneqq \hspace{-0.1em} \smash{\vert a\vert^{\frac{p'-2}{2}}}a}$.

  \item[(ii)] In the \textup{obstacle problem}, cf.\  \cite{BK22Obstacle}, i.e., $\phi\coloneqq  \frac{1}{2}\vert \cdot\vert^2\in C^1(\mathbb{R})$ and $\psi\coloneqq  ({(t,x)^\top\mapsto -f(x)t}+I_{\chi(x)}(t))\colon \Omega\times \mathbb{R}\to \mathbb{R}\cup\{+\infty\}$, where $f\in L^2(\Omega)$ and $\chi\in W^{1,2}(\Omega)$ with $\chi\leq 0$~on~$\Gamma_D$,~cf.~\cite{BK22Obstacle}, where $I_{\chi(x)}(t)\coloneqq 0$ if $t\ge 0$ and $I_{\chi(x)}(t)\coloneqq +\infty$ else, we have that
  \begin{align*}
    \smash{\rho^2_I(v,u)= \tfrac{1}{2}\|\nabla v-\nabla u\|_{L^2(\Omega;\mathbb{R}^d)}^2+\langle -\Lambda,v-u\rangle_{W^{1,2}_D(\Omega)}\,,\qquad
   \rho^2_{-D}(y,z)\ge  \tfrac{1}{2}\|y-z\|_{L^2(\Omega;\mathbb{R}^d)}^2}\,,
  \end{align*}
  where $\Lambda\in (W^{1,2}_D(\Omega))^*$ is defined by $\langle \Lambda,v\rangle_{W^{1,2}_D(\Omega)}\coloneqq (f,v)_{\Omega}-(\nabla u,\nabla v)_{\Omega}$ for all $v\in W^{1,2}_D(\Omega)$.

  \item[(iii)] In an \textup{optimal design  problem}, cf.\ \cite{CL15}, i.e., $\phi\coloneqq \zeta\circ \vert\cdot\vert\in C^1(\mathbb{R})$, where $\zeta(0)\coloneqq 0$, $\zeta'(t)\coloneqq \mu_2 t$ if $t\in [0,t_1]$, $\zeta'(t)\coloneqq \mu_2 t_1$ if $t\in [t_1,t_2]$, and $\zeta'(t)\coloneqq\mu_1 t$ if $t\in [t_2,+\infty)$ for some $0<t_1<t_2$ and $0<\mu_1<\mu_2$ with $t_1\mu_2=t_2\mu_1$, and ${\psi\coloneqq  ((t,x)^\top\mapsto -f(x)t)\colon \Omega\times \mathbb{R}\to \mathbb{R}}$, where $f\in L^2(\Omega)$, cf.\ 
  \cite[Lemma 3.4]{CL15},
  we have that
  \begin{align*}
  \smash{ \rho^2_I(v,u)\ge  \tfrac{1}{2\mu}\|D\phi(\nabla v)-D\phi(\nabla u)\|_{L^2(\Omega;\mathbb{R}^d)}^2\,,\qquad
   \rho^2_{-D}(y,z)\ge  \tfrac{1}{2\mu}\|y-z\|_{L^2(\Omega;\mathbb{R}^d)}^2}\,.
  \end{align*}
  \item[(iv)] In the \textup{Rudin--Osher--Fatemi (ROF) problem}, cf.\ \cite{ROF92}, i.e., 
    $\phi\coloneqq \vert\cdot\vert\in C^0(\mathbb{R})$ and $\psi\coloneqq  ((t,x)^\top\mapsto \frac{\alpha}{2}(t-g(x))^2)\colon \Omega\times \mathbb{R}\to \mathbb{R}$, where $g\in L^2(\Omega)$, cf.\ \cite[Lemma 10.2]{Bar15}, we have that
  \begin{align*}
  \smash{ \rho^2_I(v,u)\ge  \tfrac{\alpha}{2}\|v-u\|_{L^2(\Omega)}^2\,,\qquad
   \rho^2_{-D}(y,z)\ge  \tfrac{1}{2\alpha }\|\textup{div}\,y-\textup{div}\,z\|_{L^2(\Omega)}^2}\,.
  \end{align*}
 \end{itemize}
 \end{remark}

Since the dual problem to the minimization of the negative of \eqref{dual}, in turn, consists in the maximization of the negative of \eqref{primal},
 the roles of the primal problem and the dual problem may be interchanged. An advantage of Theorem \ref{thm:main} consists in the fact that it yields reliable and efficient a posteriori error estimators for both the primal problem and the dual problem, i.e.,\enlargethispage{7.5mm}

\begin{remark}[Reliability and efficiency]
 Theorem \ref{thm:main} also shows that for each $y\in L^{p'}(\Omega;\mathbb{R}^d)$, the estimator $\eta^2_{I,y}\coloneqq  (v\mapsto \eta^2(v,y))\colon W^{1,p}_D(\Omega)\to [0,+\infty]$ 
 satisfies 
 \begin{align}\label{eq:a_posteriori_primal}
 \smash{\rho^2_I(v,u)+\rho^2_{-D}(y,z)=\eta^2_{I,y}(v)}\,,
 \end{align}
 and for each $v\in W^{1,p}_D(\Omega)$, the estimator $\eta^2_{-D,v}\coloneqq  (y\mapsto \eta^2(v,y))\colon L^{p'}(\Omega;\mathbb{R}^d)\to [0,+\infty]$~\mbox{satisfies}
 \begin{align}\label{eq:a_posteriori_dual}
 \smash{\rho^2_I(v,u)+\rho^2_{-D}(y,z)=\eta^2_{-D,v}(y)}\,.
 \end{align}
\end{remark}

For the a posteriori error estimators \eqref{eq:a_posteriori_primal} and \eqref{eq:a_posteriori_dual} for being numerically practicable, it is necessary to have a 
computationally cheap way to obtain sufficiently accurate approximation of the dual solution (for \eqref{eq:a_posteriori_primal}) and/or of the primal solution
(for \eqref{eq:a_posteriori_dual}), respectively. In Section~\ref{sec:discrete_duality}, resorting to (discrete) convex duality relations between a non-conforming Crouzeix--Raviart approximation of the primal problem and a  Raviart--Thomas approximation of the dual problem, we arrive at discrete reconstruction formulas, called \textit{generalized Marini formula},~cf.~\cite{Mar85,Bar21}.\enlargethispage{9mm}

 \subsection{Discrete convex minimization problem and discrete convex duality}\label{sec:discrete_duality}

 \hspace{5mm}Let $\psi_h\colon \Omega\times\mathbb{R}\to \mathbb{R}\cup\{+\infty\}$ denote a suitable approximation\footnote{We refrain from being too precise concerning
 what we mean with \textit{approximation} to allow~for~more~flexibility. Assumptions on both $\phi\colon \mathbb{R}^d\to \mathbb{R}\cup\{+\infty\}$ and $\psi_h\colon\Omega\times\mathbb{R}\to \mathbb{R}\cup\{+\infty\}$, $h>0$, that imply, e.g., $\Gamma$-convergence results can be found in \cite[Proposition 3.3]{Bar21}.} of $ \psi\colon \Omega\times\mathbb{R}\to \mathbb{R}\cup\{+\infty\}$ such that $\psi_h(\cdot,t)\in \mathcal{L}^0(\mathcal{T}_h)$ for all $t\in \mathbb{R}$  and for a.e.\ $x\in \Omega$, $\psi_h(x,\cdot)\colon \Omega\times\mathbb{R}\to \mathbb{R}\cup\{+\infty\}$ is a proper, convex,  and lower semi-continuous functional. Then, we examine the (discrete) convex minimization problem that seeks for a function  $u_h^{\textit{cr}}\in \mathcal{S}^{1,\textit{cr}}_D(\mathcal{T}_h)$ that is minimal for the functional $I_h^{\textit{cr}}\colon \smash{\mathcal{S}^{1,\textit{cr}}_D(\mathcal{T}_h)}\to \mathbb{R}\cup\{+\infty\}$, for every $v_h\in \smash{\mathcal{S}^{1,\textit{cr}}_D(\mathcal{T}_h)}$ defined by
	\begin{align}
		I_h^{\textit{cr}}(v_h)\coloneqq \int_{\Omega}{\phi(\nabla_{\! h} v_h)\,\textup{d}x}+\int_{\Omega}{\psi_h(\cdot,\Pi_h v_h)\,\textup{d}x}\,.\label{discrete_primal}
	\end{align}
	In what follows, we refer the minimization of $I_h^{\textit{cr}}\colon \smash{\mathcal{S}^{1,\textit{cr}}_D(\mathcal{T}_h)}\to \mathbb{R}\cup\{+\infty\}$ to as the \textit{discrete primal problem}.
	In \cite{Bar21,BW21}, it is shown that the corresponding (Fenchel) dual problem to the minimization of \eqref{discrete_primal} 
	consists in the maximization of $D_h^{\textit{rt}}\colon \mathcal{R}T^0_N(\mathcal{T}_h)\to \mathbb{R}\cup\{-\infty\}$,~for~every~${y_h\in\mathcal{R}T^0_N(\mathcal{T}_h)}$ defined by
	\begin{align}
		D_h^{\textit{rt}}(y_h)\coloneqq-\int_{\Omega}{\phi^*(\Pi_h y_h)\,\textup{d}x}-\int_{\Omega}{\psi_h^*(\cdot,\textup{div}\,y_h)\,\textup{d}x}\,.\label{discrete_dual}
	\end{align} 
	A \textit{discrete weak duality relation}, cf. \cite[Proposition 3.1]{Bar21}, applies
	\begin{align}
		\inf_{v_h\in \mathcal{S}^{1,\textit{cr}}_D(\mathcal{T}_h)}{I_h^{\textit{cr}}(v_h)}
		\ge \sup_{y_h\in \mathcal{R}T^0_N(\mathcal{T}_h)}{D_h^{\textit{rt}}(y_h)}\,.\label{eq:discrete_weak_duality}
	\end{align}
    We will always assume that $\phi\colon \mathbb{R}^d\to \mathbb{R}\cup\{+\infty\}$ and $\psi_h\colon \Omega\times \mathbb{R}\to \mathbb{R}\cup\{+\infty\}$ are such that \eqref{discrete_primal} admits at least one minimizer $u_h^{\textit{cr}}\in \mathcal{S}^{1,\textit{cr}}_D(\mathcal{T}_h)$, called the \textit{discrete primal solution},
    \eqref{discrete_dual} admits at least one maximizer  $z_h^{\textit{rt}}\in \mathcal{R}T^0_N(\mathcal{T}_h)$, called the \textit{discrete dual solution}, and that~a~\textit{discrete~strong duality relation} applies, i.e.,
    \begin{align}
	    I_h^{\textit{cr}}(u_h^{\textit{cr}})=D_h^{\textit{rt}}(z_h^{\textit{rt}})\,.\label{eq:discrete_strong_duality}
    \end{align}
    By the Fenchel--Young identity (cf.~\eqref{eq:fenchel_young_id}), \eqref{eq:discrete_strong_duality} is equivalent to the  \textit{discrete convex optimality relations}
	\begin{alignat}{2}
		\Pi_h z_h^{\textit{rt}}\cdot \nabla_{\! h} u_h^{\textit{cr}}&=\phi^*(\Pi_hz_h^{\textit{rt}})+\phi(\nabla_{\! h} u_h^{\textit{cr}})&&\quad\text{ a.e. in }\Omega\,,\label{eq:discrete_optimality_relations.1}\\
		\textup{div}\,z_h^{\textit{rt}}\,\Pi_hu_h^{\textit{cr}}& =\psi_h^*(\cdot,	\textup{div}\,z_h^{\textit{rt}})+\psi_h(\cdot,\Pi_hu_h^{\textit{cr}})&&\quad\text{ a.e. in }\Omega\,.
        \label{eq:discrete_optimality_relations.2}
	\end{alignat}
	If $\phi\in  C^1(\mathbb{R}^d)$, then, by the Fenchel--Young identity (cf.\  \eqref{eq:fenchel_young_id}), \eqref{eq:discrete_optimality_relations.1} is equivalent~to
	\begin{align}
			\Pi_h z_h^{\textit{rt}}=D\phi(\nabla_{\! h} u_h^{\textit{cr}})\quad\text{ in }\mathcal{L}^0(\mathcal{T}_h)^d\,,\label{eq:discrete_optimality_relations.3}
	\end{align}
    and if $\phi^*\in  C^1(\mathbb{R}^d)$, then, by the Fenchel--Young identity (cf.\  \eqref{eq:fenchel_young_id}), \eqref{eq:discrete_optimality_relations.2} is equivalent~to
	\begin{align}
			\nabla_{\! h} u_h^{\textit{cr}}=D\phi^*(\Pi_h z_h^{\textit{rt}})\quad\text{ in }\mathcal{L}^0(\mathcal{T}_h)^d\,.\label{eq:discrete_optimality_relations.4}
	\end{align}
	Similarly, if $\psi_h(x,\cdot)\in C^1(\mathbb{R})$ for a.e.\ $x\in \Omega$, then \eqref{eq:discrete_optimality_relations.2} is equivalent to
	\begin{align}
			\textup{div}\,z_h^{\textit{rt}}=D\psi_h(\cdot,\Pi_hu_h^{\textit{cr}})\quad\text{ in }\mathcal{L}^0(\mathcal{T}_h)\,,\label{eq:discrete_optimality_relations.5}
	\end{align}
    and if $\psi_h^*(x,\cdot)\in C^1(\mathbb{R})$ for a.e.\ $x\in \Omega$, then \eqref{eq:discrete_optimality_relations.2} is equivalent to
    \begin{align}
			\Pi_hu_h^{\textit{cr}}=D\psi_h^*(\cdot,\textup{div}\,z_h^{\textit{rt}})\quad\text{ in }\mathcal{L}^0(\mathcal{T}_h)\,.\label{eq:discrete_optimality_relations.6}
	\end{align}\newpage
    
    The relations \eqref{eq:discrete_optimality_relations.3}--\eqref{eq:discrete_optimality_relations.6} motivate the following discrete recontruction formulas for a discrete dual solution $z_h^{\textit{rt}}\in \mathcal{R}T^0_N(\mathcal{T}_h)$ from a discrete primal solution  $u_h^{\textit{cr}}\in \mathcal{S}^{1,cr}_D(\mathcal{T}_h)$~and~vice~versa, called \textit{generalized Marini formulas}, cf.\  \cite{Mar85,Bar21}.

    \begin{proposition}[Generalized Marini formulas]\label{prop:gen_marini}
        The following statements apply:
        \begin{itemize}[noitemsep,topsep=2pt,leftmargin=!,labelwidth=\widthof{(ii)}]
            \item[(i)] If $\phi\in  C^1(\mathbb{R}^d)$ and $\psi_h(x,\cdot)\in C^1(\mathbb{R})$ for a.e.\ $x\in \Omega$, then, given a minimizer $u_h^{\textit{cr}}\in \mathcal{S}^{1,cr}_D(\mathcal{T}_h)$~of \eqref{discrete_primal},
            a maximizer $z_h^{\textit{rt}}\in \mathcal{R}T^0_N(\mathcal{T}_h)$ of \eqref{discrete_dual} is given via
            	\begin{align}
	               z_h^{\textit{rt}}= D\phi(\nabla_{\! h} u_h^{\textit{cr}})+\frac{D\psi_h(\cdot, \Pi_hu_h^{\textit{cr}})}{d}\big(\textup{id}_{\mathbb{R}^d}-\Pi_h\textup{id}_{\mathbb{R}^d}\big)\quad\text{ in }\mathcal{R}T^0_N(\mathcal{T}_h)\,,\label{eq:reconstruction_formula.1}
	           \end{align}
            a discrete strong duality relation applies, i.e., \eqref{eq:discrete_strong_duality}.
            \item[(ii)] If $\phi^*\in  C^1(\mathbb{R}^d)$ and $\psi_h^*(x,\cdot)\in C^1(\mathbb{R})$ for a.e.\ $x\in \Omega$, then, given a maximizer $z_h^{\textit{rt}}\in \mathcal{R}T^0_N(\mathcal{T}_h)$ of \eqref{discrete_dual}, a minimizer $u_h^{\textit{cr}}\in \mathcal{S}^{1,cr}_D(\mathcal{T}_h)$ of \eqref{discrete_primal} is given via
            	\begin{align}
                   u_h^{\textit{cr}} = D\psi_h^*(\cdot,\textup{div}\,z_h^{\textit{rt}})+ D\phi^*(\Pi_h z_h^{\textit{rt}})\cdot\big(\textup{id}_{\mathbb{R}^d}-\Pi_h\textup{id}_{\mathbb{R}^d}\big)
	              \quad\text{ in }\mathcal{S}^{1,cr}_D(\mathcal{T}_h)\,,\label{eq:reconstruction_formula.2}
	           \end{align}
            a discrete strong duality relation applies, i.e., \eqref{eq:discrete_strong_duality}.
        \end{itemize}        
    \end{proposition}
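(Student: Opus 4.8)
The plan is to prove statement (i); statement (ii) then follows by the symmetry of the construction (the dual of the negative dual is the negative primal), so I would only sketch the modifications for (ii) at the end. The central idea is that the candidate vector field \eqref{eq:reconstruction_formula.1} is built precisely so that the continuous optimality relations \eqref{optimality_relations.4} and \eqref{optimality_relations.5} hold in their discrete incarnations \eqref{eq:discrete_optimality_relations.3} and \eqref{eq:discrete_optimality_relations.5}, from which discrete strong duality \eqref{eq:discrete_strong_duality} is immediate via the Fenchel--Young identity. So the real content is verifying that the formula genuinely defines an element of $\mathcal{R}T^0_N(\mathcal{T}_h)$ that satisfies these two identities.

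First I would check that \eqref{eq:reconstruction_formula.1} lands in $\mathcal{R}T^0_N(\mathcal{T}_h)$. Writing $z_h^{\textit{rt}}=a_T + b_T(\mathrm{id}_{\mathbb{R}^d}-x_T)$ element-wise with $a_T\coloneqq D\phi(\nabla_{\!h}u_h^{\textit{cr}})|_T$ constant and $b_T\coloneqq \frac{1}{d}D\psi_h(\cdot,\Pi_h u_h^{\textit{cr}})|_T$ constant (using $\Pi_h\mathrm{id}_{\mathbb{R}^d}|_T=x_T$), each local component is an affine Raviart--Thomas function of the standard form; the factor $1/d$ is exactly what makes $\operatorname{div}$ of the radial part equal $b_T\cdot d=D\psi_h(\cdot,\Pi_h u_h^{\textit{cr}})|_T$. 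The nontrivial point is the normal-continuity across interior sides and the vanishing normal trace on $\Gamma_N$. I would establish this via the orthogonality characterization \eqref{eq:decomposition.2}: it suffices to show $(z_h^{\textit{rt}},\nabla_{\!h}v_h)_\Omega$ equals the correct expression for all $v_h\in\mathcal{S}^{1,cr}_D(\mathcal{T}_h)$, or more directly to test $z_h^{\textit{rt}}$ against the Crouzeix--Raviart gradients and invoke the discrete integration-by-parts formula \eqref{eq:pi0} together with the Euler--Lagrange equation satisfied by the minimizer $u_h^{\textit{cr}}$. Concretely, the Euler--Lagrange equation for \eqref{discrete_primal} reads $(D\phi(\nabla_{\!h}u_h^{\textit{cr}}),\nabla_{\!h}v_h)_\Omega+(D\psi_h(\cdot,\Pi_h u_h^{\textit{cr}}),\Pi_h v_h)_\Omega=0$ for all $v_h\in\mathcal{S}^{1,cr}_D(\mathcal{T}_h)$, and this is exactly the algebraic identity needed to place $z_h^{\textit{rt}}$ in the range of the divergence-admissible space.

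Next I would verify the two optimality relations. Applying $\Pi_h$ to \eqref{eq:reconstruction_formula.1} kills the radial perturbation, because $\Pi_h(\mathrm{id}_{\mathbb{R}^d}-\Pi_h\mathrm{id}_{\mathbb{R}^d})=0$ by definition of the $L^2$-projection, leaving $\Pi_h z_h^{\textit{rt}}=D\phi(\nabla_{\!h}u_h^{\textit{cr}})$, which is \eqref{eq:discrete_optimality_relations.3}. Taking the element-wise divergence of \eqref{eq:reconstruction_formula.1}, the constant part $D\phi(\nabla_{\!h}u_h^{\textit{cr}})$ contributes nothing, while the radial part contributes $\frac{1}{d}D\psi_h(\cdot,\Pi_h u_h^{\textit{cr}})\cdot d=D\psi_h(\cdot,\Pi_h u_h^{\textit{cr}})$, giving \eqref{eq:discrete_optimality_relations.5}. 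By the Fenchel--Young identity \eqref{eq:fenchel_young_id}, these two pointwise identities are equivalent to \eqref{eq:discrete_optimality_relations.1} and \eqref{eq:discrete_optimality_relations.2}, whose sum, after integrating and using \eqref{eq:pi0}, yields $I_h^{\textit{cr}}(u_h^{\textit{cr}})=D_h^{\textit{rt}}(z_h^{\textit{rt}})$; combined with the discrete weak duality \eqref{eq:discrete_weak_duality}, this simultaneously shows $z_h^{\textit{rt}}$ is a maximizer and establishes \eqref{eq:discrete_strong_duality}.

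The main obstacle I anticipate is the conformity claim $z_h^{\textit{rt}}\in\mathcal{R}T^0_N(\mathcal{T}_h)$, since the element-wise formula \eqref{eq:reconstruction_formula.1} is a priori only in $\mathcal{L}^1(\mathcal{T}_h)^d$ and the normal-trace continuity is not manifest; this is where the minimality of $u_h^{\textit{cr}}$ (through its Euler--Lagrange equation) must enter, and the cleanest route is precisely the second integration-by-parts characterization recorded after \eqref{eq:pi0}, which converts the variational identity into membership in $\mathcal{R}T^0_N(\mathcal{T}_h)$. For part (ii), the argument is the mirror image: one uses \eqref{eq:decomposition.1} and the Euler--Lagrange equation of the discrete dual problem to show \eqref{eq:reconstruction_formula.2} defines an element of $\mathcal{S}^{1,cr}_D(\mathcal{T}_h)$, then reads off \eqref{eq:discrete_optimality_relations.4} and \eqref{eq:discrete_optimality_relations.6} by applying $\Pi_h$ and $\nabla_{\!h}$, respectively, and concludes strong duality as before.
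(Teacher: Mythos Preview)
Your approach is correct and matches the paper's, which cites \cite{Bar21} for (i) and writes out only (ii) in full---the mirror image of your emphasis---using exactly the ingredients you list: the Euler--Lagrange equation of the appropriate discrete problem, the orthogonal decompositions \eqref{eq:decomposition.2}--\eqref{eq:decomposition.1}, the Fenchel--Young identity, and the discrete integration-by-parts formula \eqref{eq:pi0}.

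One technical point is worth tightening. You invoke the characterization recorded after \eqref{eq:pi0} (equivalently the decomposition \eqref{eq:decomposition.2}) to certify $z_h^{\textit{rt}}\in\mathcal{R}T^0_N(\mathcal{T}_h)$, but that characterization is stated for element-wise \emph{constant} vector fields, whereas your candidate is element-wise affine. The paper's remedy---visible both in its proof of part (ii) here and in the proof of Proposition~\ref{prop:discrete_convex_duality}(ii)---is to first produce an auxiliary conforming element matching one half of the data (e.g., some $y_h\in\mathcal{R}T^0_N(\mathcal{T}_h)$ with $\operatorname{div}y_h=D\psi_h(\cdot,\Pi_h u_h^{\textit{cr}})$, using surjectivity of the discrete divergence), so that the \emph{difference} $z_h^{\textit{rt}}-y_h$ is genuinely in $\mathcal{L}^0(\mathcal{T}_h)^d$; the Euler--Lagrange identity then shows this difference is orthogonal to $\nabla_{\!h}(\mathcal{S}^{1,cr}_D(\mathcal{T}_h))$, whence it lies in $\ker(\operatorname{div}|_{\mathcal{R}T^0_N(\mathcal{T}_h)})\subseteq\mathcal{R}T^0_N(\mathcal{T}_h)$ by \eqref{eq:decomposition.2}, and conformity of $z_h^{\textit{rt}}$ follows. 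Your sketch has all the right pieces; this reduction step is the one thing to make explicit.
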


     \begin{remark}
        It is possible to derive reconstructions formulas similar to \eqref{eq:reconstruction_formula.1} and \eqref{eq:reconstruction_formula.2} under weak conditions, e.g., resorting to a regularization argument (cf. Proposition \ref{prop:discrete_convex_duality})  or given discrete Lagrange multipliers (cf. \cite[Proposition 3.3]{BK22Obstacle}).
     \end{remark}

    \begin{proof}
        \textit{ad (i).} See \cite[Proposition 3.1]{Bar21}.\enlargethispage{5mm}

        \textit{ad (ii).} By definition, it holds $ u_h^{\textit{cr}}\in \mathcal{L}^1(\mathcal{T}_h)$ and the discrete convex optimality relation \eqref{eq:discrete_optimality_relations.6} is satisfied.
        Since $z_h^{\textit{rt}}\in \mathcal{R}T^0_N(\mathcal{T}_h)$ is maximal for \eqref{discrete_dual} as well as $\phi^*\in  C^1(\mathbb{R}^d)$~and~${\psi_h^*(x,\cdot)\in C^1(\mathbb{R})}$ for a.e.\ $x\in \Omega$, for every $y_h\in\mathcal{R}T^0_N(\mathcal{T}_h)$, we have that
        \begin{align}\label{prop:gen_marini.1}
            (D\phi^*(\Pi_h z_h^{\textit{rt}}),\Pi_hy_h)_{\Omega}+(D\psi_h^*(\cdot,\textup{div}\,z_h^{\textit{rt}}),\textup{div}\,y_h)_{\Omega}=0\,.
        \end{align}
        In \hspace{-0.1mm}particular, \hspace{-0.1mm}\eqref{prop:gen_marini.1} \hspace{-0.1mm}implies \hspace{-0.1mm}that \hspace{-0.1mm}$D\phi^*(\Pi_h z_h^{\textit{rt}})\hspace{-0.1em}\in\hspace{-0.1em} (\textup{ker}(\textup{div}|_{\mathcal{R}T^0_N(\mathcal{T}_h)}))^\perp$.
       \hspace{-0.1mm}Appealing \hspace{-0.1mm}to \hspace{-0.1mm}\cite[Lemma~\hspace{-0.1mm}2.4]{CP20}, it holds
        $(\textup{ker}(\textup{div}|_{\mathcal{R}T^0_N(\mathcal{T}_h)}))^\perp=\nabla_{\!h}(\mathcal{S}^{1,cr}_D(\mathcal{T}_h))$. Therefore, there exists 
        $v_h\in \mathcal{S}^{1,cr}_D(\mathcal{T}_h)$ such that  
        \begin{align}\label{prop:gen_marini.2}
            \nabla_{\!h} v_h= D\phi^*(\Pi_h z_h^{\textit{rt}})\quad\text{ in }\mathcal{L}^0(\mathcal{T}_h)^d\,.
        \end{align}
        Hence, for every $y_h\in\mathcal{R}T^0_N(\mathcal{T}_h)$, resorting to the discrete integration-by-parts~formula~\eqref{eq:pi0}, \eqref{prop:gen_marini.2}, \eqref{prop:gen_marini.1}, and \eqref{eq:discrete_optimality_relations.6}, we~find~that
        \begin{align*}
            \begin{aligned}
               (\Pi_hv_h-\Pi_h u_h^{cr},\textup{div}\,y_h)_{\Omega}
               =- (D\phi^*(\Pi_h z_h^{\textit{rt}}),\Pi_hy_h)_{\Omega}-(D\psi_h^*(\cdot,\textup{div}\,z_h^{\textit{rt}}),\textup{div}\,y_h)_{\Omega}=0\,.
            \end{aligned}
        \end{align*}
        In other words, for every $y_h\in\mathcal{R}T^0_N(\mathcal{T}_h)$, we have that
        \begin{align}\label{prop:gen_marini.2.1}
            \begin{aligned}
                ( v_h-u_h^{cr},\textup{div}\,y_h)_{\Omega}= (\Pi_h v_h-\Pi_h u_h^{cr},\textup{div}\,y_h)_{\Omega}=0\,.
            \end{aligned}
        \end{align}
        On the other hand, we have that $\nabla_{\! h}(v_h-u_h^{cr})=0$ in $\mathcal{L}^0(\mathcal{T}_h)^d$, i.e., $v_h-u_h^{cr}\in \mathcal{L}^0(\mathcal{T}_h)$.
        Therefore, \eqref{prop:gen_marini.2.1} in conjunction with \eqref{eq:decomposition.1} implies that
        $v_h-u_h^{cr}\in (\textup{div}\,(\mathcal{R}T^0_N(\mathcal{T}_h)))^{\perp}=\textup{ker}(\nabla_{\!h}|_{\mathcal{S}^{1,cr}_D(\mathcal{T}_h)})$. As a result, due to $v_h\in \smash{\mathcal{S}^{1,cr}_D(\mathcal{T}_h)}$, we conclude that $u_h^{cr}\in \smash{\mathcal{S}^{1,cr}_D(\mathcal{T}_h)}$ with
        \begin{align}\label{prop:gen_marini.3}
            \begin{aligned}
			     \nabla_{\! h} u_h^{\textit{cr}}&=D\phi^*(\Pi_h z_h^{\textit{rt}})&&\quad\text{ in }\mathcal{L}^0(\mathcal{T}_h)^d\,,\\
			     \Pi_hu_h^{\textit{cr}}&=D\psi_h^*(\cdot,\textup{div}\,z_h^{\textit{rt}})&&\quad\text{ in }\mathcal{L}^0(\mathcal{T}_h)\,.
            \end{aligned}
	    \end{align}
        By the Fenchel--Young identity, cf.\ \eqref{eq:fenchel_young_id}, \eqref{prop:gen_marini.3} is equivalent to 
        \begin{align}\label{prop:gen_marini.4}
            \begin{aligned}
                	\Pi_h z_h^{\textit{rt}}\cdot \nabla_{\! h} u_h^{\textit{cr}}&=\phi^*(\Pi_hz_h^{\textit{rt}})+\phi(\nabla_{\! h} u_h^{\textit{cr}})&&\quad\text{ a.e. in }\Omega\,,\\
				    \textup{div}\,z_h^{\textit{rt}}\,\Pi_hu_h^{\textit{cr}}& =\psi_h^*(\cdot,	\textup{div}\,z_h^{\textit{rt}})+\psi_h(\cdot,\Pi_hu_h^{\textit{cr}})&&\quad\text{ a.e. in }\Omega\,.
            \end{aligned}
        \end{align}
        Eventually, adding \eqref{prop:gen_marini.4}$_1$ and \eqref{prop:gen_marini.4}$_2$, subsequently, integration with respect to $x\in \Omega$, resorting~to the discrete integration-by-parts formula \eqref{eq:pi0}, and using the definitions \eqref{discrete_primal}~and~\eqref{discrete_dual}, we arrive at $I_h^{\textit{cr}}(u_h^{\textit{cr}})=D_h^{\textit{rt}}(z_h^{\textit{rt}})$, 
        which, appealing to the discrete weak duality relation \eqref{eq:discrete_weak_duality}, implies that $u_h^{\textit{cr}}\in \mathcal{S}^{1,cr}_D(\mathcal{T}_h)$ is minimal for \eqref{discrete_primal}.
    \end{proof}

\newpage
 \section{Application to the Rudin--Osher--Fatemi (ROF) model}\label{sec:ROF}

 \hspace{5mm}In this section, we transfer the concepts derived in Section \ref{sec:convex_min} to the non-differentiable Rudin--Osher--Fatemi~(ROF)~model, cf.\  \cite{ROF92}. The approximation of the ROF model has been investigated by numerous authors: A priori error estimates has been derived in \cite{BNS15,CP20,Bar21,BTW21,BKROF22}.
 A posteriori error estimates and adaptivity results can be found in \cite{bartels15,FV04,BM20,BTW21,BW22}.\enlargethispage{7mm}
 
\subsection{The continuous Rudin--Osher--Fatemi (ROF) model}\label{subsec:continuous_ROF_model}
	
	\hspace{5mm}Given a function $g\in L^2(\Omega)$, i.e., the \textit{noisy image}, and a constant parameter $\alpha>0$,~the~\textit{fidelity parameter} the Rudin--Osher--Fatemi~(ROF) model, cf.\  \cite{ROF92}, consists in the minimization of the functional $I\colon BV(\Omega)\cap L^2(\Omega)\to \mathbb{R}$, for every $v\in BV(\Omega)\cap L^2(\Omega)$ defined by
	\begin{align}
		\smash{I(v)\coloneqq  \vert \textup{D}v\vert (\Omega)+\tfrac{\alpha}{2}\|v-g\|^2_{L^2(\Omega)}}\,.\label{ROF-primal}
	\end{align}
	In \cite[Theorem 10.5 \& Theorem 10.6]{Bar15}, it has been established that there exists~a~unique~minimizer $u\hspace{-0.1em}\in\hspace{-0.1em} BV(\Omega)\cap L^2(\Omega)$ 
 \hspace{-0.1mm}of \hspace{-0.1mm}\eqref{ROF-primal}.  
 \hspace{-0.1mm}Appealing \hspace{-0.1mm}to \hspace{-0.1mm}\cite[Theorem \hspace{-0.1mm}2.2]{HK04} \hspace{-0.1mm}or \hspace{-0.1mm}\cite[Section \hspace{-0.1mm}10.1.3]{Bar15},~\hspace{-0.1mm}the~\hspace{-0.1mm}\mbox{corresponding} (Fenchel) dual problem to the minimization of \eqref{ROF-primal} consists in the maximization of the functional $D\colon W^2_N(\textup{div};\Omega) \cap L^\infty(\Omega;\mathbb{R}^d)\to \mathbb{R} \cup \{-\infty\}$, for every $\smash{y\in W^2_N(\textup{div};\Omega) \cap L^\infty(\Omega;\mathbb{R}^d)}$~defined~by
	\begin{align}
		\smash{D(y)\coloneqq -I_{K_1(0)}(y) -\tfrac{1}{2\alpha}\|\textup{div}\,y+\alpha\,g\|_{L^2(\Omega)}^2+\tfrac{\alpha}{2}\|g\|_{L^2(\Omega)}^2}
  \,,\label{ROF-dual}
	\end{align}
    where $I_{K_1(0)}\colon L^\infty(\Omega;\mathbb{R}^d)\to \mathbb{R} \cup \{\infty\}$ is defined by $I_{K_1(0)}(y) \coloneqq  0$ if $y\in L^\infty(\Omega;\mathbb{R}^d)$ with $\vert y\vert\leq 1$ a.e.\ in $\Omega$ and $I_{K_1(0)}(y)\coloneqq  \infty $ else. Apart from that, in \cite[Theorem 2.2]{HK04}, it is shown~that~\eqref{ROF-dual}~admits a maximizer ${z\in W^2_N(\textup{div};\Omega)\cap L^\infty(\Omega;\mathbb{R}^d)}$ and that a strong duality relation applies, i.e.,\vspace{-0.25mm}
	\begin{align}
		\smash{I(u)=D(z)}\,.\label{ROF_strong_duality_principle}
	\end{align} 
	Appealing to \cite[Proposition 10.4]{Bar15}, \eqref{ROF_strong_duality_principle} is equivalent to\vspace{-0.25mm} the convex optimality relations
	\begin{alignat}{2}
		\textup{div}\,z&=\alpha\, (u-g)\quad\text{ in }L^2(\Omega)\,,\label{ROF_optimality-relations.1}
  \\ -(u,\textup{div}\,z)_{\Omega}&=\vert \textup{D}u\vert(\Omega)\,.\label{ROF_optimality-relations.2}
	\end{alignat}

    Next, if we introduce, by analogy with Section \ref{sec:convex_min}, the \textit{primal-dual error~estimator}
    $\eta^2\colon BV(\Omega)\times (W^2_N(\textup{div};\Omega)\cap L^\infty(\Omega;\mathbb{R}^d))\to [0,+\infty]$, for every $v\in BV(\Omega)$ and $y\in W^2_N(\textup{div};\Omega)\cap L^\infty(\Omega;\mathbb{R}^d)$  defined by\vspace{-1mm}
    \begin{align}
        \smash{\eta^2(v,y)\coloneqq I(v)-D(y)}\,,\label{def:eta_ROF}
    \end{align}
    then the concepts of Section \ref{sec:convex_min} can be transferred to the ROF model.\enlargethispage{5mm}

 \begin{theorem}[Explicit  (a posteriori) error representation]\label{thm:main_2} The following statements apply:
 \begin{itemize}[noitemsep,topsep=2pt,leftmargin=!,labelwidth=\widthof{(ii)}]
     \item[(i)] For every $v\in BV(\Omega)$ and $y\in W^2_N(\textup{div};\Omega)\cap L^\infty(\Omega;\mathbb{R}^d)$, we have that\vspace{-0.25mm}
  \begin{align*}
  \smash{\rho^2_I(v,u)+\rho^2_{-D}(y,z)=\eta^2(v,y)}\,.
  \end{align*}
    \item[(ii)] For every $v\in BV(\Omega)$ and $y\in W^2_N(\textup{div};\Omega)\cap L^\infty(\Omega;\mathbb{R}^d)$, we have that 
  \begin{align}\label{eta_explicit_representation_ROF}
     \smash{ \eta^2(v,y)= \vert \mathrm{D}v\vert(\Omega)+(\textup{div}\,y,v)_{\Omega}+\tfrac{1}{2\alpha}\|\textup{div}\,y-\alpha\,(v-g)\|_{L^2(\Omega)}^2+I_{K_1(0)}(y)}\,.
  \end{align} 
 \end{itemize}
 \end{theorem}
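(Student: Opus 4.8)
The plan is to mirror the structure of Theorem \ref{thm:main}, whose proof is already given in the excerpt, and adapt it to the non-differentiable ROF setting. For part (i), I would invoke strong duality \eqref{ROF_strong_duality_principle}, namely $I(u)=D(z)$, together with the definitions of the optimal convexity measures $\rho_I^2$ and $\rho_{-D}^2$ from Definition \ref{def:convexity_measure_optimal}. Since $u\in BV(\Omega)\cap L^2(\Omega)$ is minimal for $I$ and $z$ is maximal for $D$, we have $\rho_I^2(v,u)=I(v)-I(u)$ and $\rho_{-D}^2(y,z)=D(z)-D(y)$. Adding these and using the definition \eqref{def:eta_ROF} of the estimator, the terms $I(u)$ and $D(z)$ cancel by strong duality, yielding $\rho_I^2(v,u)+\rho_{-D}^2(y,z)=I(v)-D(y)=\eta^2(v,y)$. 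This part is entirely formal and identical in spirit to Theorem \ref{thm:main} (i).

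For part (ii), I would simply expand $\eta^2(v,y)=I(v)-D(y)$ using the explicit forms \eqref{ROF-primal} and \eqref{ROF-dual}. Writing out $I(v)=\vert\mathrm{D}v\vert(\Omega)+\tfrac{\alpha}{2}\|v-g\|_{L^2(\Omega)}^2$ and $-D(y)=I_{K_1(0)}(y)+\tfrac{1}{2\alpha}\|\mathrm{div}\,y+\alpha g\|_{L^2(\Omega)}^2-\tfrac{\alpha}{2}\|g\|_{L^2(\Omega)}^2$, the goal is to recombine the quadratic terms into the single squared term $\tfrac{1}{2\alpha}\|\mathrm{div}\,y-\alpha(v-g)\|_{L^2(\Omega)}^2$ that appears in \eqref{eta_explicit_representation_ROF}, while the remaining pieces should collapse to $\vert\mathrm{D}v\vert(\Omega)+(\mathrm{div}\,y,v)_\Omega+I_{K_1(0)}(y)$. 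The computation hinges on the algebraic identity
\begin{align*}
\tfrac{1}{2\alpha}\|\mathrm{div}\,y-\alpha(v-g)\|_{L^2(\Omega)}^2=\tfrac{1}{2\alpha}\|\mathrm{div}\,y+\alpha g\|_{L^2(\Omega)}^2-(\mathrm{div}\,y,v)_\Omega-(\alpha g,v)_\Omega+\tfrac{\alpha}{2}\|v\|_{L^2(\Omega)}^2\,,
\end{align*}
obtained by expanding the square. Substituting this back and collecting the fidelity and data terms, I expect the $\tfrac{\alpha}{2}\|v-g\|_{L^2(\Omega)}^2$ from $I(v)$ together with the $-\tfrac{\alpha}{2}\|g\|_{L^2(\Omega)}^2$ from $-D(y)$ to reconcile exactly with the cross terms produced above, leaving precisely $(\mathrm{div}\,y,v)_\Omega$ as the surviving linear coupling.

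The main obstacle is not conceptual but bookkeeping: one must verify that every quadratic and cross term cancels correctly so that the coefficient of $\|v\|_{L^2(\Omega)}^2$ and of $(g,v)_\Omega$ vanishes and only the stated terms remain. Concretely, the $\tfrac{\alpha}{2}\|v\|_{L^2(\Omega)}^2$ and $-\alpha(g,v)_\Omega$ arising from expanding $\tfrac{\alpha}{2}\|v-g\|_{L^2(\Omega)}^2$ must be matched against the $+\alpha(g,v)_\Omega$ and $-\tfrac{\alpha}{2}\|v\|_{L^2(\Omega)}^2$ contributions extracted from the completed square; checking this cancellation carefully is the crux. A subtle point worth flagging is well-definedness: in \eqref{eta_explicit_representation_ROF} the term $(\mathrm{div}\,y,v)_\Omega$ is an $L^2$ pairing that is finite since $v\in L^2(\Omega)$ and $\mathrm{div}\,y\in L^2(\Omega)$, and the indicator $I_{K_1(0)}(y)$ simply reproduces the constraint already present in $D$, so no integration-by-parts or boundary analysis is required here—unlike Theorem \ref{thm:main} (ii), this representation retains $\vert\mathrm{D}v\vert(\Omega)$ and $(\mathrm{div}\,y,v)_\Omega$ separately rather than merging them, precisely because $v\in BV(\Omega)$ need not lie in $W^{1,1}$.
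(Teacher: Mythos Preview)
Your proposal is correct and follows essentially the same approach as the paper: part (i) is identical (strong duality $I(u)=D(z)$ plus the definitions of $\rho_I^2$, $\rho_{-D}^2$, and $\eta^2$), and part (ii) in both cases is the same elementary expand-and-collect computation on the quadratic terms, with the paper organizing the completion of the square in a slightly different order but arriving at the same cancellations you identify. Your remark that no integration-by-parts is needed here (in contrast to Theorem~\ref{thm:main}(ii)) because the representation keeps $\vert\mathrm{D}v\vert(\Omega)$ and $(\mathrm{div}\,y,v)_\Omega$ separate is a correct and worthwhile observation.
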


 \begin{proof}
 \textit{ad (i).} Due to $I(u)=D(z)$, cf.\  \eqref{ROF_strong_duality_principle}, Definition \ref{def:convexity_measure_optimal}, and \eqref{def:eta_ROF},
 for every $v\in BV(\Omega)$ and $y\in  W^2_N(\textup{div};\Omega)\cap L^\infty(\Omega;\mathbb{R}^d)$, we have that
  \begin{align*}
   \smash{\rho^2_I(v,u)+\rho^2_{-D}(y,z)=I(v)-I(u)+D(z)-D(y)=\eta^2(v,y)}\,.
  \end{align*}
  \textit{ad (ii).} For every $v\in BV(\Omega)$ and $y\in W^2_N(\textup{div};\Omega)\cap L^\infty(\Omega;\mathbb{R}^d)$, we have that 
 \begin{align*}
 \eta^2(v,y)&=\vert \mathrm{D}v\vert(\Omega)+(\textup{div}\,y,v)_{\Omega}+\tfrac{1}{2\alpha}\|\alpha\,(v-g)\|_{L^2(\Omega)}^2\\&\quad
 -\tfrac{1}{2\alpha}2(\textup{div}\,y,\alpha\,v)_{\Omega}+\tfrac{1}{2\alpha}\|\textup{div}\,y+\alpha\, g\|_{L^2(\Omega)}^2-\tfrac{\alpha}{2}\|g\|_{L^2(\Omega)^2}^2+I_{K_1(0)}(y)
 \\&=\vert \mathrm{D}v\vert(\Omega)+(\textup{div}\,y,v)_{\Omega}+\tfrac{\alpha}{2}\|v-g\|_{L^2(\Omega)}^2\\&\quad-
 \tfrac{1}{2\alpha}\|\textup{div}\,y-\alpha\,(v-g)\|_{L^2(\Omega)}^2-\tfrac{\alpha}{2}\|v-g\|_{L^2(\Omega)}^2+I_{K_1(0)}(y)\,,
 \end{align*}
 which yields the claimed representation.
 \end{proof}

Restricting \hspace{-0.2mm}the \hspace{-0.2mm}estimator \hspace{-0.2mm}\eqref{def:eta_ROF} \hspace{-0.2mm}to \hspace{-0.2mm}subclasses \hspace{-0.2mm}of \hspace{-0.2mm}$BV(\Omega)$ \hspace{-0.2mm}and \hspace{-0.2mm}$W^2_N(\textup{div};\Omega)\cap L^\infty(\Omega;\mathbb{R}^d)$,~\hspace{-0.2mm}\mbox{respectively}, for which an appropriate integration-by-parts formula apply, e.g., \eqref{eq:pi0}, it is possible to derive alternative representations of the estimator \eqref{def:eta_ROF}, whose integrands are point-wise non-negative~and, thus, suitable as local refinement indicators.\vspace{-0.25mm}

 \begin{remark}[Alternative representations of \eqref{def:eta_ROF} and local refinement indicators]\label{rem:representations}\hphantom{\qquad\qquad}\vspace{-0.25mm}
     \begin{itemize}[noitemsep,topsep=2pt,leftmargin=!,labelwidth=\widthof{(iii)}]
         \item[(i)] For every $v\in W^{1,1}(\Omega)$ and $y\in W^2_N(\textup{div};\Omega)\cap L^\infty(\Omega;\mathbb{R}^d)$, by integration-by-parts,~it~holds\vspace{-0.25mm}
         \begin{align*}
              \eta^2(v,y)=\|\nabla v\|_{L^1(\Omega;\mathbb{R}^d)}-(\nabla v,y)_{\Omega}+\tfrac{1}{2\alpha}\|\textup{div}\,y+\alpha\,(v-g)\|_{L^2(\Omega)}^2+I_{K_1(0)}(y)\ge 0\,.
         \end{align*}

         \item[(ii)] For every $T\in \mathcal{T}_h$, we define the local refinement indicator $\eta_T^2\colon W^{1,1}(\Omega)\times  W^2_N(\textup{div};\Omega)\cap L^\infty(\Omega;\mathbb{R}^d)\to [0,+\infty]$ for every $v\in W^{1,1}(\Omega)$ and $y\in  W^2_N(\textup{div};\Omega)\cap L^\infty(\Omega;\mathbb{R}^d)$ by\vspace{-0.25mm}
        \begin{align*}
            \eta^2_{T,W}(v,y)\coloneqq \|\nabla v\|_{L^1(T;\mathbb{R}^d)}-(\nabla v,y)_T+\tfrac{1}{2\alpha}\|\textup{div}\,y+\alpha\,(v-g)\|_{L^2(T)}^2+I_{K_1(0)}(y)\ge 0\,.
        \end{align*}

         \item[(iii)] For every $v_h\in \mathcal{S}^{1,cr}(\Omega)$ and $y_h\in \mathcal{R}T^0_N(\mathcal{T}_h)$, by the representation of the total variation of Crouzeix--Raviart functions \eqref{eq:total_variation_cr} and the discrete integration-by-parts formula~\eqref{eq:pi0},~it~holds\vspace{-0.25mm}
         \begin{align*}
              \eta^2(v_h,y_h)&=\|\nabla_{\! h} v_h\|_{L^1(\Omega;\mathbb{R}^d)}+\|\jump{v_h}\|_{L^1(\mathcal{S}_h)}-(\nabla_{\! h} v_h,\Pi_h y_h)_{\Omega}\\&\quad +\tfrac{1}{2\alpha}\|\textup{div}\,y_h+\alpha\,(v_h-g)\|_{L^2(\Omega)}^2+I_{K_1(0)}(y_h)\ge 0\,.
         \end{align*}

        \item[(iv)] For every $T\in \mathcal{T}_h$, we define the discrete local refinement indicator $\eta_{T,\textit{CR}}^2\colon {\mathcal{S}^{1,cr}(\mathcal{T}_h)\times \mathcal{R}T^0_N(\mathcal{T}_h)}$ $\to [0,+\infty]$ for every $v_h\in \mathcal{S}^{1,cr}(\mathcal{T}_h)$ and $y_h\in \mathcal{R}T^0_N(\mathcal{T}_h)$ by\vspace{-0.25mm}
        \begin{align*}
            \eta^2_{T,\textit{CR}}(v_h,y_h)&\coloneqq \|\nabla v_h\|_{L^1(T;\mathbb{R}^d)}+\sum_{S\in \mathcal{S}_h;S\subseteq T}{\|\jump{v_h}\|_{L^1(S)}}-(\nabla_{\! h} v_h,\Pi_h y_h)_T\\[-0.5mm]&\quad +\tfrac{1}{2\alpha}\|\textup{div}\,y_h+\alpha\,(v_h-g)\|_{L^2(T)}^2+I_{K_1(0)}(y_h)\ge 0\,.
        \end{align*}
     \end{itemize}
 \end{remark}

    We emphasize that the primal-dual error estimator \eqref{def:eta_ROF} and the representations \eqref{eta_explicit_representation_ROF} or in Remark \ref{rem:representations} (i) \& (ii) are  well-known, cf.\ \cite{bartels15,BM20,BTW21}. However, the combination of \eqref{def:eta_ROF} with the representation of the total variation of Crouzeix--Raviart functions \eqref{eq:total_variation_cr} and the discrete integration-by-parts formula \eqref{eq:pi0} in Remark \ref{rem:representations} (iii) \& (iv), to the best of the authors' knowledge, is new and leads to significantly improved experimental convergence rates of the corresponding adaptive mesh-refinement procedure compared to the contributions \cite{bartels15,BM20,BTW21}, cf. Section \ref{sec:experiments}.\vspace{-0.25mm}\enlargethispage{15mm} 

	\subsection{The discretized Rudin--Osher--Fatemi (ROF) model}\label{subsec:discrete_ROF_model}
	\hspace{5mm}Given $g\in L^2(\Omega)$ and $\alpha>0$, with $g_h\coloneqq  \Pi_hg\in\mathcal{L}^0(\mathcal{T}_h)$, the discretized ROF model, proposed in \cite{CP20}, consists in the minimization of $I^{cr}_h\colon \mathcal{S}^{1,\textit{cr}}(\mathcal{T}_h)\to \mathbb{R}$, for every $v_h\in \mathcal{S}^{1,\textit{cr}}(\mathcal{T}_h)$ defined by 
	\begin{align}
		I^{cr}_h(v_h)\coloneqq  \|\nabla_{\!h}v_h\|_{L^1(\Omega;\mathbb{R}^d)}+\tfrac{\alpha}{2}\|\Pi_hv_h-\alpha\, g_h\|^2_{L^2(\Omega)}\,.\label{discrete-ROF-primal}
	\end{align}
    Note that the functional \eqref{discrete-ROF-primal} defines a non-conforming approximation of the functional \eqref{ROF-primal}, as, e.g., jump terms of across inner element sides are not included. This, however, turned out to be essential in the derivation of optimal a priori error estimate in \cite{CP20,Bar21}.
    Since the functional \eqref{discrete-ROF-primal} is proper, strictly convex, weakly coercive, and lower semi-continuous,
        the direct method in the calculus of variations, cf.\ \cite{Dac08}, yields the existence of a unique minimizer $u_h^{cr}\in \mathcal{S}^{1,\textit{cr}}(\mathcal{T}_h)$, called the \textit{discrete primal solution}. Appealing to \cite{CP20,Bar21},  the corresponding (Fenchel) dual problem to the minimization of \eqref{discrete-ROF-primal} consists in the maximization of the functional $D_h^{rt}\colon\mathcal{R}T^0_N(\mathcal{T}_h)\to \mathbb{R}\cup\{-\infty\}$, for every $y_h\in \mathcal{R}T^0_N(\mathcal{T}_h)$ defined by 
	\begin{align}
		D_h^{rt}(y_h)\coloneqq  -I_{K_1(0)}(\Pi_hy_h)-\tfrac{1}{2\alpha}\|\textup{div}\,y_h+\alpha\,g_h\|_{L^2(\Omega)}^2+\tfrac{\alpha}{2}\|g_h\|_{L^2(\Omega)}^2\,.\label{discrete-ROF-dual}
	\end{align}
    Appealing \hspace{-0.1mm}to \hspace{-0.1mm}Theorem \hspace{-0.1mm}\ref{thm:discrete_ROF_strong_duality} \hspace{-0.1mm}(below), \hspace{-0.1mm}there \hspace{-0.1mm}exists \hspace{-0.1mm}a \hspace{-0.1mm}maximizer \hspace{-0.1mm}$z_h^{rt}\hspace{-0.1em}\in\hspace{-0.1em} \mathcal{R}T^0_N(\mathcal{T}_h)$ \hspace{-0.1mm}of \hspace{-0.1mm}\eqref{discrete-ROF-dual},~\hspace{-0.1mm}which~\hspace{-0.1mm}satisfies $\vert \Pi_h z_h^{rt}\vert \leq 1$ a.e.\ in $\Omega$, a
    discrete strong duality relation applies, i.e.,
    \begin{align}
            I^{cr}_h(u_h^{cr})= D_h^{rt}(z_h^{rt})\,,\label{discrete_ROF_strong_duality_principle}
    \end{align}
    and the discrete convex optimality relations
    \begin{alignat}{2}
            \textup{div}\, z_h^{rt}&=\alpha\, (\Pi_h u_h^{cr}-g_h)&&\quad\textup{ in }\mathcal{L}^0(\mathcal{T}_h)\,,\label{discrete_ROF_optimality-relations.1}\\
            \Pi_hz_h^{rt}\cdot \nabla_{\!h} u_h^{cr}&=\vert \nabla_{\!h} u_h^{cr}\vert&&\quad\textup{ in }\mathcal{L}^0(\mathcal{T}_h)\,.\label{discrete_ROF_optimality-relations.2}
    \end{alignat}\newpage

%
    
    \subsection{The regularized, discretized Rudin--Osher--Fatemi model}

    \hspace{5mm}To approximate a discrete minimizer $u_h^{cr}\in \mathcal{S}^{1,\textit{cr}}(\mathcal{T}_h)$ of \eqref{discrete-ROF-primal}, it is common to approximate 
    the modulus function by strictly convex regularizations. In this connection, for every $\varepsilon\in (0,1)$, we define a special regularization $f_\varepsilon\colon \mathbb{R}\to \mathbb{R}_{\ge 0}$ of the modulus function, for every $t\in \mathbb{R}$, via
 \begin{align}\label{def:reg}
  f_\varepsilon(t)\coloneqq  (1-\varepsilon)\, \vert t\vert_\varepsilon\,,\qquad
  \vert t\vert_\varepsilon\coloneqq (t^2+\varepsilon^2)^{\frac{1}{2}}\,,
 \end{align} 
 where $\vert \cdot\vert_\varepsilon\colon \mathbb{R}\to \mathbb{R}_{\ge 0}$ is commonly referred to as the \textit{standard regularization}.\enlargethispage{7mm}
 
 Let us collect the most important properties of the regularization \eqref{def:reg}.

 \begin{lemma}\label{lem:properties}
  For every $\varepsilon\in (0,1)$, the following statements apply:
  \begin{itemize}[noitemsep,topsep=2pt,leftmargin=!,labelwidth=\widthof{(iii)}]
  \item[(i)] $f_\varepsilon\in C^1(\mathbb{R})$ with $f_\varepsilon'(0)=0$.
  \item[(ii)] For every $t\in \mathbb{R}$, it holds $-\varepsilon\, \vert t\vert-\varepsilon^2\leq f_\varepsilon(t)-\vert t\vert\leq \varepsilon\, (1-\vert t\vert)$.  
  \item[(iii)] For every $t\in \mathbb{R}$, it holds $\vert f_\varepsilon'(t)\vert \leq 1-\varepsilon$.
  \item[(iv)] For every $s\in \mathbb{R}$, it holds 
  \begin{align*}
        f_\varepsilon^*(s)\coloneqq\begin{cases}
           \smash{-\varepsilon \,((1-\varepsilon)^2-\vert s\vert^2)^{\frac{1}{2}}}&\quad\text{ if } \vert s\vert\leq  1-\varepsilon\\
          +\infty&\quad\text{ if } \vert s\vert> 1-\varepsilon
      \end{cases}\,.
  \end{align*}
  \end{itemize}
 \end{lemma}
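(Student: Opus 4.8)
The plan is to establish the four items in order, each being a short computation built on the closed form $f_\varepsilon=(1-\varepsilon)\,\vert\cdot\vert_\varepsilon$ with $\vert t\vert_\varepsilon=(t^2+\varepsilon^2)^{1/2}$. For (i), I would note that $t\mapsto t^2+\varepsilon^2$ is strictly positive on all of $\mathbb{R}$ (since $\varepsilon>0$), so the square root, and hence $f_\varepsilon$, is $C^\infty(\mathbb{R})\subseteq C^1(\mathbb{R})$. Differentiating yields $f_\varepsilon'(t)=(1-\varepsilon)\,t/\vert t\vert_\varepsilon$, and evaluating at $t=0$ gives $f_\varepsilon'(0)=0$, because the numerator vanishes while the denominator equals $\varepsilon>0$.

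The tool for (ii) and (iii) is the elementary two-sided estimate $\vert t\vert\le\vert t\vert_\varepsilon\le\vert t\vert+\varepsilon$, where the left inequality is immediate and the right one is $\sqrt{a^2+b^2}\le\vert a\vert+\vert b\vert$ with $a=t$, $b=\varepsilon$. For the upper bound in (ii) I would insert $\vert t\vert_\varepsilon\le\vert t\vert+\varepsilon$ directly into $f_\varepsilon(t)-\vert t\vert=(1-\varepsilon)\vert t\vert_\varepsilon-\vert t\vert$ and simplify to $\varepsilon(1-\vert t\vert)-\varepsilon^2\le\varepsilon(1-\vert t\vert)$; for the lower bound I would rewrite $f_\varepsilon(t)-\vert t\vert=(\vert t\vert_\varepsilon-\vert t\vert)-\varepsilon\vert t\vert_\varepsilon$, drop the nonnegative first term, and bound $\varepsilon\vert t\vert_\varepsilon\le\varepsilon(\vert t\vert+\varepsilon)$. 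Statement (iii) is then immediate from the formula for $f_\varepsilon'$ in (i) together with $\vert t\vert/\vert t\vert_\varepsilon\le 1$, i.e.\ $\vert t\vert\le\vert t\vert_\varepsilon$.

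The main work, and the step I expect to be the principal obstacle, is (iv): the explicit Fenchel conjugate $f_\varepsilon^*(s)=\sup_{t\in\mathbb{R}}(st-f_\varepsilon(t))$. First I would dispose of the case $\vert s\vert>1-\varepsilon$: using $\vert t\vert_\varepsilon\le\vert t\vert+\varepsilon$ one gets $st-f_\varepsilon(t)\ge st-(1-\varepsilon)(\vert t\vert+\varepsilon)$, and letting $t\to+\infty$ (resp.\ $t\to-\infty$) when $s>1-\varepsilon$ (resp.\ $s<-(1-\varepsilon)$) drives the right-hand side to $+\infty$, so $f_\varepsilon^*(s)=+\infty$. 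For $\vert s\vert\le1-\varepsilon$ the smooth objective is coercive (it tends to $-\infty$ at $\pm\infty$ when $\vert s\vert<1-\varepsilon$), so the supremum is attained at the unique stationary point solving $s=(1-\varepsilon)\,t/\vert t\vert_\varepsilon$. Writing $r\coloneqq s/(1-\varepsilon)\in[-1,1]$, this gives $t^2=r^2\varepsilon^2/(1-r^2)$ and hence $\vert t\vert_\varepsilon=\varepsilon/(1-r^2)^{1/2}$; substituting back and simplifying produces the value $-\varepsilon(1-\varepsilon)(1-r^2)^{1/2}=-\varepsilon((1-\varepsilon)^2-\vert s\vert^2)^{1/2}$, as claimed. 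The one delicate point is the boundary $\vert s\vert=1-\varepsilon$, where coercivity fails and the supremum is not attained; there a direct limit $t\to\pm\infty$ shows it equals $0$, which coincides with the claimed formula evaluated at $\vert s\vert=1-\varepsilon$, so the stated expression is correct throughout $\{\vert s\vert\le1-\varepsilon\}$.
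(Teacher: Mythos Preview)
Your proof is correct. Parts (i)--(iii) are essentially identical to the paper's argument: both use the explicit derivative $f_\varepsilon'(t)=(1-\varepsilon)\,t/\vert t\vert_\varepsilon$ and the two-sided bound $\vert t\vert\le\vert t\vert_\varepsilon\le\vert t\vert+\varepsilon$ in the same way.

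The only genuine difference is in (iv). The paper does not compute the supremum directly; instead it invokes the scaling rule for Fenchel conjugates, $(\lambda g)^*(s)=\lambda\, g^*(s/\lambda)$ (cited from Bauschke--Combettes), applied with $\lambda=1-\varepsilon$ and $g=\vert\cdot\vert_\varepsilon$, together with the known formula $(\vert\cdot\vert_\varepsilon)^*(s)=-\varepsilon(1-\vert s\vert^2)^{1/2}$ for $\vert s\vert\le 1$ and $+\infty$ otherwise. Your route---solving the stationarity condition $s=(1-\varepsilon)t/\vert t\vert_\varepsilon$ explicitly and handling the boundary $\vert s\vert=1-\varepsilon$ by a limit---is more hands-on and fully self-contained, at the cost of a few extra lines; the paper's route is shorter but relies on an external reference for both the scaling rule and the conjugate of the standard regularization. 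Either way the content is the same, and your treatment of the boundary case (where the supremum is not attained) is a point the paper glosses over.
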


 \begin{remark}
     The main reason to consider the regularization $f_\varepsilon\colon \mathbb{R}\to \mathbb{R}_{\ge 0}$ instead of the standard regularization $\vert \cdot\vert_\varepsilon\colon \mathbb{R}\to \mathbb{R}_{\ge 0}$ consists in the property (iii) in Lemma \ref{lem:properties}.~This~additional slope reduction enables us later to construct a sufficiently accurate, admissible approximation of the dual solution using an additional projection step, cf.\ Remark \ref{rem:proj} (below) and Section \ref{sec:experiments} (below).
 \end{remark}

 \begin{proof}
  \textit{ad (i).} The claimed regularity $f_\varepsilon\in C^1(\mathbb{R})$ is evident. Since for every $t\in \mathbb{R}$, it holds
  \begin{align}
      f_\varepsilon'(t)=(1-\varepsilon)\,\tfrac{t}{(t^2+\varepsilon^2)^{\frac{1}{2}}}\,,\label{lem:properties.1}
  \end{align}
  we have that $f_\varepsilon'(0)=0$.
  
  \textit{ad (ii).} For every $t\in \mathbb{R}$, due to $0\leq \vert t\vert_\varepsilon-\vert t\vert\leq \varepsilon$, we have that
  \begin{align*}
    -\varepsilon\, \vert t\vert-\varepsilon^2\leq  -\varepsilon\, \vert t\vert_\varepsilon \leq f_\varepsilon(t)-\vert t\vert=\varepsilon-\varepsilon\,\vert t\vert_\varepsilon\leq \varepsilon\, (1-\vert t\vert)\,.
  \end{align*}

  \textit{ad (iii).} Immediate consequence of the representation \eqref{lem:properties.1}.

  \textit{ad (iv).} Due to \cite[Proposition 13.20 (i)]{BC11}, for every $s\in \mathbb{R}$ and $\varepsilon\in (0,1)$, we have that
  \begin{align*}
      f_\varepsilon^*(s)=((1-\varepsilon)\,\vert \cdot\vert_\varepsilon)^*(s)=(1-\varepsilon)\,(\vert \cdot\vert_\varepsilon)^*\big(\tfrac{s}{1-\varepsilon}\big)\,.
  \end{align*}
  Since for every $s\in \mathbb{R}$ and $\varepsilon\in (0,1)$, it holds 
  \begin{align*}
      (\vert \cdot\vert_\varepsilon)^*\big(s)=
      \begin{cases}
           -\varepsilon \,(1-\vert s\vert^2)^{\frac{1}{2}}&\quad\text{ if } \vert s\vert\leq  1\\
          +\infty&\quad\text{ if } \vert s\vert> 1
      \end{cases}\,,
  \end{align*}
  we conclude that
  the claimed representation of the Fenchel conjugate applies.
  \end{proof}
  
    Given $g\in  L^2(\Omega)$, $\alpha> 0$, and an element-wise constant regularization parameter $\varepsilon_h\in  \mathcal{L}^0(\mathcal{T}_h)$ with $0<\varepsilon_h<1$ a.e.\ in $\Omega$,~for~${g_h\coloneqq \Pi_hg\in \mathcal{L}^0(\mathcal{T}_h)}$,~the regularized, discrete ROF model consists in the minimization of the functional  ${I^{cr}_{h,\varepsilon_h}\colon \hspace{-0.1em}\mathcal{S}^{1,\textit{cr}}(\mathcal{T}_h)\hspace{-0.1em}\to\hspace{-0.1em} \mathbb{R}}$, for every $v_h\in \mathcal{S}^{1,\textit{cr}}(\mathcal{T}_h)$ defined by 
	\begin{align}\label{discrete-ROF-primal_reg}
		I^{cr}_{h,\varepsilon_h}(v_h)\coloneqq  \|f_{\varepsilon_h}(\vert \nabla_{\!h}v_h\vert)\|_{L^1(\Omega)}+\tfrac{\alpha}{2}\|\Pi_hv_h-g_h\|^2_{L^2(\Omega)}\,.
	\end{align}
    Since the functional \eqref{discrete-ROF-primal_reg} is proper, strictly convex, weakly coercive, and lower semi-continuous,
        the direct method in the calculus of variations, cf.\ \cite{Dac08}, yields the existence of a unique minimizer $u_{h,\varepsilon_h}^{cr}\in \mathcal{S}^{1,\textit{cr}}(\mathcal{T}_h)$, called the \textit{regularized, discrete primal solution}. 
    Appealing to $(f_{\varepsilon_h}\circ \vert \cdot\vert)^*=f_{\varepsilon_h}^*\circ \vert \cdot\vert$, cf.\ \cite[Example 13.7]{BC11}, the corresponding (Fenchel) dual problem to the minimization of \eqref{discrete-ROF-primal} consists in the maximization of functional $D_{h,\varepsilon_h}^{rt}\colon\mathcal{R}T^0_N(\mathcal{T}_h)\to \mathbb{R}\cup\{-\infty\}$, for every $y_h\in \mathcal{R}T^0_N(\mathcal{T}_h)$ defined by 
	\begin{align}
		D_{h,\varepsilon_h}^{rt}(y_h)\coloneqq  -\int_{\Omega}{f_{\varepsilon_h}^*(\vert \Pi_hy_h\vert )\,\mathrm{d}x}-\tfrac{1}{2\alpha}\|\textup{div}\,y_h+\alpha\,g_h\|_{L^2(\Omega)}^2+\tfrac{\alpha}{2}\|g_h\|_{L^2(\Omega)}^2\,.\label{discrete-ROF-dual_reg}
	\end{align}
    
    The following proposition clarifies the well-posedness of the dual regularized, discretized ROF model, i.e., the existence of a maximizer of \eqref{discrete-ROF-dual_reg}. It also yields a discrete reconstruction formula for a maximizer of \eqref{discrete-ROF-dual_reg} from a minimizer of \eqref{discrete-ROF-primal_reg} and proves discrete~strong~duality.

     \begin{proposition}\label{prop:discrete_convex_duality}
 The following statements apply:
 \begin{itemize}[noitemsep,topsep=1pt,labelwidth=\widthof{(ii)},leftmargin=!]
 \item[(i)] A discrete weak duality relation applies, i.e.,
 \begin{align}
  \inf_{v_h\in \mathcal{S}^{1,\textit{cr}}_D(\mathcal{T}_h)}{I_{h,\varepsilon_h}^{\textit{cr}}(v_h)}\ge \sup_{y_h\in \mathcal{R}T^0_N(\mathcal{T}_h)}{D_{h,\varepsilon_h}^{\textit{rt}}(y_h)}\,.\label{discrete_weak_duality_reg}
 \end{align}
  \item[(ii)] The discrete flux $z_h^{\textit{rt}}\in \mathcal{L}^1(\mathcal{T}_h)$, defined via the generalized Marini formula
  \begin{align}
        z_{h,\varepsilon_h}^{rt}\coloneqq  \tfrac{f_{\varepsilon_h}'(\vert \nabla_{\!h} u_{h,\varepsilon_h}^{cr}\vert)}{\vert \nabla_{\!h} u_{h,\varepsilon_h}^{cr}\vert}\nabla_{\!h} u_{h,\varepsilon_h}^{cr}+\alpha\tfrac{\Pi_h u_{h,\varepsilon_h}^{cr}-g_h}{d}\big(\textup{id}_{\mathbb{R}^d}-\Pi_h\textup{id}_{\mathbb{R}^d}\big)\,,\label{def:reg_marini}
    \end{align}
 satisfies $z_{h,\varepsilon_h}^{\textit{rt}}\in \mathcal{R}T^0_N(\mathcal{T}_h)$ and the discrete convex optimality relations
 \begin{alignat}{2}
		\textup{div}\,z_{h,\varepsilon_h}^{rt}&=\alpha\,(\Pi_hu_{h,\varepsilon_h}^{cr}-g_h)&&\quad\text{ in }\mathcal{L}^0(\mathcal{T}_h)\,,\label{eq:pDirichletOptimalityCR1.1}\\
 \Pi_h z_{h,\varepsilon_h}^{rt}&=\tfrac{f_{\varepsilon_h}'(\vert \nabla_{\! h} u_{h,\varepsilon_h}^{\textit{cr}}\vert )}{\vert \nabla_{\! h} u_{h,\varepsilon_h}^{\textit{cr}}\vert }\nabla_{\! h} u_{h,\varepsilon_h}^{\textit{cr}}&&\quad\text{ in }\mathcal{L}^0(\mathcal{T}_h)^d\,.\label{eq:pDirichletOptimalityCR1.2}
	\end{alignat}
 \item[(iii)] The discrete flux $z_h^{\textit{rt}}\in \mathcal{R}T^0_N(\mathcal{T}_h)$ is a maximizer of \eqref{discrete-ROF-dual_reg} and discrete~strong~duality applies, i.e., 
\begin{align*}
        I^{cr}_{h,\varepsilon_h}(u_{h,\varepsilon_h}^{cr})=D_{h,\varepsilon_h}^{rt}(z_{h,\varepsilon_h}^{rt})\,.
    \end{align*}
 \end{itemize}
 \end{proposition}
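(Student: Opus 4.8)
The plan is to recognize the regularized, discretized ROF pair as a special instance of the abstract discrete primal--dual framework of Section~\ref{sec:discrete_duality} and to feed in the elementary computations that identify the data. Throughout I set $\phi\coloneqq f_{\varepsilon_h}\circ\vert\cdot\vert\colon\mathbb{R}^d\to\mathbb{R}$ and $\psi_h(x,t)\coloneqq\tfrac{\alpha}{2}(t-g_h(x))^2$. By Lemma~\ref{lem:properties}~(i) together with \eqref{lem:properties.1}, a direct computation gives $D\phi(a)=(1-\varepsilon_h)\,a\,(\vert a\vert^2+\varepsilon_h^2)^{-1/2}=\tfrac{f_{\varepsilon_h}'(\vert a\vert)}{\vert a\vert}\,a$ for all $a\in\mathbb{R}^d$, so that $\phi\in C^1(\mathbb{R}^d)$, while $\psi_h(x,\cdot)\in C^1(\mathbb{R})$ with $D\psi_h(x,t)=\alpha(t-g_h(x))$. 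Using $(f_{\varepsilon_h}\circ\vert\cdot\vert)^*=f_{\varepsilon_h}^*\circ\vert\cdot\vert$, cf.\ \cite[Example 13.7]{BC11}, and the elementary identity $\psi_h^*(x,s)=\tfrac{1}{2\alpha}s^2+g_h(x)\,s$, one checks that \eqref{discrete-ROF-primal_reg} and \eqref{discrete-ROF-dual_reg} coincide with the abstract functionals \eqref{discrete_primal} and \eqref{discrete_dual}; indeed expanding $-\tfrac{1}{2\alpha}\|\textup{div}\,y_h+\alpha g_h\|_{L^2(\Omega)}^2+\tfrac{\alpha}{2}\|g_h\|_{L^2(\Omega)}^2=-\tfrac{1}{2\alpha}\|\textup{div}\,y_h\|_{L^2(\Omega)}^2-(g_h,\textup{div}\,y_h)_\Omega$ exhibits the quadratic conjugate.

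\emph{ad (i).} With the identification above, the claimed weak duality \eqref{discrete_weak_duality_reg} is precisely the abstract relation \eqref{eq:discrete_weak_duality}. For completeness I would also prove it directly: for $v_h\in\mathcal{S}^{1,cr}_D(\mathcal{T}_h)$ and $y_h\in\mathcal{R}T^0_N(\mathcal{T}_h)$, the Fenchel--Young inequality \eqref{eq:fenchel_young_ineq} applied to $\phi$ and to $\psi_h$, combined with the Cauchy--Schwarz inequality, yields the two pointwise bounds $f_{\varepsilon_h}(\vert\nabla_{\!h}v_h\vert)+f_{\varepsilon_h}^*(\vert\Pi_h y_h\vert)\ge\nabla_{\!h}v_h\cdot\Pi_h y_h$ and $\tfrac{\alpha}{2}(\Pi_h v_h-g_h)^2+\tfrac{1}{2\alpha}(\textup{div}\,y_h)^2+g_h\,\textup{div}\,y_h\ge\Pi_h v_h\,\textup{div}\,y_h$. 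Integrating over $\Omega$, adding, and applying the discrete integration-by-parts formula \eqref{eq:pi0} makes the two right-hand sides cancel, leaving $I^{cr}_{h,\varepsilon_h}(v_h)-D^{rt}_{h,\varepsilon_h}(y_h)\ge 0$.

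\emph{ad (ii).} The right-hand side of \eqref{def:reg_marini} is exactly the generalized Marini formula \eqref{eq:reconstruction_formula.1} evaluated for the present data $\phi,\psi_h$, since $D\phi(\nabla_{\!h}u^{cr}_{h,\varepsilon_h})=\tfrac{f_{\varepsilon_h}'(\vert\nabla_{\!h}u^{cr}_{h,\varepsilon_h}\vert)}{\vert\nabla_{\!h}u^{cr}_{h,\varepsilon_h}\vert}\nabla_{\!h}u^{cr}_{h,\varepsilon_h}$ and $D\psi_h(\cdot,\Pi_h u^{cr}_{h,\varepsilon_h})=\alpha(\Pi_h u^{cr}_{h,\varepsilon_h}-g_h)$. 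As $\phi\in C^1(\mathbb{R}^d)$ and $\psi_h(x,\cdot)\in C^1(\mathbb{R})$, and as $u^{cr}_{h,\varepsilon_h}$ minimizes \eqref{discrete-ROF-primal_reg}, Proposition~\ref{prop:gen_marini}~(i) applies and yields both $z^{rt}_{h,\varepsilon_h}\in\mathcal{R}T^0_N(\mathcal{T}_h)$ and the optimality relations. The two relations themselves follow from a short computation: writing $z^{rt}_{h,\varepsilon_h}=a_h+\tfrac{c_h}{d}(\textup{id}_{\mathbb{R}^d}-\Pi_h\textup{id}_{\mathbb{R}^d})$ with the element-wise constant fields $a_h\coloneqq D\phi(\nabla_{\!h}u^{cr}_{h,\varepsilon_h})$ and $c_h\coloneqq\alpha(\Pi_h u^{cr}_{h,\varepsilon_h}-g_h)$, the idempotency $\Pi_h(\textup{id}_{\mathbb{R}^d}-\Pi_h\textup{id}_{\mathbb{R}^d})=0$ gives $\Pi_h z^{rt}_{h,\varepsilon_h}=a_h$, i.e.\ \eqref{eq:pDirichletOptimalityCR1.2}, while the element-wise divergence of $(\textup{id}_{\mathbb{R}^d}-\Pi_h\textup{id}_{\mathbb{R}^d})$ equals $d$, giving $\textup{div}\,z^{rt}_{h,\varepsilon_h}=c_h$, i.e.\ \eqref{eq:pDirichletOptimalityCR1.1}.

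\emph{ad (iii).} Finally I would upgrade weak to strong duality. Applying the Fenchel--Young identity \eqref{eq:fenchel_young_id} to \eqref{eq:pDirichletOptimalityCR1.2} (for $\phi$) and to \eqref{eq:pDirichletOptimalityCR1.1} (for $\psi_h$) turns the optimality relations into the pointwise equalities $\Pi_h z^{rt}_{h,\varepsilon_h}\cdot\nabla_{\!h}u^{cr}_{h,\varepsilon_h}=f_{\varepsilon_h}^*(\vert\Pi_h z^{rt}_{h,\varepsilon_h}\vert)+f_{\varepsilon_h}(\vert\nabla_{\!h}u^{cr}_{h,\varepsilon_h}\vert)$ and $\textup{div}\,z^{rt}_{h,\varepsilon_h}\,\Pi_h u^{cr}_{h,\varepsilon_h}=\psi_h^*(\cdot,\textup{div}\,z^{rt}_{h,\varepsilon_h})+\psi_h(\cdot,\Pi_h u^{cr}_{h,\varepsilon_h})$. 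Integrating both over $\Omega$, summing, and eliminating the left-hand sides through the discrete integration-by-parts formula \eqref{eq:pi0} yields $I^{cr}_{h,\varepsilon_h}(u^{cr}_{h,\varepsilon_h})=D^{rt}_{h,\varepsilon_h}(z^{rt}_{h,\varepsilon_h})$; together with the weak duality of part~(i) this forces $z^{rt}_{h,\varepsilon_h}$ to be a maximizer of \eqref{discrete-ROF-dual_reg}. The only genuinely non-trivial ingredient is the membership $z^{rt}_{h,\varepsilon_h}\in\mathcal{R}T^0_N(\mathcal{T}_h)$ in part~(ii) --- i.e.\ the continuity of the normal trace across interior sides and its vanishing on $\Gamma_N$, which is invisible from the pointwise formula and rests on the Euler--Lagrange equation for $u^{cr}_{h,\varepsilon_h}$ as exploited in Proposition~\ref{prop:gen_marini}~(i); everything else is routine convex-duality bookkeeping.
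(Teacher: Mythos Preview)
Your argument is correct and uses the same ingredients as the paper --- Fenchel--Young (in)equalities, the discrete integration-by-parts formula \eqref{eq:pi0}, and the Euler--Lagrange equation for $u_{h,\varepsilon_h}^{cr}$ --- so parts (i) and (iii) are essentially identical to the paper's treatment.

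The only noteworthy difference is in part (ii): you dispatch the membership $z_{h,\varepsilon_h}^{rt}\in\mathcal{R}T^0_N(\mathcal{T}_h)$ by invoking the abstract Proposition~\ref{prop:gen_marini}~(i), whereas the paper redoes the argument from scratch (finding an auxiliary $y_h\in\mathcal{R}T^0_N(\mathcal{T}_h)$ with the same divergence, then showing $y_h-z_{h,\varepsilon_h}^{rt}\in\nabla_{\!h}(\mathcal{S}^{1,cr}_D(\mathcal{T}_h))^\perp=\ker(\textup{div}|_{\mathcal{R}T^0_N(\mathcal{T}_h)})$ via \eqref{eq:decomposition.2}). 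The direct route buys two things that your reduction glosses over. First, the abstract framework of Section~\ref{sec:discrete_duality} is stated for an $x$-independent $\phi\colon\mathbb{R}^d\to\mathbb{R}\cup\{+\infty\}$, while here $\varepsilon_h\in\mathcal{L}^0(\mathcal{T}_h)$ makes $f_{\varepsilon_h}\circ|\cdot|$ element-wise varying; the extension is trivial (everything is element-wise), but your writing ``$\phi\coloneqq f_{\varepsilon_h}\circ|\cdot|\colon\mathbb{R}^d\to\mathbb{R}$'' hides this. Second, in the present setting $\Gamma_D=\emptyset$, so $\textup{div}\colon\mathcal{R}T^0_N(\mathcal{T}_h)\to\mathcal{L}^0(\mathcal{T}_h)$ is only surjective modulo constants; the paper makes explicit that testing the Euler--Lagrange equation with $v_h=1$ yields $\int_\Omega\alpha(\Pi_h u_{h,\varepsilon_h}^{cr}-g_h)\,\mathrm{d}x=0$, which is what guarantees the auxiliary $y_h$ exists. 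Neither point is a gap in your proof --- both are absorbed in the citation chain behind Proposition~\ref{prop:gen_marini}~(i) --- but the self-contained argument makes them visible.
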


  Note that, by the Fenchel--Young identity, cf.\ \cite[Proposition 5.1, p. 21]{ET99}, \eqref{eq:pDirichletOptimalityCR1.2}~is~equivalent~to
	\begin{align}\label{eq:pDirichletOptimalityCR2}
	  	\Pi_h z_{h,\varepsilon_h}^{rt}\cdot \nabla_{\!h} u_{h,\varepsilon_h}^{cr}&=f_{\varepsilon_h}^*(\vert \Pi_h z_{h,\varepsilon_h}^{rt}\vert )+f_\varepsilon (\vert \nabla_{\!h} u_{h,\varepsilon_h}^{cr}\vert)\quad\text{ in }\mathcal{L}^0(\mathcal{T}_h)\,.
	\end{align}

 \begin{remark}\label{rem:proj}
    Appealing to Lemma \ref{lem:properties} (iii), we have that $\vert \Pi_h z_{h,\varepsilon_h}^{rt}\vert\leq 1-\varepsilon_h$ a.e.\ in $\Omega$. Therefore,
     if $\|\Pi_hu_{h,\varepsilon_h}^{cr}-g_h\|_{L^\infty(\Omega)}\leq c_0$ for some $c_0>0$, which can be expected by discrete maximum principles, then, choosing  
     $\varepsilon_h\coloneqq \frac{\alpha c_0}{d}h$, yields that
     $\|z_{h,\varepsilon_h}^{rt}\|_{L^\infty(\Omega;\mathbb{R}^d)}\leq 1$.~However,~choices~like~${\varepsilon_h\sim h}$ let us expect convergence rates not better than $\mathcal{O}(h^{1/2})$, cf. Proposition~\ref{prop:reg_conv}~(i)~(below). In order to allow for the convergence rate $\mathcal{O}(h)$, one needs to choose $\varepsilon_h\sim h^2$. But,~in~this~case, we cannot guarantee that $\|z_{h,\varepsilon_h}^{rt}\|_{L^\infty(\Omega;\mathbb{R}^d)}\leq 1$, so that we instead consider the scaled vector field $\overline{z}_{h,\varepsilon_h}^{rt}\coloneqq z_{h,\varepsilon_h}^{rt}(\max\{1,\|z_{h,\varepsilon_h}^{rt}\|_{L^\infty(\Omega;\mathbb{R}^d)}\})^{-1}\in \mathcal{R}T^0_N(\mathcal{T}_h)$, which is still a sufficiently accurate approximation of the dual solution, as indicated by the numerical experiments, cf. Section \ref{sec:experiments}.
 \end{remark}
 
    \begin{proof}
 \textit{ad (i).} Using element-wise that $f_{\varepsilon_h}=f_{\varepsilon_h}^{**}$, the definition of the convex conjugate,~cf.~\eqref{def:fenchel}, and the discrete integration-by-parts formula \eqref{eq:pi0},  we find that
 \begin{align*}
  &\inf_{v_h\in \mathcal{S}^{1,\textit{cr}}_D(\mathcal{T}_h)}{I_{h,\varepsilon_h}^{\textit{cr}}(v_h)}=\inf_{v_h\in \mathcal{S}^{1,\textit{cr}}_D(\mathcal{T}_h)}{
  \|f_{\varepsilon_h}^{**}(\vert \nabla_{\! h} v_h\vert)\|_{L^1(\Omega)}+\tfrac{\alpha}{2}\|\Pi_h v_h-g_h\|_{L^2(\Omega)}^2}
 \\
 &\qquad=
  \inf_{v_h\in \mathcal{S}^{1,\textit{cr}}_D(\mathcal{T}_h)}{ \sup_{\overline{y}_h\in \mathcal{L}^0(\mathcal{T}_h)^d}{-\int_{\Omega}{f_{\varepsilon_h}^{*}(\vert \overline{y}_h \vert)\,\mathrm{d}x}+(\overline{y}_h,\nabla_{\! h} v_h)_\Omega+\tfrac{\alpha}{2}\|\Pi_h v_h-g_h\|_{L^2(\Omega)}^2}}
 \\&\qquad\ge 
  \inf_{v_h\in \mathcal{S}^{1,\textit{cr}}_D(\mathcal{T}_h)}{ \sup_{y_h\in \mathcal{R}T^0_N(\mathcal{T}_h)}{-\int_{\Omega}{f_{\varepsilon_h}^{*}(\vert \Pi_h y_h \vert)\,\mathrm{d}x}-(\textup{div}\,y_h,\Pi_h v_h)_\Omega+\tfrac{\alpha}{2}\|\Pi_h v_h-g_h\|_{L^2(\Omega)}^2}}
 \\&\qquad\ge 
  \sup_{y_h\in \mathcal{R}T^0_N(\mathcal{T}_h)}{-\int_{\Omega}{f_{\varepsilon_h}^{*}(\vert \Pi_h y_h \vert)\,\mathrm{d}x}-\sup_{\overline{v}_h\in \mathcal{L}^0(\mathcal{T}_h)}{(\textup{div}\,y_h,\overline{v}_h)_\Omega-\tfrac{\alpha}{2}\|\overline{v}_h-g_h\|_{L^2(\Omega)}^2}}
 \\&\qquad=
  \sup_{y_h\in \mathcal{R}T^0_N(\mathcal{T}_h)}{-\int_{\Omega}{f_{\varepsilon_h}^{*}(\vert \Pi_h y_h \vert)\,\mathrm{d}x}-\tfrac{1}{2\alpha}\|\textup{div}\,y_h+\alpha\, g_h\|_{L^2(\Omega)}^2+\tfrac{\alpha}{2}\|g_h\|_{L^2(\Omega)}^2}
 \\&\qquad=
  \sup_{y_h\in \mathcal{R}T^0_N(\mathcal{T}_h)}{D_{h,\varepsilon_h}^{\textit{rt}}(y_h)}\,,
 \end{align*}
which is the claimed discrete weak duality relation.

 \textit{ad (ii).} By Lemma \ref{lem:properties}, the minimality of $u_{h,\varepsilon_h}^{\textit{cr}}\in \mathcal{S}^{1,cr}(\mathcal{T}_h)$ for \eqref{discrete-ROF-primal_reg}, for every $v_h\in \mathcal{S}^{1,cr}(\mathcal{T}_h)$, yields that
 \begin{align}\label{prop:discrete_convex_duality.1}
     \Big(f_{\varepsilon_h}'(\vert \nabla_{\! h} u_{h,\varepsilon_h}^{\textit{cr}}\vert )\tfrac{\nabla_{\! h} u_{h,\varepsilon_h}^{\textit{cr}}}{\vert \nabla_{\! h} u_{h,\varepsilon_h}^{\textit{cr}}\vert },\nabla_{\! h} v_h\Big)_\Omega+\alpha\,(\Pi_hu_{h,\varepsilon_h}^{\textit{cr}}-g_h,\Pi_h v_h)_\Omega=0\,.
 \end{align}
 By definition, the discrete flux $z_{h,\varepsilon_h}^{\textit{rt}}\in \mathcal{L}^1(\mathcal{T}_h)^d$, defined by \eqref{def:reg_marini}, satisfies the discrete convex optimality condition \eqref{eq:pDirichletOptimalityCR1.2} and $\textup{div}\,(z_{h,\varepsilon_h}^{\textit{rt}}|_T)=\alpha\,(\Pi_hu_{h,\varepsilon_h}^{\textit{cr}}-g_h)|_T$ in $T$ for all $T\in \mathcal{T}_h$. 
    Choosing $v_h=1\in \mathcal{S}^{1,cr}(\mathcal{T}_h)$ in \eqref{prop:discrete_convex_duality.1}, we find that $\int_{\Omega}{\alpha\,(\Pi_hu_{h,\varepsilon_h}^{\textit{cr}}-g_h)\,\mathrm{d}x}=0$.
 Hence, since for $\Gamma_D=\emptyset$ the divergence operator $\textup{div}\colon 
 \mathcal{R}T^0_N(\mathcal{T}_h)\to \mathcal{L}^0(\mathcal{T}_h)/\mathbb{R}$ is surjective, there exists
  $y_h\in \mathcal{R}T^0_N(\mathcal{T}_h)$ such that $\textup{div}\, y_h=\alpha\,(\Pi_hu_{h,\varepsilon_h}^{\textit{cr}}-g_h) $ in $\mathcal{L}^0(\mathcal{T}_h)$. Then, we have that $\textup{div}\,((z_{h,\varepsilon_h}^{\textit{rt}}-y_h)|_T)=0$ in $T$ for all $T\in \mathcal{T}_h$, i.e., $z_{h,\varepsilon_h}^{\textit{rt}}-y_h\in \mathcal{L}^0(\mathcal{T}_h)^d$. In addition, for every $v_h\in \mathcal{S}^{1,\textit{cr}}(\mathcal{T}_h)$, it holds
 \begin{align*}
  \begin{aligned}
  (\Pi_h y_h,\nabla_{\! h} v_h)_\Omega&=-(\textup{div}\, y_h,\Pi_h v_h)_\Omega\\&=-\alpha\,(\Pi_hu_{h,\varepsilon_h}^{\textit{cr}}-g_h,\Pi_h v_h)_\Omega\\&=\Big(f_{\varepsilon_h}'(\vert \nabla_{\! h} u_{h,\varepsilon_h}^{\textit{cr}}\vert )\tfrac{\nabla_{\! h} u_{h,\varepsilon_h}^{\textit{cr}}}{\vert \nabla_{\! h} u_{h,\varepsilon_h}^{\textit{cr}}\vert },\nabla_{\! h} v_h\Big)_\Omega
 \\& =(\Pi_h z_{h,\varepsilon_h}^{\textit{rt}},\nabla_{\! h} v_h)_\Omega\,.
 \end{aligned}
 \end{align*}
 In other words, for every $v_h\in \mathcal{S}^{1,\textit{cr}}(\mathcal{T}_h)$, it holds
 \begin{align*}
  (y_h-z_{h,\varepsilon_h}^{\textit{rt}},\nabla_{\! h} v_h)_\Omega=(\Pi_h y_h-\Pi_h z_{h,\varepsilon_h}^{\textit{rt}},\nabla_{\! h} v_h)_\Omega=0\,,
 \end{align*}
 i.e., $y_h-z_{h,\varepsilon_h}^{\textit{rt}}\in \nabla_{\! h}(\mathcal{S}^{1,\textit{cr}}_D(\mathcal{T}_h))^{\perp}$. By the decomposition \eqref{eq:decomposition.2}, we have that $\nabla_{\! h}(\mathcal{S}^{1,\textit{cr}}_D(\mathcal{T}_h))^{\perp}=\textup{ker}(\textup{div}|_{\mathcal{R}T^0_N(\mathcal{T}_h)})\subseteq \mathcal{R}T^0_N(\mathcal{T}_h)$. 
 As a result, it holds $y_h-z_{h,\varepsilon_h}^{\textit{rt}}\in \mathcal{R}T^0_N(\mathcal{T}_h)$.~Due~to~${y_h\in \mathcal{R}T^0_N(\mathcal{T}_h)}$, we conclude that $z_{h,\varepsilon_h}^{\textit{rt}}\in \mathcal{R}T^0_N(\mathcal{T}_h)$. In particular, now from
 $\textup{div}\,(z_{h,\varepsilon_h}^{\textit{rt}}|_T)=\alpha\,(\Pi_hu_{h,\varepsilon_h}^{\textit{cr}}-g_h)|_T$ in $T$ for all $T\in \mathcal{T}_h$, it follows the discrete optimality condition
  \eqref{eq:pDirichletOptimalityCR1.1}.

 \textit{ad (iii).} Using \eqref{eq:pDirichletOptimalityCR2}, \eqref{eq:pDirichletOptimalityCR1.1}, and the discrete integration-by-parts formula \eqref{eq:pi0},~we~find~that
 \begin{align*}
  I_{h,\varepsilon_h}^{\textit{cr}}(u_{h,\varepsilon_h}^{\textit{cr}})&=
  \|f_{\varepsilon_h}(\vert \nabla_{\! h} u_{h,\varepsilon_h}^{\textit{cr}}\vert)\|_{L^1(\Omega)}+\tfrac{\alpha}{2}\|\Pi_h u_{h,\varepsilon_h}^{\textit{cr}}-g_h\|_{L^2(\Omega)}^2
 \\& =-\int_{\Omega}{f_{\varepsilon_h}^{*}(\vert \Pi_h z_{h,\varepsilon_h}^{\textit{rt}} \vert)\,\mathrm{d}x}+(\Pi_h z_{h,\varepsilon_h}^{\textit{rt}},\nabla_{\! h} u_{h,\varepsilon_h}^{\textit{cr}})_\Omega+\tfrac{1}{2\alpha }\|\textup{div}\,z_{h,\varepsilon_h}^{\textit{rt}}\|_{L^2(\Omega)}^2
 \\& =-\int_{\Omega}{f_{\varepsilon_h}^{*}(\vert \Pi_h z_{h,\varepsilon_h}^{\textit{rt}} \vert)\,\mathrm{d}x}-( \textup{div}\,z_{h,\varepsilon_h}^{\textit{rt}},\Pi_hu_{h,\varepsilon_h}^{\textit{cr}})_\Omega+\tfrac{1}{2\alpha }\|\textup{div}\,z_{h,\varepsilon_h}^{\textit{rt}}\|_{L^2(\Omega)}^2
 \\& =-\int_{\Omega}{f_{\varepsilon_h}^{*}(\vert \Pi_h z_{h,\varepsilon_h}^{\textit{rt}} \vert)\,\mathrm{d}x}-\tfrac{1}{\alpha }( \textup{div}\,z_{h,\varepsilon_h}^{\textit{rt}},\textup{div}\,z_{h,\varepsilon_h}^{\textit{rt}}+\alpha\, g_h)_\Omega+\tfrac{1}{2\alpha }\|\textup{div}\,z_{h,\varepsilon_h}^{\textit{rt}}\|_{L^2(\Omega)}^2
  \\&=-\int_{\Omega}{f_{\varepsilon_h}^{*}(\vert \Pi_h z_{h,\varepsilon_h}^{\textit{rt}} \vert)\,\mathrm{d}x}-\tfrac{1}{2\alpha }\|\textup{div}\,z_{h,\varepsilon_h}^{\textit{rt}}+\alpha\, g_h\|_{L^2(\Omega)}^2
  \\&=D_{h,\varepsilon_h}^{\textit{rt}}(z_{h,\varepsilon_h}^{\textit{rt}})\,,
 \end{align*}
    which is the claimed discrete strong duality relation and, thus, appealing to the discrete weak duality relation \eqref{discrete_weak_duality_reg}, proves the maximality of $z_{h,\varepsilon_h}^{\textit{rt}}\in \mathcal{R}T^0_N(\mathcal{T}_h)$ for \eqref{discrete-ROF-dual_reg}.
 \end{proof}

The \hspace{-0.1mm}following \hspace{-0.1mm}proposition \hspace{-0.1mm}describes \hspace{-0.1mm}the \hspace{-0.1mm}approximative \hspace{-0.1mm}behavior \hspace{-0.1mm}the \hspace{-0.1mm}regularized, \hspace{-0.1mm}discretized~\hspace{-0.1mm}ROF problem towards the (unregularized) discretized ROF problem, given uniform convergence~(to~zero) of the element-wise constant regularization parameter $\varepsilon_h\in \mathcal{L}^0(\mathcal{T}_h)$. In what follows, in the convergence $\|\varepsilon_h\|_{L^\infty(\Omega)}\to 0$, 
the average mesh-size $h>0$ is always fixed.\enlargethispage{2mm}

\begin{proposition}\label{prop:reg_conv}
    If $\|\varepsilon_h\|_{L^\infty(\Omega)}<1$, then the following statements apply:
    \begin{itemize}[noitemsep,topsep=2pt,leftmargin=!,labelwidth=\widthof{(iii)}]
        \item[(i)] It holds $\tfrac{\alpha}{2}\|\Pi_h u_{h,\varepsilon_h}^{cr}-\Pi_hu_h^{cr}\|_{L^2(\Omega)}^2
  \leq \tfrac{\|\varepsilon_h\|_{L^\infty(\Omega)}}{1-\|\varepsilon_h\|_{L^\infty(\Omega)}}\,(\tfrac{\alpha}{2}\,\|g\|_{L^2(\Omega)}^2+2\,\vert \Omega\vert)$.
        \item[(ii)] $\textup{div}\, z_{h,\varepsilon_h}^{rt}\to \alpha\,(\Pi_hu_h^{cr}-g_h)$ in $\mathcal{L}^0(\mathcal{T}_h)$ $(\|\varepsilon_h\|_{L^\infty(\Omega)}\to 0)$.

        \item[(iii)] $f_{\varepsilon_h}^*(\vert \Pi_h z_{h,\varepsilon_h}^{rt}\vert )\to 0$ in $\mathcal{L}^0(\mathcal{T}_h)$ $(\|\varepsilon_h\|_{L^\infty(\Omega)}\to 0)$.

        \item[(iv)] $f_{\varepsilon_h} (\vert \nabla_{\!h} u_{h,\varepsilon_h}^{cr}\vert)\to \nabla_{\!h} u_h^{cr}$ in $\mathcal{L}^0(\mathcal{T}_h)$ $(\|\varepsilon_h\|_{L^\infty(\Omega)}\to 0)$.
    \end{itemize}
 \end{proposition}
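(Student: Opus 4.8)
Throughout I abbreviate $\bar\varepsilon \coloneqq \|\varepsilon_h\|_{L^\infty(\Omega)} \in (0,1)$. The whole proof rests on a two-sided comparison of the regularized and unregularized discrete primal energies. From the definition \eqref{def:reg} one has the elementary pointwise bounds $(1-\varepsilon_h)\,\vert t\vert \le f_{\varepsilon_h}(t) \le \vert t\vert + \varepsilon_h$ for all $t\in\mathbb{R}$ (equivalently Lemma \ref{lem:properties} (ii)), since $\vert t\vert \le \vert t\vert_{\varepsilon_h} \le \vert t\vert + \varepsilon_h$. Composing with $\vert\nabla_{\!h} v_h\vert$, integrating, and noting that the fidelity terms of $I_h^{cr}$ and $I_{h,\varepsilon_h}^{cr}$ coincide, I obtain for every $v_h\in\mathcal{S}^{1,cr}(\mathcal{T}_h)$ the key estimate
\[ (1-\bar\varepsilon)\,I_h^{cr}(v_h) \;\le\; I_{h,\varepsilon_h}^{cr}(v_h) \;\le\; I_h^{cr}(v_h) + \bar\varepsilon\,\vert\Omega\vert , \]
where in the lower bound I discard the non-negative term $\bar\varepsilon\,\tfrac{\alpha}{2}\|\Pi_h v_h - g_h\|_{L^2(\Omega)}^2$.

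For (i), I use that $u_{h,\varepsilon_h}^{cr}$ minimizes the $C^1$ functional $I_{h,\varepsilon_h}^{cr}$, whose fidelity part is $\alpha$-strongly convex in $\Pi_h(\cdot)$ while the regularized total-variation part is convex; the Euler--Lagrange identity together with the exact quadratic expansion of the fidelity term yields the strong-convexity estimate
\[ \tfrac{\alpha}{2}\,\|\Pi_h(v_h - u_{h,\varepsilon_h}^{cr})\|_{L^2(\Omega)}^2 \;\le\; I_{h,\varepsilon_h}^{cr}(v_h) - I_{h,\varepsilon_h}^{cr}(u_{h,\varepsilon_h}^{cr}) \qquad\text{for all } v_h\in\mathcal{S}^{1,cr}(\mathcal{T}_h). \]
Applying this with $v_h = u_h^{cr}$, bounding $I_{h,\varepsilon_h}^{cr}(u_h^{cr})$ by the upper comparison and $I_{h,\varepsilon_h}^{cr}(u_{h,\varepsilon_h}^{cr}) \ge (1-\bar\varepsilon)\,I_h^{cr}(u_{h,\varepsilon_h}^{cr}) \ge (1-\bar\varepsilon)\,I_h^{cr}(u_h^{cr})$ (the lower comparison and minimality of $u_h^{cr}$), gives $\tfrac{\alpha}{2}\|\Pi_h(u_{h,\varepsilon_h}^{cr}-u_h^{cr})\|_{L^2(\Omega)}^2 \le \bar\varepsilon\,(I_h^{cr}(u_h^{cr}) + \vert\Omega\vert)$. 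Since $I_h^{cr}(u_h^{cr}) \le I_h^{cr}(0) = \tfrac{\alpha}{2}\|g_h\|_{L^2(\Omega)}^2 \le \tfrac{\alpha}{2}\|g\|_{L^2(\Omega)}^2$, this proves (i) (in fact with the sharper constant $\bar\varepsilon$ and $\vert\Omega\vert$, which a fortiori yields the stated bound).

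Statement (ii) is now immediate: by the optimality relation \eqref{eq:pDirichletOptimalityCR1.1}, $\textup{div}\,z_{h,\varepsilon_h}^{rt} - \alpha(\Pi_h u_h^{cr}-g_h) = \alpha\,\Pi_h(u_{h,\varepsilon_h}^{cr}-u_h^{cr})$, whose $L^2$-norm vanishes as $\bar\varepsilon\to 0$ by (i); on the fixed finite-dimensional space $\mathcal{L}^0(\mathcal{T}_h)$ this is exactly the asserted convergence. For (iii), the optimality relation \eqref{eq:pDirichletOptimalityCR1.2} and Lemma \ref{lem:properties} (iii) give $\vert\Pi_h z_{h,\varepsilon_h}^{rt}\vert = \vert f_{\varepsilon_h}'(\vert\nabla_{\!h}u_{h,\varepsilon_h}^{cr}\vert)\vert \le 1-\varepsilon_h$ a.e.\ (cf.\ Remark \ref{rem:proj}), so the formula for $f_{\varepsilon_h}^*$ in Lemma \ref{lem:properties} (iv) applies and yields $0 \le -f_{\varepsilon_h}^*(\vert\Pi_h z_{h,\varepsilon_h}^{rt}\vert) = \varepsilon_h\,((1-\varepsilon_h)^2 - \vert\Pi_h z_{h,\varepsilon_h}^{rt}\vert^2)^{1/2} \le \varepsilon_h \le \bar\varepsilon$, i.e.\ uniform convergence to $0$.

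Statement (iv) (whose right-hand side I read as $\vert\nabla_{\!h}u_h^{cr}\vert$) is the crux. The two-sided comparison also forces energy convergence $I_{h,\varepsilon_h}^{cr}(u_{h,\varepsilon_h}^{cr}) \to I_h^{cr}(u_h^{cr})$, and since the fidelity parts converge by (i), so do the regularized total-variation energies: $\|f_{\varepsilon_h}(\vert\nabla_{\!h}u_{h,\varepsilon_h}^{cr}\vert)\|_{L^1(\Omega)} \to \|\nabla_{\!h}u_h^{cr}\|_{L^1(\Omega)}$. The difficulty is that convergence of these integrals does not by itself give $\mathcal{L}^0$-convergence of the integrands, because the total-variation term is not strongly convex in the gradient; this is the main obstacle. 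I would overcome it by a compactness-and-uniqueness argument on the fixed mesh. From $f_{\varepsilon_h}(t)\ge(1-\bar\varepsilon)\vert t\vert$ and the energy bound, $\nabla_{\!h}u_{h,\varepsilon_h}^{cr}$ stays bounded in $\mathcal{L}^0(\mathcal{T}_h)^d$, and with (i) controlling $\Pi_h u_{h,\varepsilon_h}^{cr}$ the family $(u_{h,\varepsilon_h}^{cr})$ is bounded in the finite-dimensional space $\mathcal{S}^{1,cr}(\mathcal{T}_h)$. Any sequence $\bar\varepsilon\to 0$ then admits a subsequence with $u_{h,\varepsilon_h}^{cr}\to\tilde u$; since $f_{\varepsilon_h}\to\vert\cdot\vert$ uniformly on bounded sets (Lemma \ref{lem:properties} (ii), using $\varepsilon_h\le\bar\varepsilon$), we get $f_{\varepsilon_h}(\vert\nabla_{\!h}u_{h,\varepsilon_h}^{cr}\vert)\to\vert\nabla_{\!h}\tilde u\vert$ in $\mathcal{L}^0(\mathcal{T}_h)$, hence $\|\nabla_{\!h}\tilde u\|_{L^1(\Omega)} = \|\nabla_{\!h}u_h^{cr}\|_{L^1(\Omega)}$, while $\Pi_h\tilde u = \Pi_h u_h^{cr}$ by (i). Therefore $I_h^{cr}(\tilde u)=I_h^{cr}(u_h^{cr})$, and uniqueness of the discrete primal solution forces $\tilde u = u_h^{cr}$. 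As the limit is subsequence-independent, $u_{h,\varepsilon_h}^{cr}\to u_h^{cr}$ (so $\nabla_{\!h}u_{h,\varepsilon_h}^{cr}\to\nabla_{\!h}u_h^{cr}$) along the whole family, and the uniform convergence $f_{\varepsilon_h}\to\vert\cdot\vert$ finally delivers (iv).
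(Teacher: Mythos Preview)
Your proof is correct and follows essentially the same strategy as the paper: a two-sided comparison between $I_h^{cr}$ and $I_{h,\varepsilon_h}^{cr}$ for (i), the optimality relation \eqref{eq:pDirichletOptimalityCR1.1} for (ii), the explicit conjugate formula for (iii), and a compactness/uniqueness argument on the fixed finite-dimensional space $\mathcal{S}^{1,cr}(\mathcal{T}_h)$ for (iv).

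There is one small but genuine variation worth recording. In (i) the paper uses the strong convexity of the \emph{unregularized} functional $I_h^{cr}$ at its minimizer $u_h^{cr}$, i.e., $\tfrac{\alpha}{2}\|\Pi_h u_{h,\varepsilon_h}^{cr}-\Pi_h u_h^{cr}\|_{L^2(\Omega)}^2\le I_h^{cr}(u_{h,\varepsilon_h}^{cr})-I_h^{cr}(u_h^{cr})$, and then bounds the right-hand side via the comparison and the minimality of $u_{h,\varepsilon_h}^{cr}$ for $I_{h,\varepsilon_h}^{cr}$; this produces exactly the stated constant $\tfrac{\bar\varepsilon}{1-\bar\varepsilon}(\tfrac{\alpha}{2}\|g\|_{L^2(\Omega)}^2+2|\Omega|)$. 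You instead use the strong convexity of the \emph{regularized} functional $I_{h,\varepsilon_h}^{cr}$ at $u_{h,\varepsilon_h}^{cr}$ and then the minimality of $u_h^{cr}$ for $I_h^{cr}$. Your route is the ``dual'' one and yields the slightly sharper bound $\bar\varepsilon(\tfrac{\alpha}{2}\|g\|_{L^2(\Omega)}^2+|\Omega|)$, which indeed implies the paper's statement. In (iv) the paper identifies the subsequential limit $\tilde u$ as a minimizer directly via $I_h^{cr}(\tilde u)=\lim I_{h,\varepsilon_h}^{cr}(u_{h,\varepsilon_h}^{cr})\le\lim I_{h,\varepsilon_h}^{cr}(v_h)=I_h^{cr}(v_h)$ for all $v_h$, whereas you match the two energy pieces separately ($\Pi_h\tilde u=\Pi_h u_h^{cr}$ from (i) and $\|\nabla_{\!h}\tilde u\|_{L^1(\Omega)}=\|\nabla_{\!h}u_h^{cr}\|_{L^1(\Omega)}$ from energy convergence); both routes invoke uniqueness of $u_h^{cr}$ and are equivalent.
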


 \begin{proof}

 \textit{ad (i).} Using both the strong convexity of $I_h^{cr}\colon \mathcal{S}^{1,cr}(\mathcal{T}_h)\to \mathbb{R}\cup\{+\infty\}$ and Lemma~\ref{lem:properties}~(ii), 
 we obtain
 \begin{align}\label{prop:reg_conv.1}
    \begin{aligned}
   \tfrac{\alpha}{2}\|\Pi_h u_{h,\varepsilon_h}^{cr}-\Pi_hu_h^{cr}\|_{L^2(\Omega)}^2&\leq I_h^{cr}(u_{h,\varepsilon_h}^{cr})-I_h^{cr}(u_h^{cr})\\&\leq 
   \tfrac{1}{1-\|\varepsilon_h\|_{L^\infty(\Omega)}} I_{h,\varepsilon_h}^{cr}(u_{h,\varepsilon_h}^{cr})+\tfrac{\|\varepsilon_h\|_{L^\infty(\Omega)}^2}{1-\|\varepsilon_h\|_{L^\infty(\Omega)}}\vert \Omega\vert -I_h^{cr}(u_h^{cr})
   \\&\leq 
   \tfrac{1}{1-\|\varepsilon_h\|_{L^\infty(\Omega)}} I_{h,\varepsilon_h}^{cr}(u_h^{cr})+\tfrac{\|\varepsilon_h\|_{L^\infty(\Omega)}^2}{1-\|\varepsilon_h\|_{L^\infty(\Omega)}}\vert \Omega\vert-I_h^{cr}(u_h^{cr})
   \\&\leq 
   \tfrac{1}{1-\|\varepsilon_h\|_{L^\infty(\Omega)}} ( I_h^{cr}(u_h^{cr})
+2\,\|\varepsilon_h\|_{L^\infty(\Omega)}\,\vert\Omega\vert)-I_h^{cr}(u_h^{cr})
   \\&=
   \tfrac{\|\varepsilon_h\|_{L^\infty(\Omega)}}{1-\|\varepsilon_h\|_{L^\infty(\Omega)}} \,(I_h^{cr}(u_h^{cr})+2\,\vert\Omega\vert)\,.
    \end{aligned}
  \end{align}
  Since, by the minimality of $u_h^{cr}\in \mathcal{S}^{1,cr}(\mathcal{T}_h)$ for \eqref{discrete-ROF-primal} and the  $L^2$-stability of $\Pi_h\colon L^2(\Omega)\to \mathcal{L}^0(\mathcal{T}_h)$,  it holds
  \begin{align}\label{prop:reg_conv.2}
  I_h^{cr}(u_h^{cr})\leq I_h^{cr}(0)=\tfrac{\alpha}{2}\|g_h\|_{L^2(\Omega)}^2\leq \tfrac{\alpha}{2}\|g\|_{L^2(\Omega)}^2\,,
  \end{align}
  from \eqref{prop:reg_conv.3} we conclude the claimed error estimate.

  \textit{ad (ii).} From claim (i), it follows that
  \begin{align}
      \Pi_h u_{h,\varepsilon_h}^{cr}\to \Pi_hu_h^{cr}\quad \text{ in } \mathcal{L}^0(\mathcal{T}_h)\quad (\|\varepsilon_h\|_{L^\infty(\Omega)}\to 0)\,.\label{prop:reg_conv.3}
  \end{align}
  Thus, using \eqref{prop:reg_conv.3}, from $\textup{div}\, z_{h,\varepsilon_h}^{rt}=\alpha\, ( \Pi_h u_{h,\varepsilon_h}^{cr}-g_h)$ in $\mathcal{L}^0(\mathcal{T}_h)$, cf.\ \eqref{eq:pDirichletOptimalityCR1.1},  we conclude that
  \begin{align*}
    \textup{div}\, z_{h,\varepsilon_h}^{rt}\to \alpha\,(\Pi_hu_h^{cr}-g_h)\quad\textup{ in }\mathcal{L}^0(\mathcal{T}_h)\quad(\|\varepsilon_h\|_{L^\infty(\Omega)}\to 0)\,.
    \end{align*}
    
    \textit{ad (iii).} Due to $\Pi_h z_{h,\varepsilon_h}^{rt}=\frac{f_{\varepsilon_h}'(\vert \nabla_{\!h} u_{h,\varepsilon_h}^{cr}\vert)}{\vert \nabla_{\!h} u_{h,\varepsilon_h}^{cr}\vert} \nabla_{\!h} u_{h,\varepsilon_h}^{cr} $ and Lemma \ref{lem:properties} (iii),~we~have~that
    \begin{align}
        \vert \Pi_h z_{h,\varepsilon_h}^{rt}\vert =\vert f_{\varepsilon_h}'(\vert \nabla_{\!h} u_{h,\varepsilon_h}^{cr}\vert)\vert \leq 1-\varepsilon_h\quad\text{ a.e.\ in }\Omega\,.\label{prop:reg_conv.5}
    \end{align}
    Therefore, using  Lemma \ref{lem:properties} (iv)  together with \eqref{prop:reg_conv.5}, we conclude that
    \begin{align*}
       \left. \begin{aligned}
        \vert f_{\varepsilon_h}^*(\vert \Pi_h z_{h,\varepsilon_h}^{rt}\vert )\vert& =
        \varepsilon_h\,((1-\varepsilon_h)^2-\vert \Pi_h z_{h,\varepsilon_h}^{rt}\vert ^2)^{\frac{1}{2}}
        \\&\leq \varepsilon_h\,(1-\varepsilon_h)\leq \varepsilon_h
        \end{aligned}\quad\right\}\quad\text{ a.e.\ in }\Omega\,,
    \end{align*}
    which implies that $f_{\varepsilon_h}^*(\vert \Pi_h z_{h,\varepsilon_h}^{rt}\vert )\to 0$ in $\mathcal{L}^0(\mathcal{T}_h)$ $(\|\varepsilon_h\|_{L^\infty(\Omega)}\to 0)$.

    \textit{ad (iv).} Due to \eqref{prop:reg_conv.2}, $(u_{h,\varepsilon_h}^{cr})_{\|\varepsilon_h\|_{L^\infty(\Omega)}\to 0}\subseteq \mathcal{S}^{1,cr}(\mathcal{T}_h)$ is bounded. The finite-dimensionality~of $\mathcal{S}^{1,cr}(\mathcal{T}_h)$ and the Bolzano--Weierstraß theorem yield a subsequence   ${(u_{h,\varepsilon_h'}^{cr})_{\|\varepsilon_h'\|_{L^\infty(\Omega)}\to 0}\subseteq \mathcal{S}^{1,cr}(\mathcal{T}_h)}$ and a function $\tilde{u}_h^{cr}\in \mathcal{S}^{1,cr}(\mathcal{T}_h)$ such that
    \begin{align}\label{prop:reg_conv.6}
        u_{h,\varepsilon_h'}^{cr}\to \tilde{u}_h^{cr}\quad\text{ in } \mathcal{S}^{1,cr}(\mathcal{T}_h)\quad(\|\varepsilon_h'\|_{L^\infty(\Omega)}\to 0)\,.
    \end{align}
    From \eqref{prop:reg_conv.6} it is readily derived that
    \begin{align*}
         f_{\varepsilon_h'} (\vert \nabla_{\!h} u_{h,\varepsilon_h'}^{cr}\vert)\to \nabla_{\!h} \tilde{u}_h^{cr}\quad\text{ in }\mathcal{L}^0(\mathcal{T}_h)\quad(\|\varepsilon_h'\|_{L^\infty(\Omega)}\to 0)\,.
    \end{align*}
    Consequently, for every $v_h\in \mathcal{S}^{1,cr}(\mathcal{T}_h)$, we find that
    \begin{align*}
        I_h^{cr}(\tilde{u}_h^{cr})&=\lim_{\|\varepsilon_h'\|_{L^\infty(\Omega)}\to 0}{I_{h,\varepsilon_h'}^{cr}(u_{h,\varepsilon_h'}^{cr})}\\&\leq \lim_{\|\varepsilon_h'\|_{L^\infty(\Omega)}\to 0}{I_{h,\varepsilon_h'}^{cr}(v_h)}\\&=I_h^{cr}(v_h)\,.
    \end{align*}
    Thus, due to the uniqueness of $u_h^{cr}\in \mathcal{S}^{1,cr}(\mathcal{T}_h)$ as a minimizer of \eqref{discrete-ROF-primal}, we get $\tilde{u}_h^{cr}=u_h^{cr}$~in~$\mathcal{S}^{1,cr}(\mathcal{T}_h)$. Since this argumentation remains valid for each subsequence of $(u_{h,\varepsilon_h}^{cr})_{\|\varepsilon_h\|_{L^\infty(\Omega)}\to 0}\subseteq \mathcal{S}^{1,cr}(\mathcal{T}_h)$, the standard subsequence principle implies that $f_{\varepsilon_h} (\vert \nabla_{\!h} u_{h,\varepsilon_h}^{cr}\vert)\hspace{-0.1em}\to\hspace{-0.1em} \nabla_{\!h} u_h^{cr}$ in $\mathcal{L}^0(\mathcal{T}_h)$~${(\|\varepsilon_h\|_{L^\infty(\Omega)}\hspace{-0.1em}\to\hspace{-0.1em} 0)}$.
 \end{proof}

    The approximation properties of the regularized, discrete ROF model \eqref{discrete-ROF-primal_reg} (and  \eqref{discrete-ROF-dual_reg}) towards the (unregularized) discrete ROF model \eqref{discrete-ROF-primal} (and  \eqref{discrete-ROF-dual_reg}) enable us to transfer the discrete convex duality relations established in Proposition \ref{prop:discrete_convex_duality}, which apply mainly due to the differentiability of the regularized, discrete ROF model, to the non-differentiable discrete~ROF~model. To the best of the authors' knowledge, the following discrete convex duality relations for the (unregularized) discrete ROF model \eqref{discrete-ROF-primal} 
    seem to be new.\enlargethispage{7mm}

    \begin{theorem}\label{thm:discrete_ROF_strong_duality}
        There exists a vector field $z_h^{rt}\in \mathcal{R}T^0_N(\mathcal{T}_h)$ with $\vert \Pi_h z_h^{rt}\vert \leq 1$ a.e.\  in $\Omega$ and the following properties:
        \begin{itemize}[noitemsep,topsep=2pt,leftmargin=!,labelwidth=\widthof{(iii)}]
            \item[(i)] For a not relabeled subsequence, it holds 
            \begin{align*}
                z_{h,\varepsilon_h}^{rt}\to z_h^{rt}\quad \text{ in } \mathcal{R}T^0_N(\mathcal{T}_h)\quad (\|\varepsilon_h\|_{L^\infty(\Omega)}\to 0)\,.
            \end{align*}
            \item[(ii)] There hold the following discrete convex optimality relations:
            \begin{align*}
            \begin{aligned}
                \textup{div}\, z_h^{rt}&=\alpha\, (\Pi_h u_h^{cr}-g_h)&&\quad\text{ in }\mathcal{L}^0(\mathcal{T}_h)\,,\\
                \Pi_hz_h^{rt}\cdot \nabla_{\!h} u_h^{cr}&=\vert \nabla_{\!h} u_h^{cr}\vert&&\quad\text{ in }\mathcal{L}^0(\mathcal{T}_h)\,.
                \end{aligned}
            \end{align*}
            
            \item[(iii)] The discrete flux $z_h^{rt}\in \mathcal{R}T^0_N(\mathcal{T}_h)$ is maximal for $D_h^{rt}\colon \mathcal{R}T^0_N(\mathcal{T}_h)\to \mathbb{R}$ and discrete strong duality applies, i.e., 
            \begin{align*}
                I_h^{cr}(u_h^{cr})=D_h^{rt}(z_h^{rt})\,.
            \end{align*}
        \end{itemize}
    \end{theorem}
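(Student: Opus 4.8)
The plan is to obtain $z_h^{rt}$ as a subsequential limit of the regularized dual solutions $z_{h,\varepsilon_h}^{rt}\in\mathcal{R}T^0_N(\mathcal{T}_h)$ furnished by Proposition \ref{prop:discrete_convex_duality}, and then to pass to the limit $\|\varepsilon_h\|_{L^\infty(\Omega)}\to 0$ in each associated discrete relation using the convergences collected in Proposition \ref{prop:reg_conv}. First I would establish that the family $(z_{h,\varepsilon_h}^{rt})$ is bounded in the finite-dimensional space $\mathcal{R}T^0_N(\mathcal{T}_h)$. Since a lowest-order Raviart--Thomas field has element-wise constant divergence and, on each $T\in\mathcal{T}_h$, equals $x\mapsto \Pi_h z_{h,\varepsilon_h}^{rt}|_T+\tfrac{1}{d}(\textup{div}\,z_{h,\varepsilon_h}^{rt}|_T)(x-x_T)$, boundedness follows from $\vert \Pi_h z_{h,\varepsilon_h}^{rt}\vert\le 1-\varepsilon_h\le 1$ a.e.\ in $\Omega$ (Lemma \ref{lem:properties} (iii) via \eqref{eq:pDirichletOptimalityCR1.2}) together with the convergence, hence boundedness, of $\textup{div}\,z_{h,\varepsilon_h}^{rt}$ from Proposition \ref{prop:reg_conv} (ii). The Bolzano--Weierstra\ss{} theorem then yields a (not relabeled) subsequence with $z_{h,\varepsilon_h}^{rt}\to z_h^{rt}$ in $\mathcal{R}T^0_N(\mathcal{T}_h)$, which is claim (i); the pointwise bound passes to the limit by continuity of $\Pi_h$ on the finite-dimensional space, giving $\vert \Pi_h z_h^{rt}\vert\le 1$ a.e.\ in $\Omega$.

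For claim (ii), the first optimality relation is immediate: by continuity of the divergence together with Proposition \ref{prop:reg_conv} (ii), we obtain $\textup{div}\,z_h^{rt}=\lim\textup{div}\,z_{h,\varepsilon_h}^{rt}=\alpha\,(\Pi_h u_h^{cr}-g_h)$ in $\mathcal{L}^0(\mathcal{T}_h)$. For the second relation I would pass to the limit in the discrete Fenchel--Young equality \eqref{eq:pDirichletOptimalityCR2}, i.e.\ $\Pi_h z_{h,\varepsilon_h}^{rt}\cdot\nabla_{\!h}u_{h,\varepsilon_h}^{cr}=f_{\varepsilon_h}^*(\vert\Pi_h z_{h,\varepsilon_h}^{rt}\vert)+f_{\varepsilon_h}(\vert\nabla_{\!h}u_{h,\varepsilon_h}^{cr}\vert)$. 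The argument in the proof of Proposition \ref{prop:reg_conv} (iv) shows, via the subsequence principle, that $u_{h,\varepsilon_h}^{cr}\to u_h^{cr}$ in $\mathcal{S}^{1,cr}(\mathcal{T}_h)$, whence $\nabla_{\!h}u_{h,\varepsilon_h}^{cr}\to\nabla_{\!h}u_h^{cr}$ in $\mathcal{L}^0(\mathcal{T}_h)^d$; combined with $\Pi_h z_{h,\varepsilon_h}^{rt}\to\Pi_h z_h^{rt}$ the left-hand side converges to $\Pi_h z_h^{rt}\cdot\nabla_{\!h}u_h^{cr}$, while Proposition \ref{prop:reg_conv} (iii) and (iv) send the right-hand side to $0+\vert\nabla_{\!h}u_h^{cr}\vert$. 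This yields $\Pi_h z_h^{rt}\cdot\nabla_{\!h}u_h^{cr}=\vert\nabla_{\!h}u_h^{cr}\vert$ in $\mathcal{L}^0(\mathcal{T}_h)$.

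Finally, for claim (iii) I would establish discrete strong duality by the direct computation used in the proof of Proposition \ref{prop:discrete_convex_duality} (iii). Since $\vert\Pi_h z_h^{rt}\vert\le 1$ gives $I_{K_1(0)}(\Pi_h z_h^{rt})=0$, and since $\textup{div}\,z_h^{rt}+\alpha\,g_h=\alpha\,\Pi_h u_h^{cr}$, the dual energy reduces to $D_h^{rt}(z_h^{rt})=-\tfrac{\alpha}{2}\|\Pi_h u_h^{cr}\|_{L^2(\Omega)}^2+\tfrac{\alpha}{2}\|g_h\|_{L^2(\Omega)}^2$. On the primal side, using the second optimality relation to write $\|\nabla_{\!h}u_h^{cr}\|_{L^1(\Omega;\mathbb{R}^d)}=(\Pi_h z_h^{rt},\nabla_{\!h}u_h^{cr})_\Omega$, the discrete integration-by-parts formula \eqref{eq:pi0}, and the first optimality relation to replace $\textup{div}\,z_h^{rt}$, one computes $I_h^{cr}(u_h^{cr})=-\tfrac{\alpha}{2}\|\Pi_h u_h^{cr}\|_{L^2(\Omega)}^2+\tfrac{\alpha}{2}\|g_h\|_{L^2(\Omega)}^2$ as well, so $I_h^{cr}(u_h^{cr})=D_h^{rt}(z_h^{rt})$; maximality of $z_h^{rt}$ for $D_h^{rt}$ then follows from the discrete weak duality relation \eqref{eq:discrete_weak_duality}. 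The main obstacle I anticipate lies in the passage to the limit in the second optimality relation on elements where $\nabla_{\!h}u_h^{cr}$ vanishes: there $\Pi_h z_h^{rt}$ is not pinned down by the regularized formula \eqref{eq:pDirichletOptimalityCR1.2} but only constrained by $\vert\Pi_h z_h^{rt}\vert\le 1$, which is precisely why compactness of $(z_{h,\varepsilon_h}^{rt})$ rather than an explicit limit is needed, and why the argument must be routed through the product relation \eqref{eq:pDirichletOptimalityCR2} rather than through \eqref{eq:pDirichletOptimalityCR1.2} directly.
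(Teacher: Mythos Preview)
Your proposal is correct and, for parts (i) and (ii), matches the paper's proof essentially verbatim: boundedness of $(z_{h,\varepsilon_h}^{rt})$ from the element-wise structure together with $\vert\Pi_h z_{h,\varepsilon_h}^{rt}\vert\le 1-\varepsilon_h$ and the convergence of the divergences, then Bolzano--Weierstra\ss{}, then passage to the limit in \eqref{eq:pDirichletOptimalityCR1.1} and in the Fenchel--Young identity \eqref{eq:pDirichletOptimalityCR2} exactly as the paper does.

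For part (iii) you take a slightly different route. The paper simply passes to the limit in the regularized strong duality relation from Proposition~\ref{prop:discrete_convex_duality}(iii), writing $I_h^{cr}(u_h^{cr})=\lim I_{h,\varepsilon_h}^{cr}(u_{h,\varepsilon_h}^{cr})=\lim D_{h,\varepsilon_h}^{rt}(z_{h,\varepsilon_h}^{rt})=D_h^{rt}(z_h^{rt})$, which requires checking that $f_{\varepsilon_h}^*(\vert\Pi_h z_{h,\varepsilon_h}^{rt}\vert)\to 0$ and $\textup{div}\,z_{h,\varepsilon_h}^{rt}\to\textup{div}\,z_h^{rt}$ to identify the dual limit. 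You instead bypass this limit entirely and verify $I_h^{cr}(u_h^{cr})=D_h^{rt}(z_h^{rt})$ by a direct algebraic computation from the two optimality relations just established in (ii), via the discrete integration-by-parts formula \eqref{eq:pi0}. Your argument is self-contained once (ii) is in hand and avoids tracking the convergence of the dual energies; the paper's argument is shorter on the page because it reuses Proposition~\ref{prop:discrete_convex_duality}(iii) wholesale. Both are clean, and your closing remark about why one must route through \eqref{eq:pDirichletOptimalityCR2} rather than \eqref{eq:pDirichletOptimalityCR1.2} on the degenerate set is a correct diagnosis of the only subtle point.
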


    \begin{proof}
        \textit{ad (i).} Due to Proposition \ref{prop:reg_conv}  (ii) and \eqref{prop:reg_conv.5}, the sequence ${(z_{h,\varepsilon_h}^{rt})_{\|\varepsilon_h\|_{L^\infty(\Omega)}\to 0}\subseteq \mathcal{R}T^0_N(\mathcal{T}_h)}$ is bounded. Thus, by the finite-dimensionality of $\mathcal{R}T^0_N(\mathcal{T}_h)$, the Bolzano--Weierstraß theorem yields a not relabeled subsequence and a vector field $z_h^{rt}\in \mathcal{R}T^0_N(\mathcal{T}_h)$ such that
        \begin{align}\label{thm:discrete_ROF_strong_duality.1}
            z_{h,\varepsilon_h}^{rt}\to z_h^{rt}\quad \text{ in }\mathcal{R}T^0_N(\mathcal{T}_h)\quad (\|\varepsilon_h\|_{L^\infty(\Omega)}\to 0)\,.
        \end{align}
        Due to the continuity of $\Pi_h\colon L^1(\Omega)\to \mathcal{L}^0(\mathcal{T}_h)$ and $\mathcal{R}T^0_N(\mathcal{T}_h)\hookrightarrow L^1(\Omega)$, from \eqref{thm:discrete_ROF_strong_duality.1}, we obtain
        \begin{align}\label{thm:discrete_ROF_strong_duality.2}
            \Pi_h z_{h,\varepsilon_h}^{rt}\to \Pi_h z_h^{rt}\quad \text{ in }\mathcal{L}^0(\mathcal{T}_h)\quad (\|\varepsilon_h\|_{L^\infty(\Omega)}\to 0)\,.
        \end{align}
        From $\vert \Pi_h z_{h,\varepsilon_h}^{rt}\vert \leq 1-\varepsilon_h$ a.e.\ in $\Omega$, cf.\ \eqref{prop:reg_conv.5}, and \eqref{thm:discrete_ROF_strong_duality.2}, we obtain $\vert \Pi_h z_h^{rt}\vert \leq 1$ a.e.\  in $\Omega$, i.e.,
        \begin{align}\label{thm:discrete_ROF_strong_duality.3}
            I_{K_1(0)}(\Pi_h z_h^{rt})=0\,.
        \end{align}
        
        \textit{ad (ii).} Using  Proposition \ref{prop:reg_conv}, \eqref{eq:pDirichletOptimalityCR1.1}, and \eqref{eq:pDirichletOptimalityCR2}, we find that
        \begin{align*}
            \left.\begin{aligned}
            \textup{div}\,z_h^{rt}&=\lim_{\|\varepsilon_h\|_{L^\infty(\Omega)}\to 0}{\textup{div}\,z_{h,\varepsilon_h}^{rt}}\\&=\lim_{\|\varepsilon_h\|_{L^\infty(\Omega)}\to 0}{\alpha\,(\Pi_hu_{h,\varepsilon_h}^{cr}-g_h)}\\&=\alpha\,(\Pi_h u_h^{cr}-g_h)\end{aligned}\quad\right\}\quad\text{ a.e. in }\Omega\,,\\
            \intertext{as well as}
            \left.\begin{aligned}
            \Pi_h z_h^{rt}\cdot \nabla_{\!h} u_h^{cr}&=\lim_{\|\varepsilon_h\|_{L^\infty(\Omega)}\to 0}{\Pi_h z_{h,\varepsilon_h}^{rt}\cdot \nabla_{\!h} u_{h,\varepsilon_h}^{cr}}
            \\&=\lim_{\|\varepsilon_h\|_{L^\infty(\Omega)}\to 0}{f_{\varepsilon_h}^*(\vert  \Pi_h z_{h,\varepsilon_h}^{rt}\vert )+f_{\varepsilon_h}(\vert \nabla_{\!h} u_{h,\varepsilon_h}^{cr}\vert)}
            \\&=\vert\nabla_{\!h} u_h^{cr}\vert
            \end{aligned}\quad\right\}\quad\text{ a.e. in }\Omega \,,
        \end{align*}
        i.e., the claimed discrete convex optimality conditions.

        \textit{ad (iii).} 
        Using Proposition \ref{prop:reg_conv} and \eqref{thm:discrete_ROF_strong_duality.3}, we find that
        \begin{align*}
            I_h^{cr}(u_h^{cr})&=\lim_{\|\varepsilon_h\|_{L^\infty(\Omega)}\to 0}{I_{h,\varepsilon_h}^{cr}(u_{h,\varepsilon_h}^{cr})}\\&=\lim_{\|\varepsilon_h\|_{L^\infty(\Omega)}\to 0}{D_{h,\varepsilon_h}^{rt}(z_{h,\varepsilon_h}^{rt})}
            \\&=D_h^{rt}(z_h^{rt})\,,
        \end{align*}
        i.e., the claimed discrete strong duality relation.
    \end{proof}\newpage

 	\section{Numerical experiments}\label{sec:experiments}\enlargethispage{5mm}
	
	\hspace{5mm}In this section, we review the theoretical findings of Section \ref{sec:ROF} via numerical experiments. To compare \hspace{-0.1mm}approximations \hspace{-0.1mm}to \hspace{-0.1mm}an \hspace{-0.1mm}exact \hspace{-0.1mm}solution, \hspace{-0.1mm}we \hspace{-0.1mm}impose \hspace{-0.1mm}Dirichlet \hspace{-0.1mm}boundary \hspace{-0.1mm}conditions~\hspace{-0.1mm}on~\hspace{-0.1mm}${\Gamma_D\!=\!\partial\Omega}$, though an existence theory is difficult to establish, in general. However, the concepts derived in Section \ref{sec:ROF} carry over verbatimly with $\Gamma_N=\emptyset$ provided that the existence~of~a~minimizer~is~given. All experiments were conducted deploying the finite element software package \textsf{FEniCS}  (version 2019.1.0), cf.\  \cite{LW10}. All graphics were generated using the \textsf{Matplotlib} library (version 3.5.1),~cf.~\cite{Hun07}, and the \textsf{Vedo} library (version 2023.4.4), cf.\ \cite{vedo}.
	
	\subsection{Implementation details regarding the optimization procedure}
 
	\hspace{5mm}All computations are based on the regularized, discrete ROF problem \eqref{discrete-ROF-primal_reg}.~This~is~motivated by the fact that appealing to Proposition \ref{prop:reg_conv} (i), in order to bound~the~error~${\|u-\Pi_h u_h^{cr}\|_{L^2(\Omega)}}$, it suffices to determine the error $\|u-\Pi_h u_{h,\varepsilon_h}^{cr}\|_{L^2(\Omega)}$. The iterative~minimization~of \eqref{discrete-ROF-primal_reg} is realized using a semi-implicit discretized $L^2$-gradient flow from \cite{BDN18} (see also \cite[Section~5]{Bar21}) modified with a residual stopping criterion guaranteeing the necessary accuracy~in~the~optimization~procedure.

	\begin{algorithm}[Semi-implicit discretized $L^2$-gradient flow]\label{algorithm}
		Let $g_h, \varepsilon_h\in \mathcal{L}^0(\mathcal{T}_h)$ be such that $\varepsilon_h>0$ a.e.\ in $\Omega$ and $\|\varepsilon_h\|_{L^\infty(\Omega)}<1$, and choose $\tau,\varepsilon_{stop}^h> 0$. Moreover, let
		$u^0_h\in \mathcal{S}^{1,\textit{cr}}_D(\mathcal{T}_h)$. Then, for every $k\in \mathbb{N}$:
		\begin{description}[noitemsep,topsep=2pt,labelwidth=\widthof{\textit{(ii)}},leftmargin=!,font=\normalfont\itshape]
			\item[(i)] Compute the iterate $\smash{u_h^k\in \mathcal{S}^{1,\textit{cr}}_D(\mathcal{T}_h)}$ such that for every $\smash{v_h\in \mathcal{S}^{1,\textit{cr}}_D(\mathcal{T}_h)}$,~it~holds
			\begin{align}
  (d_{\tau} u_h^k,v_h)_{\Omega}+\Big(\tfrac{f_{\varepsilon_h}'(\vert \nabla_{\!h}u_h^{k-1}\vert )}{\vert \nabla_{\!h}u_h^{k-1}\vert}\nabla_{\!h}u_h^k ,\nabla_{\!h}v_h \Big)_{\Omega}+\alpha\, (\Pi_hu_h^k-g_h,\Pi_hv_h)_{\Omega}=0\,,\label{alg:step_1}
			\end{align}
			where $d_{\tau} u_h^k\coloneqq \frac{1}{\tau}(u_h^k-u_h^{k-1})$ denotes the backward difference quotient.
		\item[(ii)] Compute the residual $\smash{r_h^k\in \mathcal{S}^{1,\textit{cr}}_D(\mathcal{T}_h)}$ such that for every $\smash{v_h\in\mathcal{S}^{1,\textit{cr}}_D(\mathcal{T}_h)}$, it holds
		\begin{align}
			(r_h^k,v_h)_{\Omega}=\Big(\tfrac{f_{\varepsilon_h}'(\vert \nabla_{\!h}u_h^k\vert )}{\vert \nabla_{\!h}u_h^k\vert} \nabla_{\!h}u_h^k,\nabla_{\!h}v_h \Big)_{\Omega}+\alpha\, (\Pi_hu_h^k-g_h,\Pi_hv_h)_{\Omega}\,.\label{alg:step_2}
		\end{align}
		Stop if $\|r_h^k\|_{L^2(\Omega)}\leq \varepsilon_{stop}^h$; otherwise, increase $k\!\to\! k+1$~and~continue~with~step~\textit{(i)}.
		\end{description}
	\end{algorithm}

 Appealing to \cite[Remark 5.5]{Bar21}, the iterates $u_h^k\in  \mathcal{S}^{1,\textit{cr}}_D(\mathcal{T}_h)$, $k\in \mathbb{N}$, the residuals $r_h^k\in \mathcal{S}^{1,\textit{cr}}_D(\mathcal{T}_h)$, $k\in \mathbb{N}$, generated by Algorithm~\ref{algorithm}, and the minimizer $u_{h,\varepsilon_h}^{cr}\in \mathcal{S}^{1,\textit{cr}}_D(\mathcal{T}_h)$ of \eqref{discrete-ROF-primal_reg} satisfy
	\begin{align}
		\|u_{h,\varepsilon_h}^{cr}-u_h^k\|_{L^2(\Omega)}\leq 2\,\|r_h^k\|_{L^2(\Omega)}\,.\label{residual}
	\end{align}
	In consequence, if we choose as a stopping criterion that $\|r_h^{k^*}\|_{L^2(\Omega)}\hspace{-0.1em}\leq\hspace{-0.1em}\varepsilon_{\textit{stop}}^h\hspace{-0.1em}\coloneqq \hspace{-0.1em}c_{\textit{stop}}\,h$ for $k^*\in \mathbb{N}$, where $c_{\textit{stop}}\hspace{-0.1em}>\hspace{-0.1em}0$ does not depend on $h\hspace{-0.1em}>\hspace{-0.1em}0$, then, owing to Proposition \ref{prop:reg_conv} (i) and \eqref{residual},~we~have~that
    \begin{align*}
        \|\Pi_h(u_h^{cr}-u_h^{k^*})\|_{L^2(\Omega)}^2\leq \tfrac{\|\varepsilon_h\|_{L^\infty(\Omega)}}{1-\|\varepsilon_h\|_{L^\infty(\Omega)}}\,(2\,\|g\|_{L^2(\Omega)}^2+\tfrac{8}{\alpha}\,\vert \Omega\vert)+8\,c_{\textit{stop}}^2\,h^2\,.
    \end{align*}
    If $\|\varepsilon_h\|_{L^\infty(\Omega)}\leq c_{\textit{reg}}\, h^2$, where $c_{\textit{reg}}\in (0,1)$, then, we arrive at $\|\Pi_h(u_h^{cr}-u_h^{k^*})\|_{L^2(\Omega)}=\mathcal{O}(h)$. 
    Thus, to bound the error $\|u-\Pi_hu_h^{cr}\|_{L^2(\Omega)} $ experimentally, it is sufficient to compute $\smash{\|u-\Pi_hu_h^{k^*}\|_{L^2(\Omega)}}$.

    The \hspace{-0.1mm}following \hspace{-0.1mm}proposition \hspace{-0.1mm}proves \hspace{-0.1mm}the \hspace{-0.1mm}well-posedness, \hspace{-0.1mm}stability, \hspace{-0.1mm}and \hspace{-0.1mm}convergence \hspace{-0.1mm}of \hspace{-0.1mm}Algorithm~\hspace{-0.1mm}\ref{algorithm}.\hspace{-0.5mm}

 \begin{proposition}\label{prop:stability}
 Let the assumptions of Algorithm \ref{algorithm} be satisfied and let $\varepsilon_h\in \mathcal{L}^0(\mathcal{T}_h)$ such that $\varepsilon_h>0$ a.e.\ in $\Omega$ and $\smash{\|\varepsilon_h\|_{L^\infty(\Omega)}}<1$. Then, the following statements apply:
 \begin{itemize}[noitemsep,topsep=2pt,leftmargin=!,labelwidth=\widthof{(iii)}]
    \item[(i)] Algorithm \ref{algorithm} is well-posed, i.e., for every $k\hspace{-0.15em}\in\hspace{-0.15em} \mathbb{N}$, given the most-recent iterate ${u_h^{k-1}\hspace{-0.15em}\in\hspace{-0.15em} \smash{\mathcal{S}^{1,cr}_D(\mathcal{T}_h)}}$, there exists a unique iterate $u_h^k\in \smash{\mathcal{S}^{1,cr}_D(\mathcal{T}_h)}$ solving \eqref{alg:step_1}.
  \item[(ii)] Algorithm \ref{algorithm} is unconditionally strongly stable, i.e., for every  $L\in \mathbb{N}$,~it~holds
  \begin{align*}
  I_{h,\varepsilon_h}^{cr}(u_h^L)+\tau \sum_{k=1}^L{\|d_\tau u_h^k\|_{L^2(\Omega)}^2}\leq I_{h,\varepsilon_h}^{cr}(u_h^0)\,.
  \end{align*}

  \item[(iii)] Algorithm \ref{algorithm} terminates after a finite number of steps, i.e., there exists $k^*\in \mathbb{N}$ such that $\|r_h^{k^*}\|_{L^2(\Omega)}\leq \varepsilon_{stop}^h$.\enlargethispage{6mm}
 \end{itemize}
 \end{proposition}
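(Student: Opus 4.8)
The plan is to treat the three assertions separately, throughout exploiting that $\mathcal{S}^{1,cr}_D(\mathcal{T}_h)$ is finite-dimensional and that the regularization $f_{\varepsilon_h}$ has the structure recorded in Lemma~\ref{lem:properties}. For claim (i), I note that \eqref{alg:step_1} is, for fixed $u_h^{k-1}$, a \emph{linear} problem for $u_h^k$: collecting the $u_h^k$-dependent terms defines the symmetric bilinear form
\[
a_h(w_h,v_h)\coloneqq \tfrac1\tau(w_h,v_h)_\Omega+\Big(\tfrac{f_{\varepsilon_h}'(\vert\nabla_{\!h}u_h^{k-1}\vert)}{\vert\nabla_{\!h}u_h^{k-1}\vert}\nabla_{\!h}w_h,\nabla_{\!h}v_h\Big)_\Omega+\alpha\,(\Pi_hw_h,\Pi_hv_h)_\Omega
\]
on $\mathcal{S}^{1,cr}_D(\mathcal{T}_h)$. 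Since $\tfrac{f_{\varepsilon_h}'(s)}{s}=(1-\varepsilon_h)(s^2+\varepsilon_h^2)^{-1/2}$ is finite and positive for all $s\ge 0$ (including $s=0$, using $f_{\varepsilon_h}'(0)=0$), all three contributions to $a_h$ are non-negative and $a_h(v_h,v_h)\ge \tfrac1\tau\|v_h\|_{L^2(\Omega)}^2$; hence $a_h$ is bounded and coercive, and the Lax--Milgram lemma on the finite-dimensional space provides a unique solution $u_h^k$.

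For claim (ii), the idea is the usual energy argument: I would test \eqref{alg:step_1} with the admissible choice $v_h=d_\tau u_h^k\in\mathcal{S}^{1,cr}_D(\mathcal{T}_h)$. The fidelity contribution is controlled by convexity of $v\mapsto \tfrac\alpha2\|\Pi_hv-g_h\|_{L^2(\Omega)}^2$, yielding $\alpha(\Pi_hu_h^k-g_h,\Pi_hd_\tau u_h^k)_\Omega\ge \tfrac1\tau\big(\tfrac\alpha2\|\Pi_hu_h^k-g_h\|_{L^2(\Omega)}^2-\tfrac\alpha2\|\Pi_hu_h^{k-1}-g_h\|_{L^2(\Omega)}^2\big)$. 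The crux is the semi-implicit gradient term. Writing $s\coloneqq\vert\nabla_{\!h}u_h^{k-1}\vert$, $t\coloneqq\vert\nabla_{\!h}u_h^k\vert$ and using that the weight $b(s)\coloneqq f_{\varepsilon_h}'(s)/s\ge 0$, convexity of $v\mapsto\tfrac12\int_\Omega b(\vert\nabla_{\!h}u_h^{k-1}\vert)\,\vert\nabla_{\!h}v\vert^2\,\mathrm{d}x$ reduces everything to the pointwise inequality $\tfrac12 b(s)(t^2-s^2)\ge f_{\varepsilon_h}(t)-f_{\varepsilon_h}(s)$. This is precisely the concavity of $r\mapsto f_{\varepsilon_h}(\sqrt r)=(1-\varepsilon_h)(r+\varepsilon_h^2)^{1/2}$, evaluated at $r=s^2$ and $r=t^2$, since its derivative at $s^2$ equals $\tfrac12 b(s)$. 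Combining the two bounds gives $\tau\|d_\tau u_h^k\|_{L^2(\Omega)}^2+I_{h,\varepsilon_h}^{cr}(u_h^k)\le I_{h,\varepsilon_h}^{cr}(u_h^{k-1})$, and summing over $k=1,\dots,L$ telescopes to the stated estimate.

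For claim (iii), claim (ii) shows $\tau\sum_{k\ge1}\|d_\tau u_h^k\|_{L^2(\Omega)}^2\le I_{h,\varepsilon_h}^{cr}(u_h^0)<\infty$, so $\|d_\tau u_h^k\|_{L^2(\Omega)}\to0$. I would then compare the residual equation \eqref{alg:step_2} with \eqref{alg:step_1}: subtracting yields, for all $v_h$, $(r_h^k,v_h)_\Omega=-(d_\tau u_h^k,v_h)_\Omega+\big([b(\vert\nabla_{\!h}u_h^k\vert)-b(\vert\nabla_{\!h}u_h^{k-1}\vert)]\nabla_{\!h}u_h^k,\nabla_{\!h}v_h\big)_\Omega$. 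The extra term is the obstacle and must be shown to vanish. Here I would use that $b$ is Lipschitz (its derivative is bounded for fixed $\varepsilon_h$), that stability bounds $\|\nabla_{\!h}u_h^k\|_{L^\infty(\Omega)}$ uniformly in $k$ (from $\|f_{\varepsilon_h}(\vert\nabla_{\!h}u_h^k\vert)\|_{L^1(\Omega)}\le I_{h,\varepsilon_h}^{cr}(u_h^0)$, the bound $f_{\varepsilon_h}(t)\ge(1-\varepsilon_h)\vert t\vert$, and equivalence of norms on the finite-dimensional space), and the reverse triangle inequality $\big\vert\vert\nabla_{\!h}u_h^k\vert-\vert\nabla_{\!h}u_h^{k-1}\vert\big\vert\le\vert\nabla_{\!h}(u_h^k-u_h^{k-1})\vert$. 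Together with $\|u_h^k-u_h^{k-1}\|_{L^2(\Omega)}=\tau\|d_\tau u_h^k\|_{L^2(\Omega)}$ and norm equivalence, testing with $v_h=r_h^k$ gives $\|r_h^k\|_{L^2(\Omega)}\le C\|d_\tau u_h^k\|_{L^2(\Omega)}\to0$, so the criterion $\|r_h^{k^*}\|_{L^2(\Omega)}\le\varepsilon_{stop}^h$ is met after finitely many steps. I expect the two main obstacles to be the concavity reduction in (ii) --- recognizing that semi-implicitness combined with the specific form of $f_{\varepsilon_h}$ yields a genuine energy decrease --- and, in (iii), the Lipschitz-plus-uniform-bound estimate that upgrades $\|d_\tau u_h^k\|_{L^2(\Omega)}\to0$ to $\|r_h^k\|_{L^2(\Omega)}\to0$.
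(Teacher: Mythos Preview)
Your arguments for (i) and (ii) are correct and essentially match the paper's. For (i) the paper also invokes Lax--Milgram via positivity of $f_\varepsilon'(t)/t$. For (ii) the paper tests with $d_\tau u_h^k$ as well; your concavity observation $r\mapsto (1-\varepsilon_h)(r+\varepsilon_h^2)^{1/2}$ is exactly a direct proof of (the part of) Lemma~\ref{lem:stability} that is actually used, and the fidelity term is handled identically.

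For (iii) you take a genuinely different route from the paper. The paper argues by compactness: from $d_\tau u_h^k\to 0$ and the energy bound it extracts, via Bolzano--Weierstra{\ss} on the finite-dimensional space, a subsequence $u_h^{k_l}\to\tilde u_h$ (and hence also $u_h^{k_l-1}\to\tilde u_h$), passes to the limit in \eqref{alg:step_1} to identify $\tilde u_h=u_{h,\varepsilon_h}^{cr}$, and then shows $r_h^{k_l}\to 0$; the standard subsequence principle upgrades this to the full sequence. Your approach instead subtracts \eqref{alg:step_1} from \eqref{alg:step_2} and bounds the residual directly in terms of $d_\tau u_h^k$ using the Lipschitz continuity of $s\mapsto f_{\varepsilon_h}'(s)/s$ (valid for each fixed element-wise $\varepsilon_h>0$), the uniform bound on $\nabla_{\!h}u_h^k$ from stability, and norm equivalence. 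This is correct and in fact more quantitative: you obtain an explicit estimate $\|r_h^k\|_{L^2(\Omega)}\le C_h\,\|d_\tau u_h^k\|_{L^2(\Omega)}$ (with $C_h$ depending on $h$ and $\varepsilon_h$ through the inverse estimates and the Lipschitz constant of $b$), which immediately yields termination. The paper's compactness argument, by contrast, is softer but avoids tracking the Lipschitz constant of $b$ (which blows up as $\varepsilon_h\to 0$); since $h$ and $\varepsilon_h$ are fixed here, this is immaterial, and your argument is a clean alternative.
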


 The proof of Proposition \ref{prop:stability} (ii) is essentially based on the following inequality.

 \begin{lemma}\label{lem:stability}
  For every $\varepsilon\in (0,1)$ and $a,b\in \mathbb{R}^d$, it holds\enlargethispage{10mm}
  \begin{align*}
  \tfrac{f_\varepsilon'(\vert a\vert)}{\vert a\vert } b\cdot(b-a)\ge f_\varepsilon(\vert b\vert)-f_\varepsilon(\vert a\vert)+\tfrac{1}{2}\tfrac{f_\varepsilon'(\vert a\vert)}{\vert a\vert}\vert b-a\vert^2\,.
  \end{align*}
 \end{lemma}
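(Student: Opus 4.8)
The plan is to reduce the vector inequality in $\mathbb{R}^d$ to a scalar concavity statement. First I would set $\lambda\coloneqq f_\varepsilon'(\vert a\vert)/\vert a\vert$ and note that, by \eqref{lem:properties.1}, this quantity equals $(1-\varepsilon)(\vert a\vert^2+\varepsilon^2)^{-1/2}$, which is smooth, positive, and well defined for every $a\in\mathbb{R}^d$ (including $a=0$, where the apparent $0/0$ is resolved by this closed form). Using the elementary identity $b\cdot(b-a)=\tfrac12(\vert b\vert^2-\vert a\vert^2)+\tfrac12\vert b-a\vert^2$, the left-hand side of the claim becomes $\tfrac{\lambda}{2}(\vert b\vert^2-\vert a\vert^2)+\tfrac{\lambda}{2}\vert b-a\vert^2$, so that the two terms $\tfrac{\lambda}{2}\vert b-a\vert^2$ cancel and the asserted inequality is seen to be equivalent to
\begin{align*}
\tfrac{\lambda}{2}\big(\vert b\vert^2-\vert a\vert^2\big)\ge f_\varepsilon(\vert b\vert)-f_\varepsilon(\vert a\vert)\,.
\end{align*}

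Next I would pass to the scalar variables $s\coloneqq\vert a\vert$ and $t\coloneqq\vert b\vert$, reducing the claim to $\tfrac{f_\varepsilon'(s)}{2s}(t^2-s^2)\ge f_\varepsilon(t)-f_\varepsilon(s)$ for all $s,t\ge 0$. The key step is the substitution $r=s^2$: I introduce $h\colon[0,\infty)\to\mathbb{R}$, $h(r)\coloneqq f_\varepsilon(\sqrt{r})=(1-\varepsilon)(r+\varepsilon^2)^{1/2}$, so that $f_\varepsilon(s)=h(s^2)$ and $f_\varepsilon(t)=h(t^2)$. A direct differentiation gives $h'(r)=\tfrac{1-\varepsilon}{2}(r+\varepsilon^2)^{-1/2}$, whence $h'(s^2)=\tfrac12(1-\varepsilon)(s^2+\varepsilon^2)^{-1/2}=\tfrac{\lambda}{2}$. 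Writing $u\coloneqq s^2$ and $w\coloneqq t^2$, the displayed inequality is exactly the tangent-line estimate $h'(u)(w-u)\ge h(w)-h(u)$.

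Finally, this tangent-line estimate holds for all $u,w\ge 0$ precisely because $h$ is concave, which I would verify by computing $h''(r)=-\tfrac{1-\varepsilon}{4}(r+\varepsilon^2)^{-3/2}<0$ for $\varepsilon\in(0,1)$ and $r\ge 0$. This yields the reduced inequality and hence the lemma. There is no serious analytic obstacle here; the entire content lies in two observations---the polarization identity that makes the quadratic $\vert b-a\vert^2$ terms cancel, and the change of variables $r=s^2$, which turns the weighted difference quotient of $f_\varepsilon$ into the ordinary concavity (tangent-line) inequality for the concave function $h$. The only point requiring a word of care is the well-definedness of $\lambda$ at $a=0$, which is handled by the closed form above and is consistent with $f_\varepsilon'(0)=0$ from Lemma \ref{lem:properties} (i).
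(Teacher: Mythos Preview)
Your proof is correct and self-contained. The polarization identity $b\cdot(b-a)=\tfrac12(|b|^2-|a|^2)+\tfrac12|b-a|^2$ is exactly the right device, and the substitution $r=s^2$ cleanly reduces the claim to the tangent-line inequality for the concave function $h(r)=(1-\varepsilon)(r+\varepsilon^2)^{1/2}$.

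The paper, by contrast, does not give a direct argument but invokes a general lemma from \cite[Appendix~A.2]{Bar21}, which applies to any $f\in C^1(\mathbb{R}_{\ge 0})$ for which $t\mapsto f'(t)/t$ is positive, continuous, and monotone (here non-increasing; the paper's ``non-decreasing'' appears to be a slip, since $f_\varepsilon'(t)/t=(1-\varepsilon)(t^2+\varepsilon^2)^{-1/2}$ is clearly decreasing). Your concavity of $h$ is the same condition in disguise: since $h'(r)=\tfrac12\,f_\varepsilon'(\sqrt{r})/\sqrt{r}$, concavity of $h$ is equivalent to $f_\varepsilon'(t)/t$ being non-increasing. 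So the mathematical content is identical; what your approach buys is a short, transparent computation tailored to the specific $f_\varepsilon$ (avoiding the external reference), while the paper's route emphasizes that the inequality holds for a whole class of regularizations sharing this monotonicity property.
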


 \begin{proof}
    Follows from \cite[Appendix A.2]{Bar21}, since $f_\varepsilon\hspace{-0.1em}\in \hspace{-0.1em} C^1(\mathbb{R}_{\ge 0})$ and $(t\hspace{-0.1em}\mapsto\hspace{-0.1em} f_\varepsilon'(t)/t)\hspace{-0.1em}\in\hspace{-0.1em} C^0(\mathbb{R}_{\ge 0})$~is~positive and non-decreasing for all $\varepsilon\in (0,1)$.
 \end{proof}

 \begin{proof}[Proof (of Proposition \ref{prop:stability}).]
 \textit{ad (i).} Since  $\frac{f_\varepsilon'(t)}{t}\ge  0$ for all $\varepsilon\in (0,1)$ and  $t\ge 0$,~the~\mbox{well-posedness} of Algorithm \ref{algorithm} is a direct consequence of the Lax--Milgram lemma.
 
 \textit{ad (ii).}
 Let $L\hspace{-0.1em}\in\hspace{-0.1em} \mathbb{N}$ be arbitrary. Then,
  for every $k\hspace{-0.1em}\in\hspace{-0.1em} \{1,\dots,L\}$, choosing ${v_h\hspace{-0.1em}=\hspace{-0.1em}d_\tau u_h^k\hspace{-0.1em}\in\hspace{-0.1em} \mathcal{S}^{1,cr}_D(\mathcal{T}_h)}$ in \eqref{alg:step_1}, we find that
  \begin{align}\label{prop:stability.1}
  \|d_\tau u_h^k\|_{L^2(\Omega)}^2+\Big(\tfrac{f_{h,\varepsilon_h}'(\vert \nabla_{\!h}u_h^{k-1}\vert )}{\vert \nabla_{\!h}u_h^{k-1}\vert}\nabla_{\!h}u_h^k,\nabla_{\!h} d_\tau u_h^k\Big)_{\Omega}+\alpha\, (\Pi_hu_h^k-g_h,\Pi_h d_\tau u_h^k)_{\Omega}\,.
  \end{align}
  Appealing to Lemma \ref{lem:stability} with $a=\nabla_{\!h}u_h^{k-1}|_T\in \mathbb{R}^d$ and $b=\nabla_{\!h} u_h^k|_T\in \mathbb{R}^d$ applied for all $T\in \mathcal{T}_h$, for every $k\in \{1,\dots,L\}$, we have that
  \begin{align}\label{prop:stability.2}
  \tfrac{f_{h,\varepsilon_h}'(\vert \nabla_{\!h}u_h^{k-1}\vert )}{\vert \nabla_{\!h}u_h^{k-1}\vert}\nabla_{\!h}u_h^k\cdot\nabla_{\!h} d_\tau u_h^k\ge d_\tau f_{h,\varepsilon_h}(\vert \nabla_{\!h}u_h^k\vert )\quad\text{ a.e.\ in }\Omega\,.
  \end{align}
  In addition, since $d_\tau g_h=0$, for every $k\in \{1,\dots,L\}$, we have that
  \begin{align}\label{prop:stability.3}
  \begin{aligned}
  (\Pi_hu_h^k-g_h)\Pi_h d_\tau u_h^k =(\Pi_hu_h^k-g_h)d_\tau(\Pi_h u_h^k-g_h)
  =\tfrac{d_\tau}{2} \vert \Pi_hu_h^k-g_h\vert^2\,.
  \end{aligned}
  \end{align}
  Using \eqref{prop:stability.2} and \eqref{prop:stability.3} in \eqref{prop:stability.1}, for every $k\in \{1,\dots,L\}$, 
  we arrive at
  \begin{align}\label{prop:stability.4}
  \|d_\tau u_h^k\|_{L^2(\Omega)}^2+d_\tau I_{h,\varepsilon_h}^{cr}(u_h^k)\leq 0\,.
  \end{align}
  Summation of \eqref{prop:stability.4} with respect to $k\hspace{-0.1em}\in\hspace{-0.1em}\{1,\dots,L\}$, using ${\sum_{k=1}^L{\hspace{-0.1em}d_\tau I_{h,\varepsilon_h}^{cr}(u_h^k)}\hspace{-0.1em}=\hspace{-0.1em}I_{h,\varepsilon_h}^{cr}(u_h^L)\hspace{-0.1em}-\hspace{-0.1em}I_{h,\varepsilon_h}^{cr}(u_h^0)}$, yields the claimed stability estimate.

 \textit{ad (iii).} Due to (i), we have that $\|d_\tau u_h^k\|_{L^2(\Omega)}^2\to 0$ $(k\to \infty)$, i.e., by the finite-dimensionality of $\smash{\mathcal{S}^{1,cr}_D(\mathcal{T}_h)}$ and the equivalence of norms, it holds
 \begin{align}\label{prop:stability.5}
  u_h^k-u_h^{k-1}\to 0\quad \text{ in }\mathcal{S}^{1,cr}_D(\mathcal{T}_h)\quad (k\to \infty)\,.
 \end{align}
 In addition, due to (i), we have that $I_{h,\varepsilon_h}^{cr}(u_h^k)\leq I_{h,\varepsilon_h}^{cr}(u_h^0)$, which, using Lemma \ref{lem:properties}, implies~that
 $(u_h^k)_{k\in \mathbb{N}}\subseteq \mathcal{S}^{1,cr}_D(\mathcal{T}_h)$ is bounded. Due to the finite-dimensionality of $\mathcal{S}^{1,cr}_D(\mathcal{T}_h)$,~the~\mbox{Bolzano--Weier}-straß theorem yields a subsequence $(u_h^{k_l})_{l\in \mathbb{N}}\subseteq \mathcal{S}^{1,cr}_D(\mathcal{T}_h)$ and a function $\tilde{u}_h\in \mathcal{S}^{1,cr}_D(\mathcal{T}_h)$~such~that
 \begin{align}\label{prop:stability.6}
  u_h^{k_l}\to \tilde{u}_h\quad\text{ in }\mathcal{S}^{1,cr}_D(\mathcal{T}_h)\quad (l\to \infty)\,.
 \end{align}
 Due to \eqref{prop:stability.5}, from \eqref{prop:stability.6}, we deduce that
 \begin{align}\label{prop:stability.7}
  u_h^{k_l-1}\to \tilde{u}_h\quad\text{ in }\mathcal{S}^{1,cr}_D(\mathcal{T}_h)\quad (l\to \infty)\,.
 \end{align}
 As a result, using \eqref{prop:stability.5}--\eqref{prop:stability.7}, by passing for $l\to \infty$ in \eqref{alg:step_1}, for every $v_h\in \mathcal{S}^{1,cr}_D(\mathcal{T}_h)$, we obtain
 \begin{align}\label{prop:stability.8}
  \Big(\tfrac{f_{h,\varepsilon_h}'(\vert \nabla_{\!h}\tilde{u}_h\vert )}{\vert \nabla_{\!h}\tilde{u}_h\vert}\nabla_{\!h}\tilde{u}_h ,\nabla_{\!h}v_h \Big)_{\Omega}+\alpha \,(\Pi_h\tilde{u}_h-g_h,\Pi_hv_h)_{\Omega}=0\,,
 \end{align}
 and, by uniqueness, $\smash{\tilde{u}_h=u_{h,\varepsilon_h}^{cr}}$.
 Hence, using \eqref{prop:stability.5} and \eqref{prop:stability.8},  for every $v_h\in \mathcal{S}^{1,cr}_D(\mathcal{T}_h)$,~we~obtain
 \begin{align*}
  \big(r_h^{k_l},v_h\big)_{\Omega}&=\Big(\tfrac{f_{h,\varepsilon_h}'(\vert \nabla_{\!h}u_h^{k_l}\vert )}{\vert \nabla_{\!h}u_h^{k_l}\vert}\nabla_{\!h}u_h^{k_l},\nabla_{\!h}v_h \Big)_{\Omega}+\alpha \,(\Pi_hu_h^{k_l}-g_h,\Pi_hv_h)_{\Omega}
  \\&\to \Big(\tfrac{f_{h,\varepsilon_h}'(\vert \nabla_{\!h}u_{h,\varepsilon_h}^{cr}\vert )}{\vert \nabla_{\!h}u_{h,\varepsilon_h}^{cr}\vert}\nabla_{\!h}u_{h,\varepsilon_h}^{cr} ,\nabla_{\!h}v_h \Big)_{\Omega}+\alpha \,(\Pi_hu_{h,\varepsilon_h}^{cr}-g_h,\Pi_hv_h)_{\Omega}=0\quad(l\to \infty)\,,
 \end{align*}
 i.e., $r_h^{k_l}\rightharpoonup 0$ in $\mathcal{S}^{1,cr}_D(\mathcal{T}_h)$ $(l\to \infty)$, and, thus, by the finite-dimensionality of $\mathcal{S}^{1,cr}_D(\mathcal{T}_h)$,  $r_h^{k_l}\to 0$ in $\mathcal{S}^{1,cr}_D(\mathcal{T}_h)$ $(l\to \infty)$, which implies that $r_h^{k_l}\to 0$ in $L^2(\Omega)$ $(l\to \infty)$. As this~\mbox{argumentation}~remains valid for each subsequence of $(r_h^k)_{k\in \mathbb{N}}\subseteq \mathcal{S}^{1,cr}_D(\mathcal{T}_h)$, the standard convergence principle yields that $r_h^k\to 0$ in $L^2(\Omega)$ $(k\to \infty)$. In particular, there exists $k^*\in \mathbb{N}$  such that $\|r_h^{k^*}\|_{L^2(\Omega)}\leq \varepsilon^h_{\textit{stop}}$.
 \end{proof}

    \subsection{Implementation details regarding the adaptive mesh refinement procedure}\label{subsec:num_a_posteriori}\vspace{-1mm}\enlargethispage{8mm}
    
    \hspace{5mm}Before we present numerical experiments, we briefly outline the  details of the implementations regarding the adaptive mesh refinement procedure. 
    In general, we follow the \textit{adaptive algorithm}, cf.\ \cite{AO00,CKNS08,Ver13}:
    
	\begin{algorithm}[AFEM]\label{alg:afem}
		Let $\varepsilon_{\textup{STOP}}>0$, $\theta\in (0,1]$, and  $\mathcal{T}_0$ an initial  triangulation~of~$\Omega$, and 
        choose $\varepsilon_0\in \mathcal{L}^0(\mathcal{T}_0)$ such that $\varepsilon_0>0$ a.e.\ in $\Omega$ and $\|\varepsilon_0\|_{L^\infty(\Omega)}<1$.
  Then,~for~every~$i\in \mathbb{N}\cup\{0\}$:
	\begin{description}[noitemsep,topsep=1pt,labelwidth=\widthof{\textit{('Estimate')}},leftmargin=!,font=\normalfont\itshape]
		\item[('Solve')]\hypertarget{Solve}{}
		Approximate the regularized, discrete primal solution $u_i^{cr}\coloneqq u_{h_i,\varepsilon_i}^{cr} \in\mathcal{S}^{1,cr}_D(\mathcal{T}_i)$~mini-mizing \eqref{discrete-ROF-primal_reg}.  Post-process $u_i^{cr}\hspace{-0.1em}\in\hspace{-0.1em} \smash{\mathcal{S}^{1,\textit{cr}}_D(\mathcal{T}_i)}$ to obtain 
        an approximation ${\overline{u}_i^{cr}\hspace{-0.1em}\in\hspace{-0.1em} \mathcal{S}^{1,\textit{cr}}_D(\mathcal{T}_i)}$ with $\overline{u}_i^{cr}=0$ on $\partial\Omega$ and 
  a regularized, discrete dual solution $z_i^{rt}\coloneqq z_{h_i,\varepsilon_i}^{rt}\in \mathcal{R}T^0_N(\mathcal{T}_i)$ maximizing \eqref{discrete-ROF-dual_reg}. Then, define 
  \begin{align}\label{eq:projection_step}
      \overline{z}_i^{rt}\coloneqq \tfrac{z_i^{rt}}{\max\{1,\|z_i^{rt}\|_{L^\infty(\Omega;\mathbb{R}^d)}\}} \in \mathcal{R}T^0_N(\mathcal{T}_i)\,.
  \end{align}
		\item[('Estimate')]\hypertarget{Estimate}{} Compute the local refinement indicators $\smash{(\eta^2_{T,\textit{CR}}(\overline{u}_i^{cr},\overline{z}_i^{rt}))_{T\in \mathcal{T}_i}}$,~cf.~Remark~\ref{rem:representations}~(iv). If $\eta^2(\overline{u}_i^{cr},\overline{z}_i^{rt}) \leq \varepsilon_{\textup{STOP}}$, cf. Remark \ref{rem:representations} (iii), then \textup{STOP}; otherwise, continue~with step \textit{('Mark')}.
		\item[('Mark')]\hypertarget{Mark}{}  Choose a minimal (in terms of cardinality) subset $\mathcal{M}_i\subseteq\mathcal{T}_i$ such that
		\begin{align*}
			\sum_{T\in \mathcal{M}_i}{\eta_{T,\textit{CR}}^2(\overline{u}_i^{cr},\overline{z}_i^{rt})}\ge \theta^2\sum_{T\in \mathcal{T}_i}{\eta_{T,\textit{CR}}^2(\overline{u}_i^{cr},\overline{z}_i^{rt})}\,.
		\end{align*}
		\item[('Refine')]\hypertarget{Refine}{} Perform a conforming refinement of $\mathcal{T}_i$ to obtain $\mathcal{T}_{i+1}$~such~that~each $T\in \mathcal{M}_i$  is refined in $\mathcal{T}_{i+1}$.
        Then, construct $\varepsilon_{i+1}\in \mathcal{L}^0(\mathcal{T}_{i+1})$ such that
       $\varepsilon_{i+1}>0$ a.e.\ in $\Omega$ and  $\|\varepsilon_{i+1}\|_{L^\infty(\Omega)}<c_i\,h_{i+1}^2$. 
		Increase~$i\mapsto i+1$~and~continue~with~step~('Solve').
	\end{description}
	\end{algorithm}

    	\begin{remark}
			\begin{description}[noitemsep,topsep=1pt,labelwidth=\widthof{\textit{(iii)}},leftmargin=!,font=\normalfont\itshape]
				\item[(i)] The regularized, discrete  primal solution $u_i^{cr}\in\mathcal{S}^{1,\textit{cr}}_D(\mathcal{T}_i)$ in step (\hyperlink{Solve}{'Solve'}) is computed using 
                the semi-implicit discretized $L^2$-gradient flow, cf.\ Algorithm \ref{algorithm}, for fixed step-size ${\tau=1.0}$, stopping criterion $\varepsilon_{stop}^{h_i}\coloneqq \frac{h_i}{\sqrt{20}}$, and initial condition $u_i^0=0\in \mathcal{S}_D^{1,cr}(\mathcal{T}_i)$. Appealing to Proposition \ref{prop:stability} (ii), Algorithm \ref{algorithm} is unconditionally strongly stable, so that employing the fixed step-size ${\tau=1.0}$ is a reasonable choice. 
                The stopping~criterion~${\varepsilon_{stop}^{h_i}\hspace{-0.1em}\coloneqq \hspace{-0.1em}\frac{h_i}{\sqrt{20}}}$ ensures (cf.\  the argumentation below Algorithm~\ref{algorithm})~that~the final iterate $u_{h_i}^{k^*}\in \mathcal{S}^{1,cr}_D(\mathcal{T}_i)$ is a sufficiently accurate approximation of the discrete primal solution, in the sense 
                that its accuracy does not violate the best possible~linear~convergence~rate, cf. Remark \ref{rem:optimal_rate} (below).
                
                \item[(ii)] As an approximation $\overline{u}_i^{cr}\in \mathcal{S}^{1,\textit{cr}}_D(\mathcal{T}_i)$ with $\overline{u}_i^{cr}=0$ on $\partial\Omega$, we employ 
                \begin{align}
                    \overline{u}_i^{cr}\coloneqq \begin{cases}
                        u_i^{cr}&\text{ if }u_i^{cr}=0\text{ on }\partial\Omega\,,\\
                        I_k^{\partial} u_i^{cr}&\text{ else}\,,
                    \end{cases}
                \end{align}
                where the operator $I_i^{\partial}\colon \mathcal{S}^{1,\textit{cr}}(\mathcal{T}_i)\to \mathcal{S}^{1,\textit{cr}}_D(\mathcal{T}_i)$ for every $v_{h_i}\in\mathcal{S}^{1,\textit{cr}}(\mathcal{T}_i) $ is defined by
                \begin{align}
                    I_i^{\partial}v_i\coloneqq \sum_{S\in \mathcal{S}_{h_i};S\cap \partial\Omega=\emptyset}{v_{h_i}(x_S)\,\varphi_S}\,.
                \end{align}
                \item[(iii)] Note that the particular choices in (ii) are only due to the imposed homogeneous Dirichlet boundary condition. In the case $\Gamma_D\hspace{-0.1em}=\hspace{-0.1em}\emptyset$,  the choice $\overline{u}_i^{cr}\hspace{-0.1em}\coloneqq \hspace{-0.1em}u_i^{cr}\hspace{-0.1em}\in\hspace{-0.1em} \mathcal{S}^{1,cr}(\mathcal{T}_i)$ is always~admissible.
				\item[(iv)] If not otherwise specified, we employ the parameter $\theta=\smash{\frac{1}{2}}$ in (\hyperlink{Estimate}{'Mark'}).
				\item[(v)] To find the set $\mathcal{M}_i\subseteq \mathcal{T}_i$ in step (\hyperlink{Mark}{'Mark'}), we~deploy~the~D\"orfler marking strategy,~cf.~\cite{Doe96}.
				\item[(vi)] The \hspace*{-0.1mm}(minimal) \hspace*{-0.1mm}conforming \hspace*{-0.1mm}refinement \hspace*{-0.1mm}of \hspace*{-0.1mm}$\mathcal{T}_i$ \hspace*{-0.1mm}with \hspace*{-0.1mm}respect \hspace*{-0.1mm}to \hspace*{-0.1mm}$\mathcal{M}_i$~\hspace*{-0.1mm}in~\hspace*{-0.1mm}step \hspace*{-0.1mm}(\hyperlink{Refine}{'Refine'})~\hspace*{-0.1mm}is~\hspace*{-0.1mm}\mbox{obtained}~by deploying the \textit{red}-\textit{green}-\textit{blue}-refinement algorithm, cf.~\cite{Ver13}.
                \item[(vii)] For the construction of the adaptively modified regularization parameter $\varepsilon_i\in \mathcal{L}^0(\mathcal{T}_i)$ in step (\hyperlink{Refine}{'Refine'}), we employ separately the following two cases:
                \begin{align}
                 \varepsilon_i\coloneqq   \begin{cases}
                        \tfrac{\alpha}{d} \vert \Pi_{h_{i-1}} u_{i-1}^{cr}-g_{h_i}\vert  h_i^2 +  h_i^3&(\hypertarget{local}{\textcolor{blue}{local}})\,,\\
                        h_i^2&(\hypertarget{global}{\textcolor{blue}{global}})\,.
                    \end{cases}
                \end{align}
			\end{description}
	\end{remark}

    \newpage
    \subsection{Example with Lipschitz continuous dual solution}

    \hspace{5mm}We examine an example from \cite{BTW21}. In this example, we let $\Omega=(-1,1)^d$, $\Gamma_D=\partial\Omega$, $d\in \{2,3\}$, $r=\smash{\frac{1}{2}}$, $\alpha =10$, and $g=\chi_{B_r^d(0)}\in BV(\Omega)\cap L^\infty(\Omega)$. Then, the primal solution $u\in BV(\Omega)\cap L^\infty(\Omega)$~and a~dual~solution $z\in W^2(\textup{div};\Omega)\cap L^\infty(\Omega;\mathbb{R}^d)$, for a.e.\ $x\in \Omega$ are~defined~by
    \begin{align}\label{one_disk_primal_solution}
        u(x)&\coloneqq (1-\tfrac{d}{\alpha r})\,g(x)\,,\qquad
        z(x)\coloneqq\begin{cases}
            -\tfrac{x}{r}&\textup{ if }\vert x\vert < r\,,\\
            -\tfrac{rx}{\vert x\vert^d}&\textup{ if }\vert x\vert \geq r\,.
        \end{cases}
    \end{align}
    Note that $z\in W^{1,\infty}(\Omega;\mathbb{R}^d)$, so that, appealing to \cite{CP20,Bar21}, uniform mesh-refinement (i.e., $\theta=1$ in Algorithm \ref{alg:afem}) is expected to yield the quasi-optimal  convergence rate $\mathcal{O}(h^{\frac{1}{2}})$.

    \textit{2D Case.}
    The coarsest triangulation $\mathcal{T}_0$ of Figure \ref{fig:OneDisk_triang} (initial triangulation of Algorithm~\ref{alg:afem}) consists of $16$ halved squares. More precisely, Figure \ref{fig:OneDisk_triang} displays
    the triangulations $\mathcal{T}_i$,~${i\in \{0,15,20,25\}}$, generated by  Algorithm \ref{alg:afem}
    using either the adaptively modified $\varepsilon_i\in \mathcal{L}^0(\mathcal{T}_i)$, cf.\ (\hyperlink{local}{local}), or the global choice $\varepsilon_i\coloneqq h_i^2$, cf.\ (\hyperlink{global}{global}). For both choices,
    a refinement towards the circle $\partial B_r^2(0)$, i.e., the jump set $J_u$ of the exact solution $u\in BV(\Omega)\cap L^\infty(\Omega)$, cf.\ \eqref{one_disk_primal_solution}, is reported.
    This behavior is also seen in Figure \ref{fig:OneDisk_solution}, where the regularized, discrete primal~solution~$u_{15}^{\textit{\textrm{cr}}}\in \mathcal{S}^{1,\textit{\textrm{cr}}}_D(\mathcal{T}_{15})$, the (local)
    $L^2$-projection onto element-wise constant functions 
    $\Pi_{h_{15}} u_{15}^{\textit{\textrm{cr}}}\in \mathcal{L}^0(\mathcal{T}_{15})$, and
    the (local) $L^2$-projections onto element-wise affine functions of 
    the modulus of the regularized, discrete dual solution $z_{15}^{\textit{\textrm{rt}}}\in \mathcal{R}T^0_N(\mathcal{T}_{15})$ and of the projected regularized, discrete dual solution $\overline{z}_{15}^{\textit{\textrm{rt}}}\in \mathcal{R}T^0_N(\mathcal{T}_{15})$ are plotted. Figure \ref{fig:OneDisk_triang}, in addition, shows that using the adaptively modified $\varepsilon_i\in \mathcal{L}^0(\mathcal{T}_i)$, cf.\  (\hyperlink{local}{local}), the refinement is more concentrated at the jump set  $J_u$ of the exact solution $u\in BV(\Omega)\cap L^\infty(\Omega)$, cf.\ \eqref{one_disk_primal_solution}. However, in Figure \ref{fig:OneDisk_rate} it is seen that (\hyperlink{local}{local}) does not result in an improved error decay, but an error decay comparable to  (\hyperlink{global}{global}). In addition, 
    Figure \ref{fig:OneDisk_rate} demonstrates that Algorithm~\ref{alg:afem} improves the experimental convergence rate of about $\mathcal{O}(h^{\frac{1}{2}})$ predicted by \cite{CP20,Bar21} for uniform mesh-refinement to the quasi-optimal rate $\mathcal{O}(h)$, cf.~Remark~\ref{rem:optimal_rate}~(below).~In~addition, Figure \ref{fig:OneDisk_rate} indicates the primal-dual error estimator~is~reliable~and~efficient~with~respect~to~the~error~quantity
    \begin{align}\label{eq:reduced_rho}
        \tilde{\rho}^2(\overline{u}_i^{\textit{\textrm{cr}}},\overline{z}_i^{\textit{\textrm{rt}}})\coloneqq \tfrac{\alpha}{2}\|\overline{u}_i^{\textit{\textrm{cr}}}-u\|^2_{L^2(\Omega)}+\tfrac{1}{2\alpha}\|\textup{div}\,\overline{z}_i^{\textit{\textrm{rt}}}-\textup{div}\,z\|^2_{L^2(\Omega)}\,,\quad i\in \mathbb{N}\,,
    \end{align}
    which, appealing to Remark \ref{rmk:examples} (iv), is a lower bound for sum of the optimal convexity measures.\vspace{-1mm}

    \begin{figure}[H]
        \centering
          \hspace*{-2mm}\includegraphics[width=15cm]{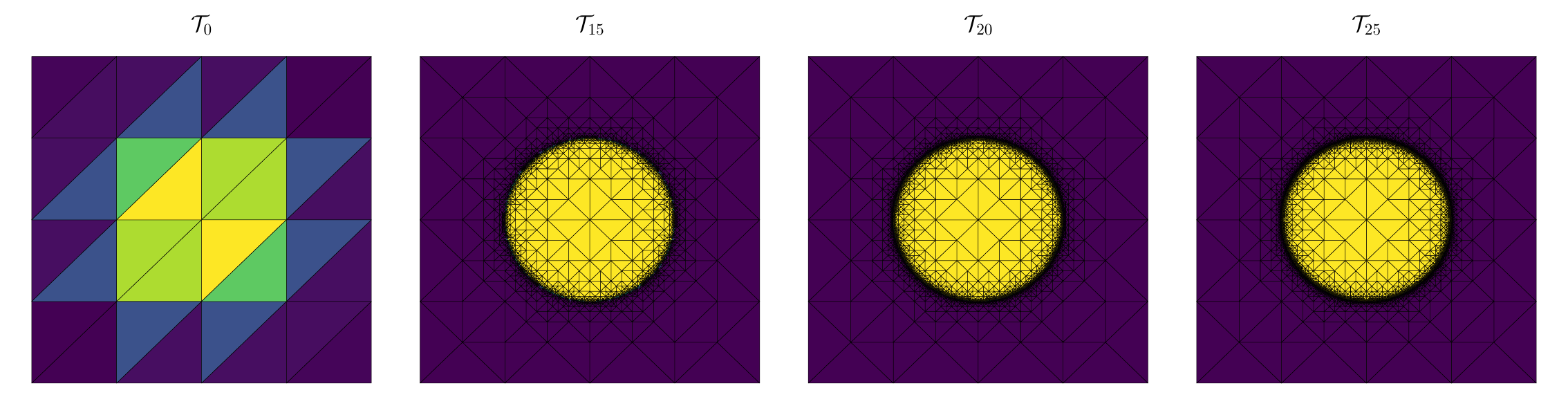}\vspace{-2.5mm}
        \hspace*{-2mm}\includegraphics[width=15cm]{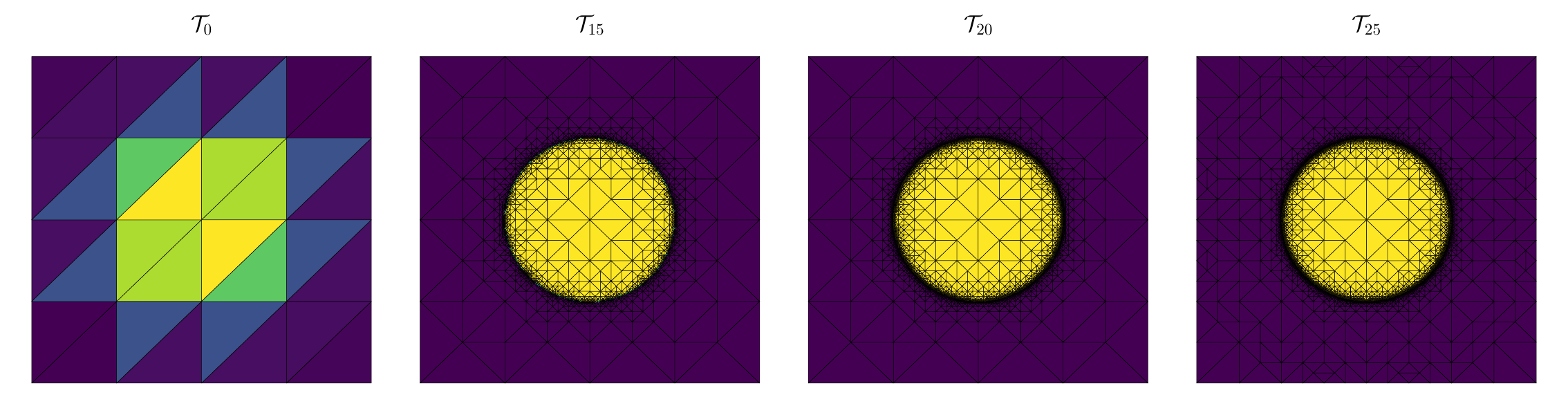}\vspace{-2.5mm}
        \caption{Initial triangulation $\mathcal{T}_0$ and 
        adaptively refined meshes $\mathcal{T}_i$, $i\in \{0,15,20,25\}$, generated by the adaptive Algorithm \ref{alg:afem} (TOP: obtained using (\protect\hyperlink{local}{local}); BOTTOM: obtained using (\protect\hyperlink{global}{global})).}
        \label{fig:OneDisk_triang}
    \end{figure}

    \begin{figure}[H]
        \centering
        \hspace*{-5mm}\includegraphics[width=14cm]{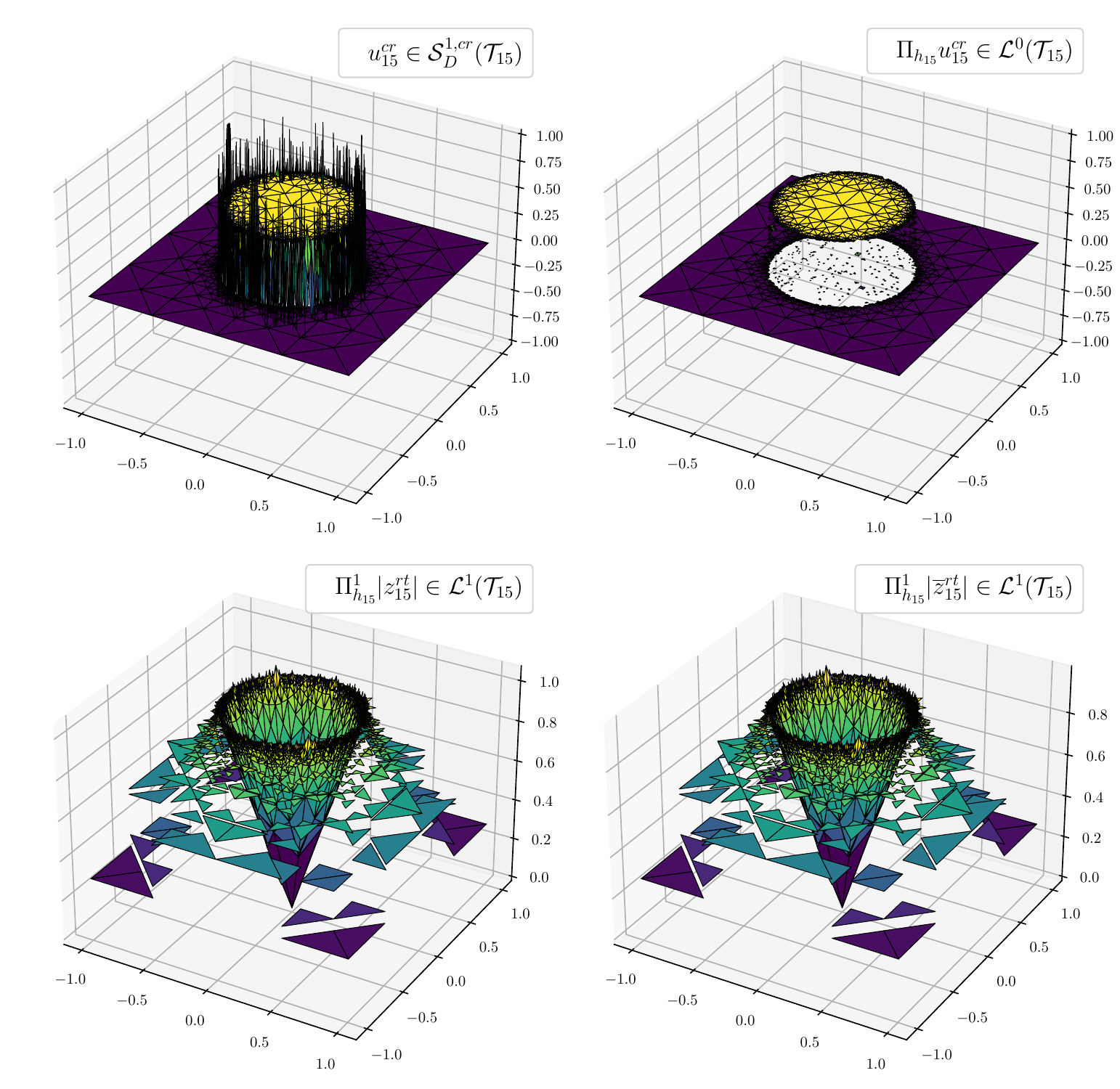}\vspace{-2.5mm}
        \caption{UPPER LEFT: Plot of $u_{15}^{\textit{\textrm{cr}}}\hspace{-0.1em}\in \hspace{-0.1em}\mathcal{S}^{1,\textit{\textrm{cr}}}_D(\mathcal{T}_{15})$, UPPER RIGHT: Plot of $\Pi_{h_{15}}u_{15}^{\textit{\textrm{cr}}}\hspace{-0.1em}\in\hspace{-0.1em} \mathcal{L}^0(\mathcal{T}_{15})$; LOWER LEFT: Plot of  $\Pi_{h_{15}}^1\vert z_{15}^{\textit{\textrm{rt}}}\vert \hspace{-0.1em}\in \hspace{-0.1em}\mathcal{L}^1(\mathcal{T}_{15})$; LOWER RIGHT:  Plot of  $\Pi_{h_{15}}^1\vert \overline{z}_{15}^{\textit{\textrm{rt}}}\vert\in \mathcal{L}^1(\mathcal{T}_{15})$; each obtained using (\protect\hyperlink{local}{local}).}
        \label{fig:OneDisk_solution}
    \end{figure}\vspace{-5mm}\enlargethispage{7mm}

    \begin{figure}[H]
        \centering
        \hspace*{-2mm}\includegraphics[width=14.5cm]{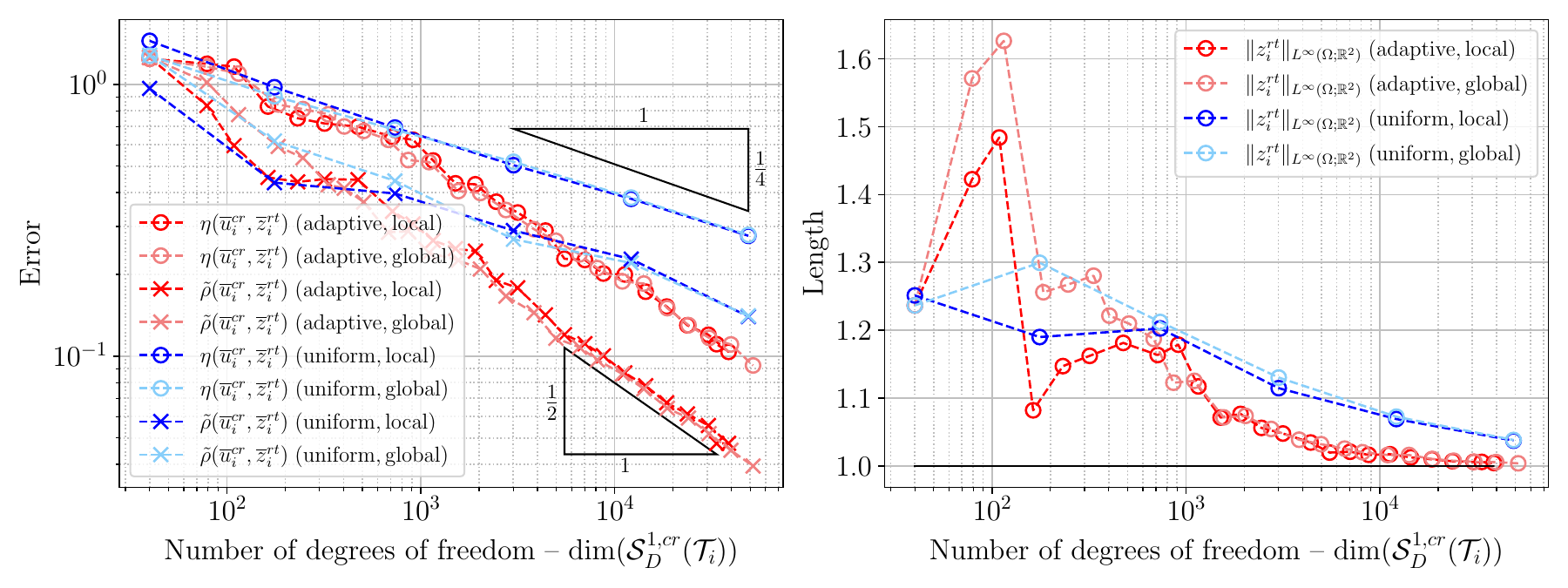}\vspace{-2.5mm}
        \caption{LEFT: Plots of $\eta(\overline{u}_i^{\textit{\textrm{cr}}},\overline{z}_i^{\textit{\textrm{rt}}})$ and $\tilde{\rho}(\overline{u}_i^{\textit{\textrm{cr}}},\overline{z}_i^{\textit{\textrm{rt}}})$ using adaptive mesh refinement for $i=0,\dots,25$ and uniform mesh refinement for $i=0,\dots, 5$; RIGHT: Plots of  $\|\overline{z}_i^{\textit{\textrm{rt}}}\|_{L^\infty(\Omega;\mathbb{R}^2)}$  using adaptive mesh refinement for $i=0,\dots,25$ and uniform mesh refinement for  $i=0,\dots, 5$.}
        \label{fig:OneDisk_rate}
    \end{figure}

    \textit{3D Case.} The initial triangulation  $\mathcal{T}_0$  of Algorithm~\ref{alg:afem} consists of $27$  cubes each divided into six tetrahedrons. Using either the adaptively modified $\varepsilon_i\in \mathcal{L}^0(\mathcal{T}_i)$, cf.\ (\hyperlink{local}{local}), or the global choice $\varepsilon_i\coloneqq h_i^2$, cf.\ (\hyperlink{global}{global}), we report similar results to~the~2D~case:~for~both~choices,
    a refinement towards the sphere $\partial B_r^3(0)$, i.e., the jump set $J_u$ of the exact solution ${u\in BV(\Omega)\cap L^\infty(\Omega)}$,~cf.~\eqref{one_disk_primal_solution},~is~re-ported, \hspace{-0.15mm}which \hspace{-0.15mm}can \hspace{-0.15mm}be \hspace{-0.15mm}seen
    \hspace{-0.15mm}in \hspace{-0.15mm}Figure \hspace{-0.15mm}\ref{fig:OneDisk3D_solution}, \hspace{-0.15mm}where \hspace{-0.15mm}the \hspace{-0.15mm}regularized,~\hspace{-0.15mm}discrete~\hspace{-0.15mm}primal~\hspace{-0.15mm}solution~$\hspace{-0.1mm}{u_{10}^{\textit{\textrm{cr}}}\hspace{-0.15em}\in \hspace{-0.15em}\mathcal{S}^{1,\textit{\textrm{cr}}}_D(\mathcal{T}_{10})}$ and 
    the (local) $L^2$-projection onto element-wise affine functions of 
    the modulus of the regularized, discrete dual solution $z_{10}^{\textit{\textrm{rt}}}\in \mathcal{R}T^0_N(\mathcal{T}_{10})$ are plotted. 
    Figure \ref{fig:OneDisk_rate} shows that the adaptive Algorithm~\ref{alg:afem} improves the experimental convergence rate of about $\mathcal{O}(\smash{h^{\frac{1}{2}}})$ predicted by \cite{CP20,Bar21} for uniform mesh-refinement to the quasi-optimal rate $\mathcal{O}(h)$, cf. Remark \ref{rem:optimal_rate} (below).\vspace{-0.4cm}
    
    \begin{figure}[H]
        \centering
        \hspace*{-5mm}
        \includegraphics[width=5.75cm]{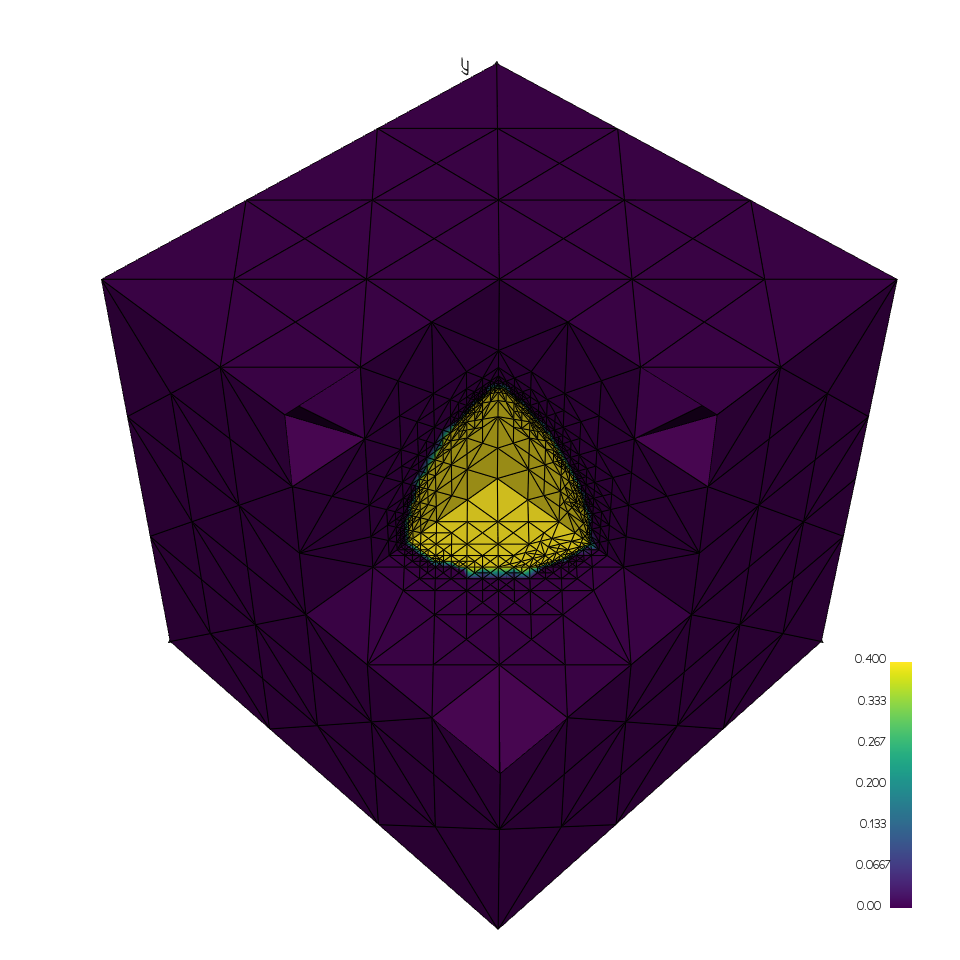}
        \hspace{5mm}\includegraphics[width=5.75cm]{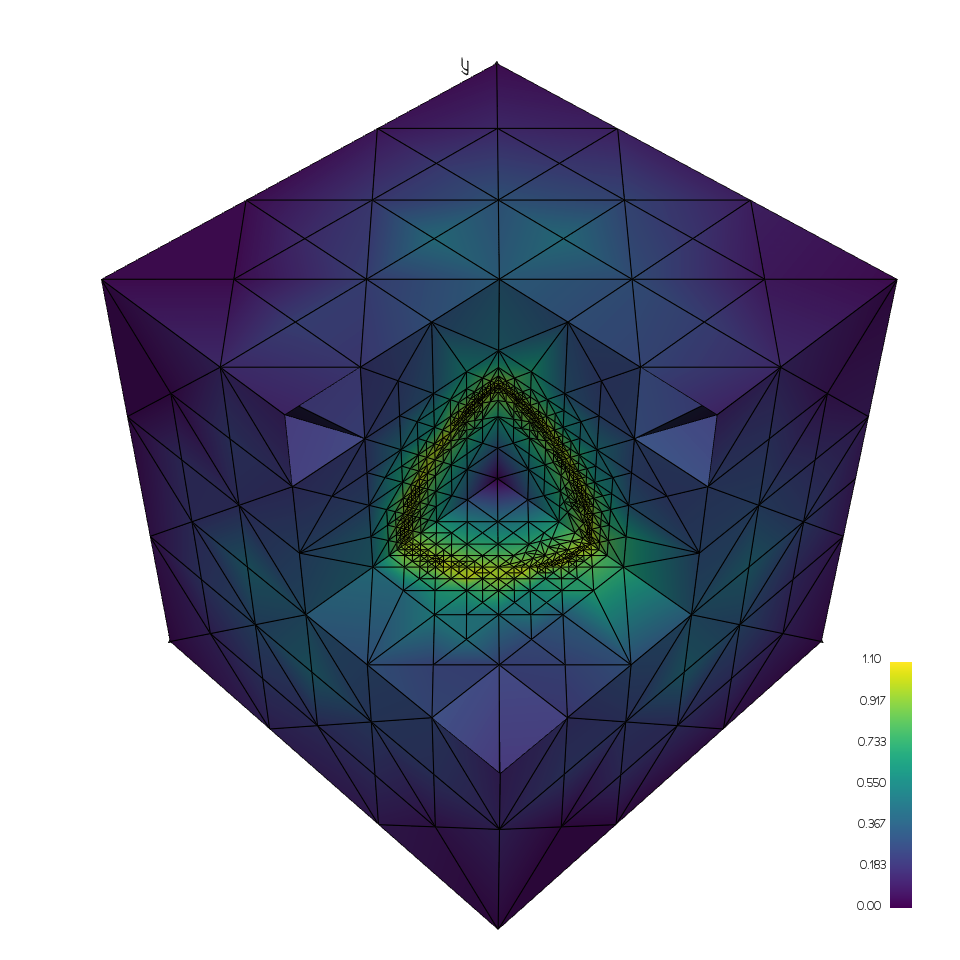}\vspace{-1mm}
        \caption{LEFT: Plot of $u_{10}^{\textit{\textrm{cr}}}\in\mathcal{S}^{1,\textit{\textrm{cr}}}_D(\mathcal{T}_{10})$; RIGHT: Plot of ${\Pi_{h_{10}}^1\vert z_{10}^{\textit{\textrm{rt}}}\vert \in \mathcal{L}^1(\mathcal{T}_{10})}$; each obtained using (\protect\hyperlink{local}{local}).}
        \label{fig:OneDisk3D_solution}
    \end{figure}\vspace{-5mm}\enlargethispage{12.5mm}

    \begin{figure}[H]
        \centering
        \hspace*{-2mm}\includegraphics[width=14.5cm]{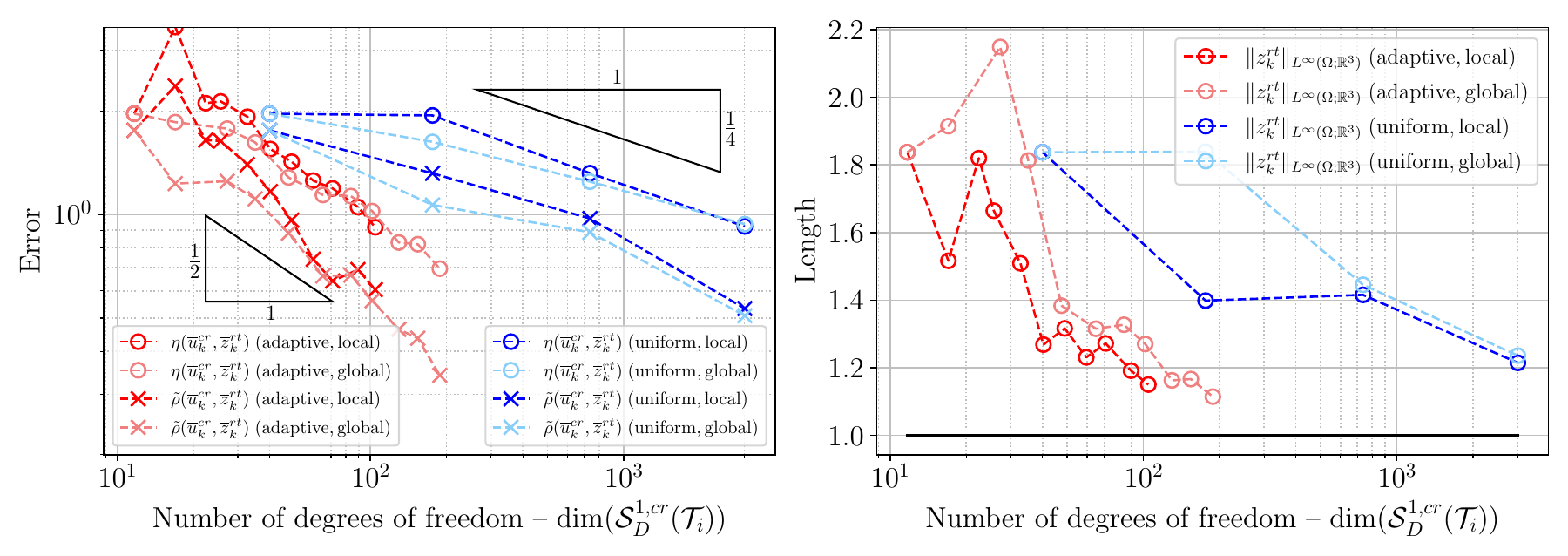}\vspace{-2.5mm}
        \caption{LEFT: Plots of $\eta(\overline{u}_i^{\textit{\textrm{cr}}},\overline{z}_i^{\textit{\textrm{rt}}})$ and $\tilde{\rho}(\overline{u}_i^{\textit{\textrm{cr}}},\overline{z}_i^{\textit{\textrm{rt}}})$ using adaptive mesh refinement for $i=0,\dots,10$ and uniform mesh refinement for $i=0,\dots, 3$; RIGHT: Plots of  $\|\overline{z}_i^{\textit{\textrm{rt}}}\|_{L^\infty(\Omega;\mathbb{R}^3)}$  using adaptive mesh refinement for $i=0,\dots,10$ and uniform mesh refinement for  $i=0,\dots, 3$.}
        \label{fig:OneDisk3D_rate}
    \end{figure}\vspace{-3mm}

    \begin{remark}[A Comment on the optimality of linear convergence rates]\label{rem:optimal_rate}  In one dimension, the $L^2$-best-approximation error of the sign function on quasi-uniform
partitions is of order $\mathcal{O}(\smash{h^{\frac{1}{2}}})$, cf.\ \cite[Example 10.5]{Bar15}. More generally, using that the
intersection $BV(\Omega) \cap L^\infty(\Omega)$ is contained in
fractional Sobolev spaces $W^{s,2}(\Omega)$ for all $s<1/2$,
cf.~\cite[Lemma 38.1]{Tart07-book}, one cannot expect a higher convergence rate
than $\mathcal{O}(\smash{h^{\frac{1}{2}}})$ for generic, essentially bounded functions~of~bounded~variation. For triangulations that are graded towards the jump
sets of certain discontinuous functions with a quadratic grading
strength, i.e., the local mesh-size satisfies
$h_T \sim h^2$ for all elements $T\in \mathcal{T}_h$ at the discontinuity set, with the average mesh-size $h\sim\textup{card}(\mathcal{N}_h)^{-1/d}$, a linear
convergence rate $\mathcal{O}(h)$ has been established in~\cite{BTW21}. Since our
error estimates not only bound squared $L^2$-errors but also control
squares of $L^p$-norms of non-linear error quantities involving~derivatives,~cf.~\mbox{\cite[Remark~5.4]{BTW21}}, a higher convergence rate than linear cannot be expected.
In view of these aspects, the linear convergence rate $\mathcal{O}(h)$ for
the devised adaptive strategy is quasi-optimal.
    \end{remark}

    \pagebreak
    
    \subsection{Example without Lipschitz continuous dual solution}\enlargethispage{3mm}

    \hspace{5mm}We examine an example from \cite{BTW21}. In this example, we let $\Omega=(-1.5,1.5)^2$, $\Gamma_D=\partial\Omega$, $r=\smash{\frac{1}{2}}$, $\alpha =10$, and $g=\chi_{B_r^2(r\mathrm{e}_1)}-\chi_{B_r^2(-r\mathrm{e}_1)}\in BV(\Omega)\cap L^\infty(\Omega)$. Then, the primal solution $u\in BV(\Omega)\cap L^\infty(\Omega)$ and a dual solution $z\in W^2(\textup{div};\Omega)\cap L^\infty(\Omega;\mathbb{R}^2)$, for a.e. $x\in \Omega$ are~defined~by
    \begin{align}\label{two_disk_primal_solution}
        u(x)\coloneqq (1-\tfrac{2}{\alpha r})\,g(x)\,,\qquad 
        z(x)\coloneqq\begin{cases}
            \mp\tfrac{x\mp r \mathrm{e}_1}{r}&\textup{ if }\vert x\mp r \mathrm{e}_1\vert < r\,,\\
            \mp\tfrac{r(x\mp r \mathrm{e}_1)}{\vert x\mp r \mathrm{e}_1\vert^2}&\textup{ if }\vert x\mp r \mathrm{e}_1\vert \geq r\,.
        \end{cases}
    \end{align}
    Note that $z\notin W^{1,\infty}(\Omega;\mathbb{R}^2)$, so that we cannot refer to \cite{CP20,Bar21} in order~to~expect~uniform~mesh-refinement to yield the convergence rate $\mathcal{O}(h^{\frac{1}{2}})$.
    However, since $z|_{\Omega^{\pm}}\in  W^{1,\infty}(\Omega^{\pm};\mathbb{R}^2)$, where $\Omega^+\hspace{-0.1em}\coloneqq \hspace{-0.1em} \Omega\cap (\mathbb{R}_{>0}\times\mathbb{R})$ and  $\Omega^-\hspace{-0.1em}\coloneqq\hspace{-0.1em} \Omega\cap (\mathbb{R}_{<0}\times\mathbb{R})$, and since the coarsest triangulation $\mathcal{T}_0$~of~Figure~\ref{fig:TwoDisk_triang} and, hence, also all resulting refinements $\mathcal{T}_i$, $i\in \mathbb{N}$, of $\mathcal{T}_0$ resolve $J_z\coloneqq \Omega\cap (\{0\}\times\mathbb{R})$, i.e., the jump set of
 $z\in W^2(\textup{div};\Omega)\cap L^\infty(\Omega;\mathbb{R}^2)$, in the sense that $J_z\subseteq \bigcup_{S\in \mathcal{S}_{h_i}}{S}$ for all $i\in \mathbb{N}$,
  referring to \cite[Theorem 4.5]{BKROF22}, we can expect uniform mesh-refinement to yield the convergence rate $\mathcal{O}(h^{\smash{\frac{1}{2}}})$.

The coarsest triangulation $\mathcal{T}_0$ of Figure \ref{fig:TwoDisk_triang} (initial triangulation of Algorithm \ref{alg:afem})~consists~of~$16$ halved squares. More precisely, Figure \ref{fig:OneDisk_triang} displays
    the triangulations $\mathcal{T}_i$, $i\!\in\! \{0,15,20,25\}$,~generated by  Algorithm \ref{alg:afem}
    using either the adaptively modified $\varepsilon_i\!\in\! \mathcal{L}^0(\mathcal{T}_i)$,~cf.~(\hyperlink{local}{local}), or the global choice $\varepsilon_i\coloneqq h_i^2$, cf. (\hyperlink{global}{global}). For both choices,
    a refinement towards ${\partial B_r^2(r\mathrm{e}_1)\cup \partial B_r^2(-r\mathrm{e}_1)}$, i.e., the jump set $J_u$ of the exact solution $u\in BV(\Omega)\cap L^\infty(\Omega)$, cf.\ \eqref{two_disk_primal_solution}, is reported.
    This behavior is also seen in Figure \ref{fig:TwoDisk_solution}, where the regularized, discrete primal solution~$u_{15}^{\textit{\textrm{cr}}}\in \mathcal{S}^{1,\textit{\textrm{cr}}}_D(\mathcal{T}_{15})$, the (local)
    $L^2$-projection onto element-wise constant functions 
    $\Pi_{h_{15}} u_{15}^{\textit{\textrm{cr}}}\in \mathcal{L}^0(\mathcal{T}_{15})$, and 
    the (local) $L^2$-projections onto element-wise affine functions of 
    the modulus of the regularized, discrete dual~solution $z_{15}^{\textit{\textrm{rt}}}\in \mathcal{R}T^0_N(\mathcal{T}_{15})$ and of the scaled regularized, discrete dual solution~${\overline{z}_{15}^{\textit{\textrm{rt}}}\in \mathcal{R}T^0_N(\mathcal{T}_{15})}$~are~plotted. Figure \ref{fig:TwoDisk_triang}, in addition, shows that employing the adaptively modified regularization~\mbox{parameter}, cf.\  (\hyperlink{local}{local}), the refinement is more concentrated at the jump set  $J_u$ of the exact solution $u\in BV(\Omega)\cap L^\infty(\Omega)$,~cf.~\eqref{two_disk_primal_solution}. However, in Figure \ref{fig:TwoDisk_rate} it can be seen that (\hyperlink{local}{local}) does not result in an improved error decay, but an error decay comparable to  (\hyperlink{global}{global}). In addition, Figure \ref{fig:TwoDisk_rate} demonstrates that  Algorithm \ref{alg:afem} improves the experimental convergence rate of about $\mathcal{O}(h^{\smash{\frac{1}{2}}})$ predicted by \cite[Theorem~4.5]{BKROF22} for uniform mesh-refinement to the quasi-optimal~rate~$\mathcal{O}(h)$,~cf.~\mbox{Remark}~\ref{rem:optimal_rate}. In addition, Figure \ref{fig:TwoDisk_rate} indicates the primal-dual error estimator is both reliable and efficient~with~respect~to~the~error~quantity~\eqref{eq:reduced_rho}.\vspace{-1mm}

    \begin{figure}[H]
        \centering
        \hspace*{-2mm}\includegraphics[width=15cm]{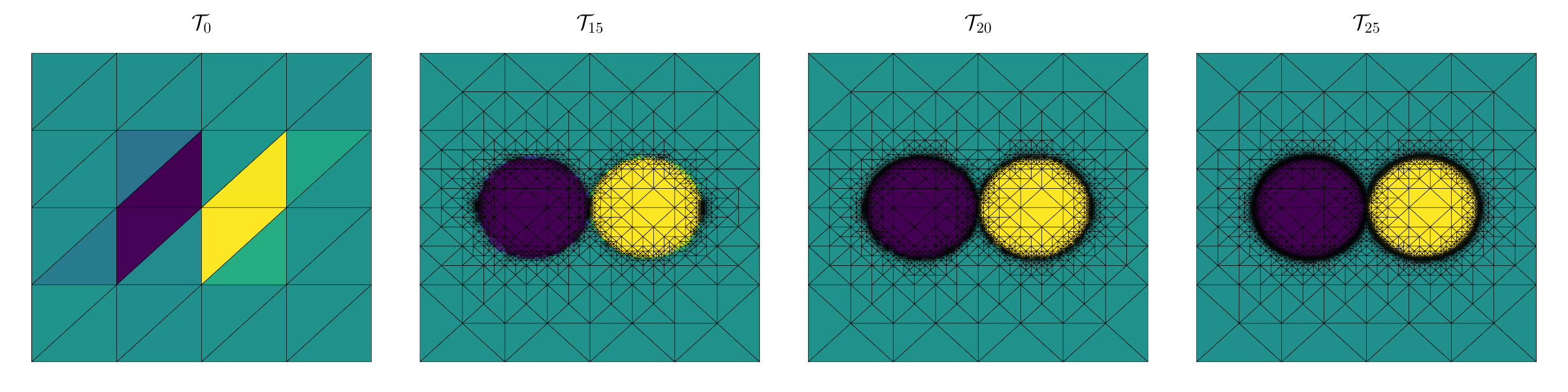}\vspace{-2.5mm}
        \hspace*{-2mm}\includegraphics[width=15cm]{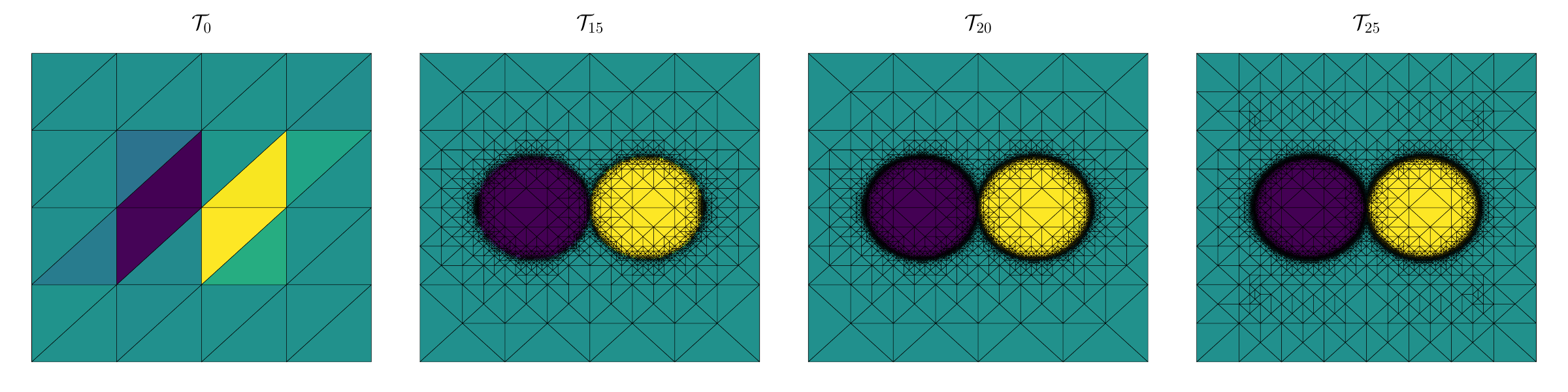}\vspace{-2.5mm}
        \caption{Initial triangulation $\mathcal{T}_0$ and 
        adaptively refined meshes $\mathcal{T}_i$, $i\in \{0,10,20,25\}$, generated by the adaptive Algorithm \ref{alg:afem} (TOP: obtained using (\protect\hyperlink{local}{local}); BOTTOM: obtained using (\protect\hyperlink{global}{global})).}
        \label{fig:TwoDisk_triang}
    \end{figure}

    \begin{figure}[H]
        \centering
        \hspace*{-5mm}\includegraphics[width=14cm]{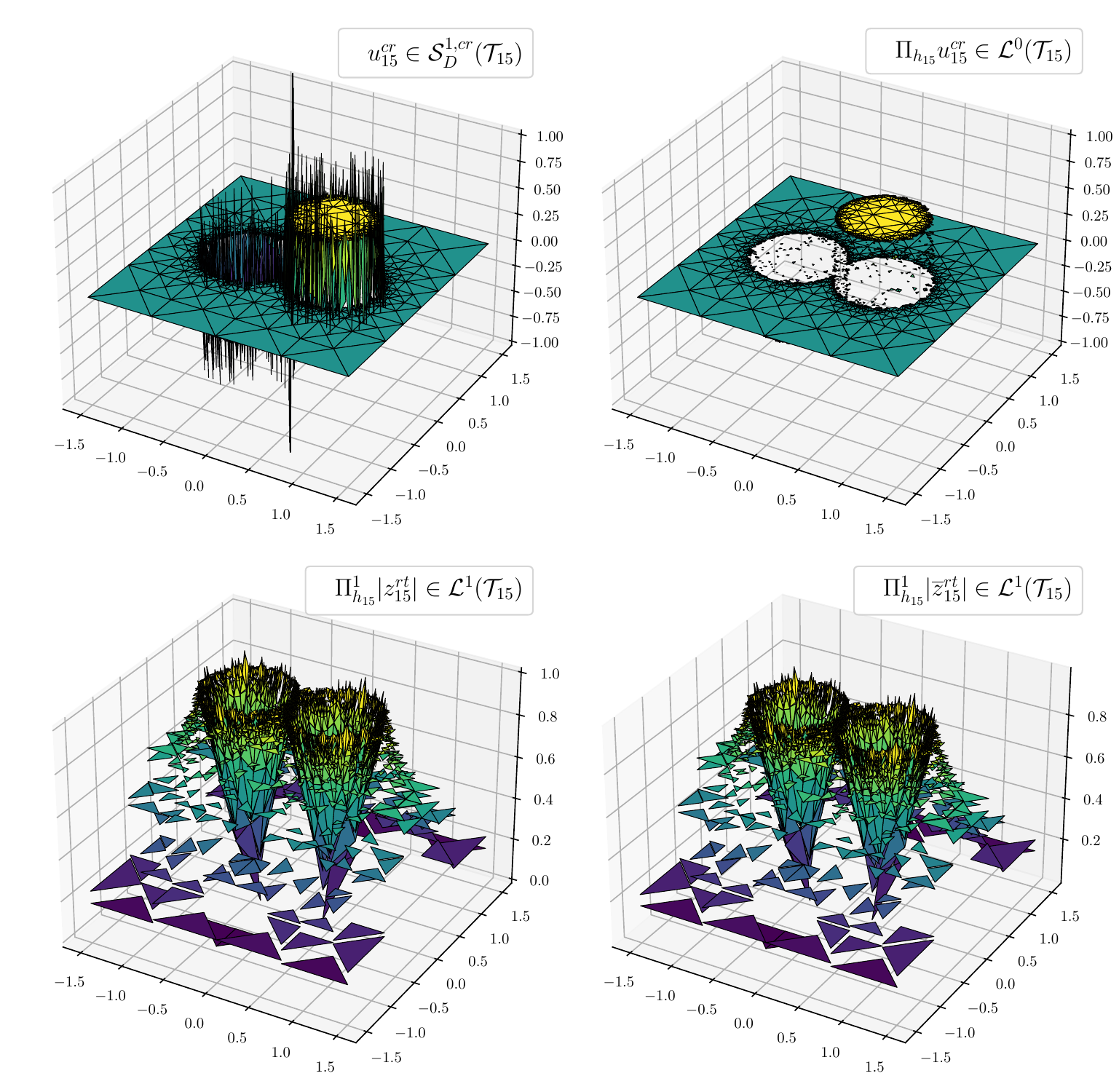}\vspace{-2.5mm}
        \caption{UPPER LEFT: Plot of $u_{15}^{\textit{\textrm{cr}}}\hspace{-0.1em}\in \hspace{-0.1em}\mathcal{S}^{1,\textit{\textrm{cr}}}_D(\mathcal{T}_{15})$, UPPER RIGHT: Plot of $\Pi_{h_{15}}u_{15}^{\textit{\textrm{cr}}}\hspace{-0.1em}\in\hspace{-0.1em} \mathcal{L}^0(\mathcal{T}_{15})$; LOWER LEFT: Plot of  $\Pi_{h_{15}}^1\vert z_{15}^{\textit{\textrm{rt}}}\vert \hspace{-0.1em}\in \hspace{-0.1em}\mathcal{L}^1(\mathcal{T}_{15})$; LOWER RIGHT:  Plot of  $\Pi_{h_{15}}^1\vert \overline{z}_{15}^{\textit{\textrm{rt}}}\vert\in \mathcal{L}^1(\mathcal{T}_{15})$; each obtained using (\protect\hyperlink{local}{local}).}
        \label{fig:TwoDisk_solution}
    \end{figure}\vspace{-5mm}\enlargethispage{7mm}

    \begin{figure}[H]
        \centering
        \hspace*{-2mm}\includegraphics[width=14.5cm]{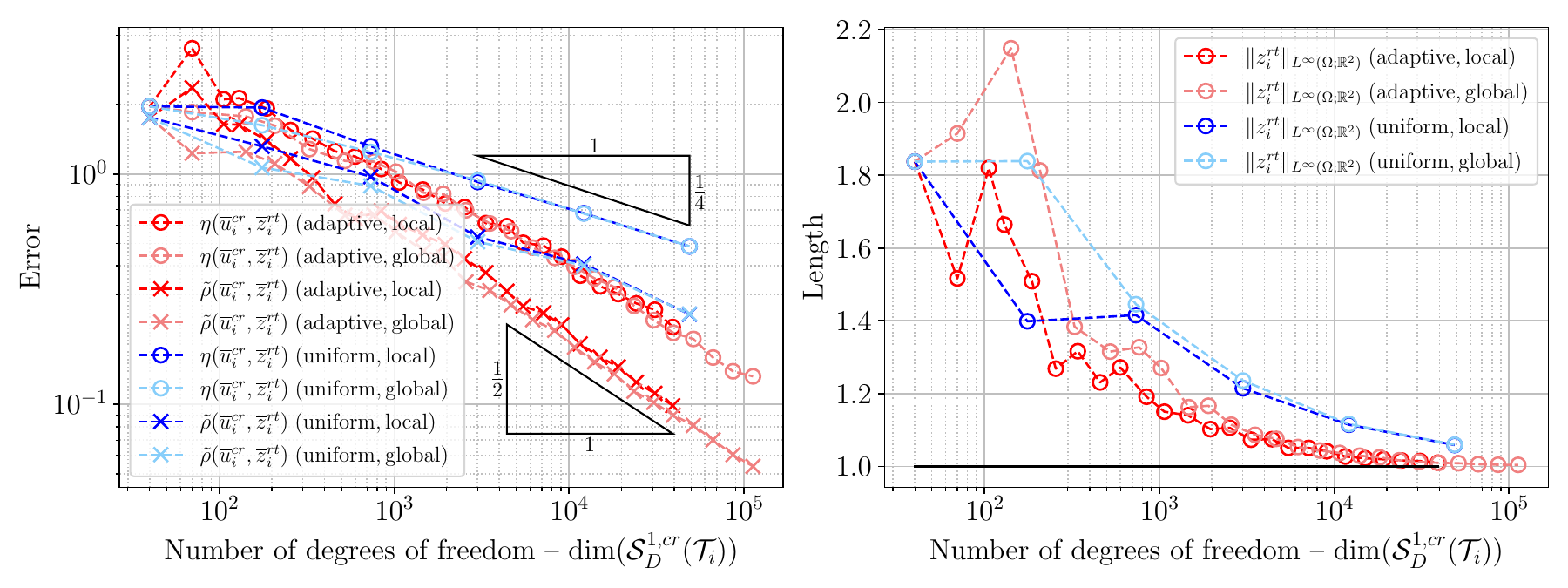}\vspace{-2.5mm}
        \caption{LEFT: Plots of $\eta(\overline{u}_i^{\textit{\textrm{cr}}},\overline{z}_i^{\textit{\textrm{rt}}})$ and $\tilde{\rho}(\overline{u}_i^{\textit{\textrm{cr}}},\overline{z}_i^{\textit{\textrm{rt}}})$ using adaptive mesh refinement for $i=0,\dots,25$ and uniform mesh refinement for $i=0,\dots, 5$; RIGHT: Plots of  $\|\overline{z}_i^{\textit{\textrm{rt}}}\|_{L^\infty(\Omega;\mathbb{R}^2)}$  using adaptive mesh refinement for $i=0,\dots,25$ and uniform mesh refinement for  $i=0,\dots, 5$.}
        \label{fig:TwoDisk_rate}
    \end{figure}

    \subsection{Example with Lipschitz continuous primal solution and Lipschitz continuous dual solution}

    \hspace{5mm}We examine an example from \cite{BDN18}. In this example, we let $\Omega=(-1.5,1.5)^2$, $\Gamma_D=\partial\Omega$,~$\alpha =10$,   $s(t)\coloneqq \sqrt{3t}$ and $r(t)\coloneqq \frac{1}{2}\sqrt{1-4t}$ for $t=0.1$, and $g\in BV(\Omega)\cap L^\infty(\Omega)$ for a.e.\ $x\in \Omega$, be~defined~by\enlargethispage{2mm}
    \begin{align*}
        g(x)\coloneqq \begin{cases}
            1 +\frac{2-\alpha(s(t)^2+t)}{s(t)}&\text{ if }\vert x\vert\leq s(t)\,,\\
            1 +\frac{1-\alpha(\vert x\vert^2+t)}{\vert x\vert}&\text{ if }s(t)<\vert x\vert\leq r(t)\,,\\
            0&\text{ else}\,.
        \end{cases}
    \end{align*}
    Then, the primal solution $u\in BV(\Omega)\cap L^\infty(\Omega)$ and a dual solution $z\in W^2(\textup{div};\Omega)\cap L^\infty(\Omega;\mathbb{R}^2)$ with $\vert z\vert\leq 1$ a.e.\ in $\Omega$, for a.e. $x\in \Omega$ are defined by
    \begin{align}\label{cone_primal_solution}
        u(x)\coloneqq \begin{cases}
            1 - \frac{s(t)^2+t}{s(t)}&\text{ if }\vert x\vert\leq s(t)\,,\\
            1 -\frac{\vert x\vert^2+t}{\vert x\vert}&\text{ if }s(t)<\vert x\vert\leq r(t)\,,\\
            0&\text{ else}\,,
        \end{cases}\quad 
        z(x)\coloneqq\begin{cases}
           -\frac{x}{s(t)}&\text{ if }\vert x\vert\leq s(t)\,,\\
           -\frac{x}{\vert x\vert}&\text{ if }s(t)<\vert x\vert\leq r(t)\,,\\
          -\frac{xr(t)}{\vert x\vert^2}&\text{ else}\,.
        \end{cases}
    \end{align}
    Note that $z\in \smash{W^{1,\infty}(\Omega;\mathbb{R}^2)}$, so that, appealing to \cite{CP20,Bar21}, uniform mesh-refinement is expected to yield the quasi-optimal convergence rate $\mathcal{O}(\smash{h^{\frac{1}{2}}})$.

    The coarsest triangulation $\mathcal{T}_0$ of Figure \ref{fig:Cone_triang} (initial triangulation of Algorithm \ref{alg:afem})~consists~of~$16$ halved squares. More precisely, Figure \ref{fig:Cone_triang} displays
    the triangulations $\mathcal{T}_i$, $i\!\in\! \{0,5,10,15\}$,~generated by Algorithm \ref{alg:afem}
    employing either  $\varepsilon_i\in \mathcal{L}^0(\mathcal{T}_i)$,~cf.~(\hyperlink{local}{local}), or  $\varepsilon_i\coloneqq h_i^2$, cf. (\hyperlink{global}{global}). For both choices,
    a refinement mainly towards~and~on~the~set $\{\vert \nabla u\vert >0\}$  is reported.
    This is also seen in Figure \ref{fig:Cone_solution}, where the regularized, discrete primal solution $u_{15}^{\textit{\textrm{cr}}}\in \mathcal{S}^{1,\textit{\textrm{cr}}}_D(\mathcal{T}_{10})$, the (local)
    $L^2$-projection onto element-wise constant functions 
    $\Pi_{h_{10}} u_{10}^{\textit{\textrm{cr}}}\in \mathcal{L}^0(\mathcal{T}_{10})$, and
    the (local) $L^2$-projections onto element-wise affine functions of 
    the modulus of the regularized, discrete dual solution $z_{10}^{\textit{\textrm{rt}}}\in \mathcal{R}T^0_N(\mathcal{T}_{10})$ and of the scaled regularized, discrete dual solution $\overline{z}_{10}^{\textit{\textrm{rt}}}\in \mathcal{R}T^0_N(\mathcal{T}_{10})$~are~plotted.  Figure \ref{fig:Cone_triang} shows that employing the adaptively modified regularization parameter, cf.\  (\hyperlink{local}{local}), the refinement takes place at and on the set  $\{\vert \nabla u\vert >0\}$. However, in Figure \ref{fig:Cone_rate}, again, it can be seen that (\hyperlink{local}{local}) does not result in an improved error decay, but an error decay comparable to (\hyperlink{global}{global}). In addition, Figure \ref{fig:Cone_rate} demonstrates that  Algorithm \ref{alg:afem} improves the experimental convergence rate of about $\mathcal{O}(h^{\smash{\frac{1}{2}}})$ predicted by \cite{CP20,Bar21} for uniform mesh-refinement to the quasi-optimal rate $\mathcal{O}(h)$, cf.~Remark~\ref{rem:optimal_rate}. In~addition, Figure \ref{fig:Cone_rate} indicates the primal-dual  error estimator is both reliable and efficient~with~respect~to~the~error~quantity~\eqref{eq:reduced_rho}.\vspace{-2mm}

     \begin{figure}[H]
        \centering
        \hspace*{-2mm}\includegraphics[width=15cm]{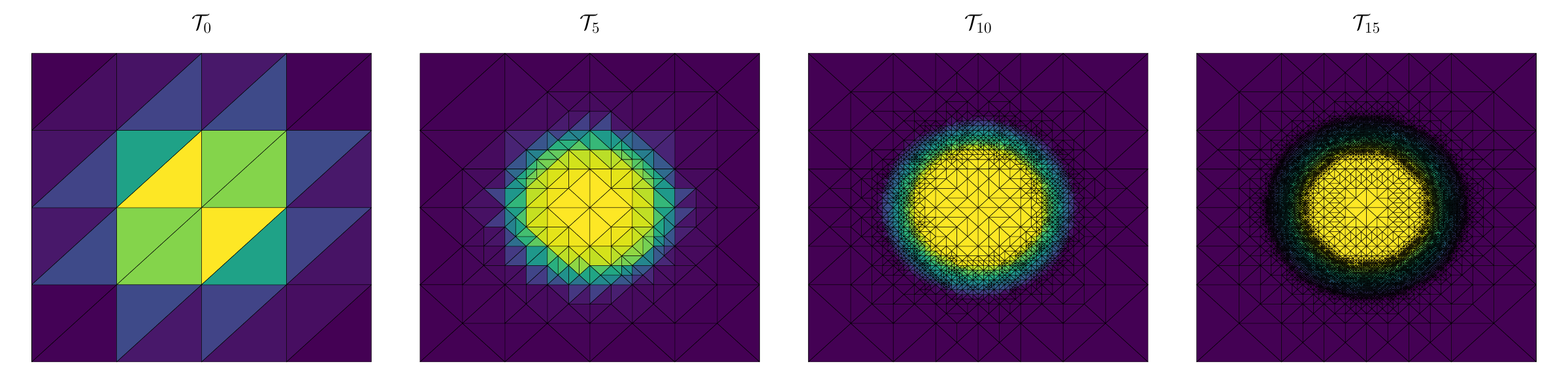}\vspace{-2.5mm}
        \hspace*{-2mm}\includegraphics[width=15cm]{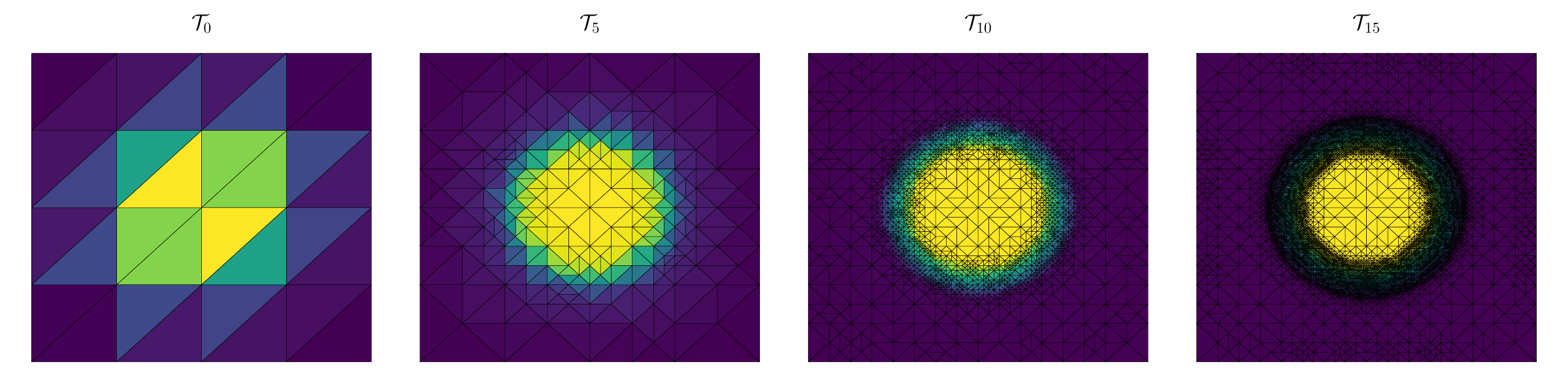}\vspace{-2.5mm}
        \caption{Initial triangulation $\mathcal{T}_0$ and 
        adaptively refined meshes $\mathcal{T}_i$, $i\in \{0,5,10,15\}$, generated by the adaptive Algorithm \ref{alg:afem} (TOP: obtained using (\protect\hyperlink{local}{local}); BOTTOM: obtained using (\protect\hyperlink{global}{global})).}
        \label{fig:Cone_triang}\vspace{-1cm}
    \end{figure}

    \begin{figure}[H]
        \centering
        \hspace*{-5mm}\includegraphics[width=14cm]{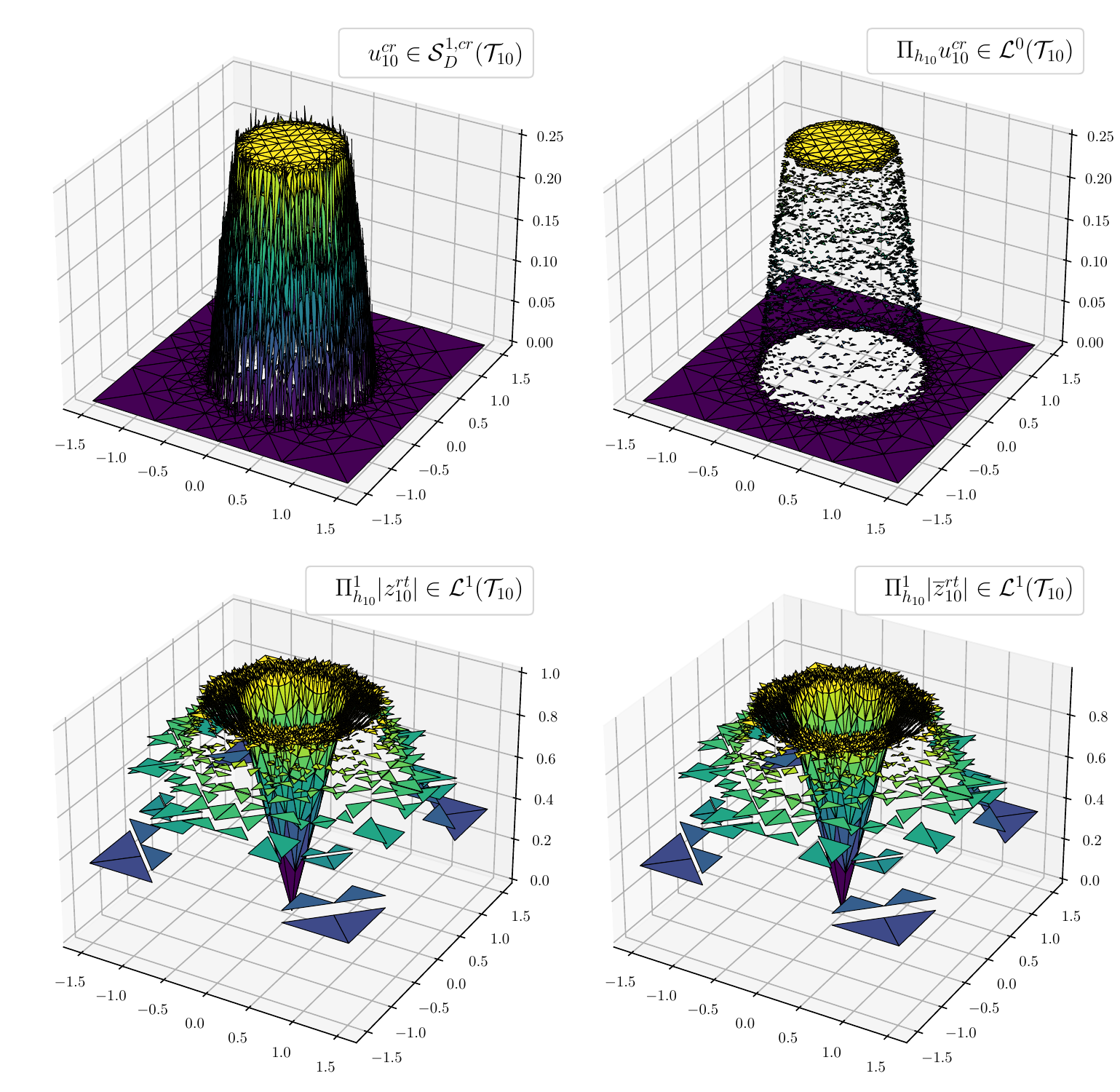}\vspace{-2.5mm}
        \caption{UPPER LEFT: Plot of $u_{10}^{\textit{\textrm{cr}}}\in \mathcal{S}^{1,\textit{\textrm{cr}}}_D(\mathcal{T}_{10})$; UPPER RIGHT: Plot of $\Pi_{h_{10}}^0u_{10}^{\textit{\textrm{cr}}}\in \mathcal{L}^0(\mathcal{T}_{10})$; LOWER LEFT:  Plot of $\Pi_{h_{10}}^1\vert z_{10}^{\textit{\textrm{rt}}}\vert\in \mathcal{L}^1(\mathcal{T}_{10})$; LOWER RIGHT: Plot of $\Pi_{h_{10}}^1\vert \overline{z}_{10}^{\textit{\textrm{rt}}}\vert\in \mathcal{L}^1(\mathcal{T}_{10})$; each obtained using (\protect\hyperlink{local}{local}).}
        \label{fig:Cone_solution}
    \end{figure}\vspace{-5mm}\enlargethispage{7mm}

    \begin{figure}[H]
        \centering
        \hspace*{-2mm}\includegraphics[width=14.5cm]{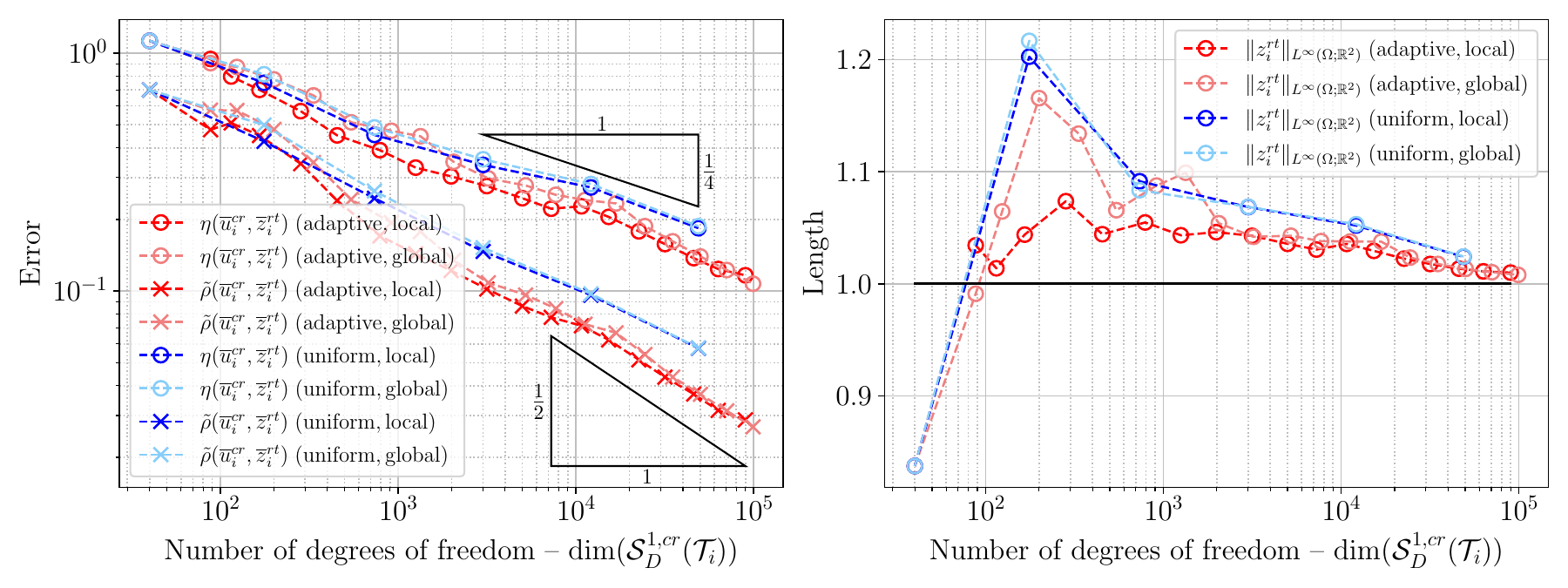}\vspace{-2.5mm}
        \caption{LEFT: Plots of $\eta(\overline{u}_i^{\textit{\textrm{cr}}},\overline{z}_i^{\textit{\textrm{rt}}})$ and $\tilde{\rho}(\overline{u}_i^{\textit{\textrm{cr}}},\overline{z}_i^{\textit{\textrm{rt}}})$ using adaptive mesh refinement for $i=0,\dots,20$ and uniform mesh refinement for $i=0,\dots, 5$; RIGHT: Plots of  $\|\overline{z}_i^{\textit{\textrm{rt}}}\|_{L^\infty(\Omega;\mathbb{R}^2)}$  using adaptive mesh refinement for $i=0,\dots,20$ and uniform mesh refinement for  $i=0,\dots, 5$.}
        \label{fig:Cone_rate}
    \end{figure}

    \subsection{Example without Dirichlet boundary condition and without exact solution}

    \hspace{5mm}We examine an example from \cite{bartels15,BM20}. In this example, we let $\Omega=(-1,1)^2$, $r=\smash{\frac{1}{2}}$, $\Gamma_D=\emptyset$, $\alpha =100$, and $g=\chi_{[-r,r]^2}\in BV(\Omega)\cap L^\infty(\Omega)$. Then, the primal solution 
    and the dual solutions are not known. However, appealing to \cite[Section 5.2]{CP20}, given the regularity of $g\in BV(\Omega)\cap L^\infty(\Omega)$, 
    we can expect the  convergence rate $\mathcal{O}(h^{\frac{1}{4}})$ using uniform mesh refinement.

    The coarsest triangulation $\mathcal{T}_0$ of Figure \ref{fig:OneDisk_triang} (initial triangulation of Algorithm~\ref{alg:afem}) consists of $16$ halved squares. More precisely, Figure \ref{fig:Square_triang} displays
    the triangulations $\mathcal{T}_i$,~${i\in \{0,15,20,25\}}$, generated by Algorithm \ref{alg:afem}
    using either the adaptively modified $\varepsilon_i\in \mathcal{L}^0(\mathcal{T}_i)$, cf.\ (\hyperlink{local}{local}), or the global choice $\varepsilon_i\coloneqq h_i^2$, cf.\ (\hyperlink{global}{global}). For both choices,
    a refinement towards the square $\partial [-r,r]^2$, i.e., the jump set $J_g$ of the data $g\in BV(\Omega)\cap L^\infty(\Omega)$ is reported.
    This behavior is also seen in Figure~\ref{fig:Square_solution}, where the regularized, discrete primal~solution~$u_{15}^{\textit{\textrm{cr}}}\in \mathcal{S}^{1,\textit{\textrm{cr}}}_D(\mathcal{T}_{15})$, the (local)
    $L^2$-projection onto element-wise constant functions 
    $\Pi_{h_{15}} u_{15}^{\textit{\textrm{cr}}}\in \mathcal{L}^0(\mathcal{T}_{15})$, and
    the (local) $L^2$-projections onto element-wise affine functions of 
    the modulus of the regularized, discrete dual solution $z_{15}^{\textit{\textrm{rt}}}\in \mathcal{R}T^0_N(\mathcal{T}_{15})$ and of the projected regularized, discrete dual~solution~${\overline{z}_{15}^{\textit{\textrm{rt}}}\in \mathcal{R}T^0_N(\mathcal{T}_{15})}$ are plotted. Figure \ref{fig:Square_triang}, in addition, shows that using the adaptively modified $\varepsilon_i\in \mathcal{L}^0(\mathcal{T}_i)$, cf.\  (\hyperlink{local}{local}), the refinement is, again, more concentrated at the jump set  $J_g$ of the data $g\in BV(\Omega)\cap L^\infty(\Omega)$. However, in Figure \ref{fig:OneDisk_rate} it can be seen that (\hyperlink{local}{local}) does not result in an improved error decay, but an error decay comparable to  (\hyperlink{global}{global}). In addition, 
    Figure \ref{fig:Square_rate} demonstrates that Algorithm \ref{alg:afem} improves the experimental convergence rate of about $\mathcal{O}(h^{\smash{\frac{1}{4}}})$ predicted by \cite[Section 5.2]{CP20} for uniform mesh-refinement to the value $\mathcal{O}(h^{\smash{\frac{2}{5}}})$.~This,~on~the~one~hand,~confirms~the optimality of the a priori error estimates established in \cite[Section 5.2]{CP20}~and,~on~the~other~hand, appealing to \cite{CP20,Bar21}, let us  expect that there exists no Lipschitz continuous dual solution to the given data $g=\chi_{[-r,r]^2}\in BV(\Omega)\cap L^\infty(\Omega)$. The reported reduced error decay~of~$\mathcal{O}(h^{\smash{\frac{2}{5}}})$~compared~to~\cite{bartels15}, where an error decay of $\mathcal{O}(h^{\smash{\frac{1}{2}}})$ is reported, might only be   pre-asymptotic and due to slight accuracy  losses resulting  due to the global scaling step. This might be due to potential singularities of a dual solution located at the corners of the square $\partial [-r,r]^2$,~as~indicated~in Figure~\ref{fig:Square_solution}. Therefore, it is possible that the error decay $\mathcal{O}(h^{\smash{\frac{1}{2}}})$  in \cite{bartels15} may be reported after surpassing a potential pre-asymptotic regime. 

    \begin{figure}[H]
        \centering
        \hspace*{-2mm}\includegraphics[width=15cm]{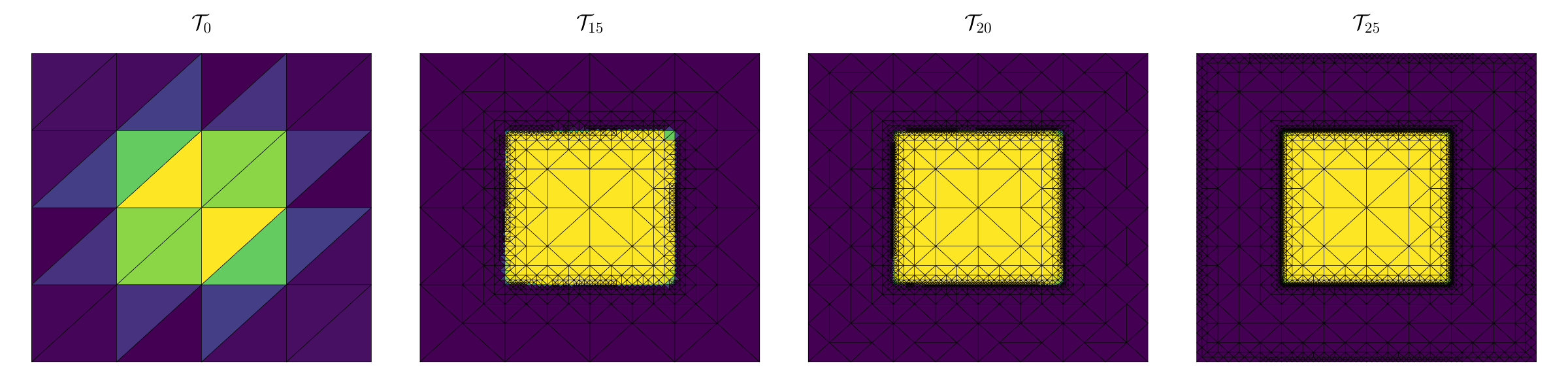}\vspace{-2.5mm}
        \hspace*{-2mm}\includegraphics[width=15cm]{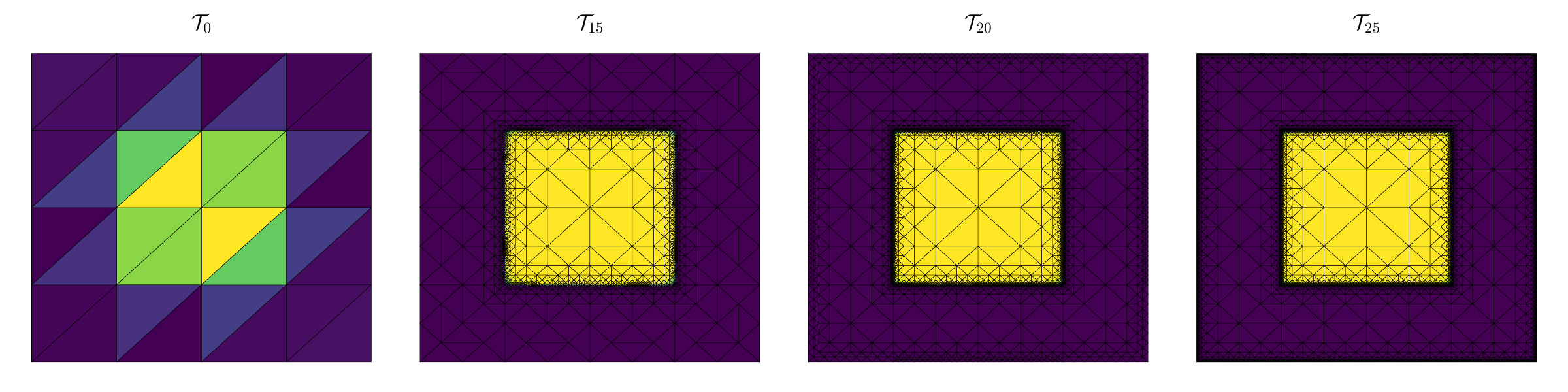}\vspace{-2.5mm}
        \caption{Initial triangulation $\mathcal{T}_0$ and 
        adaptively refined meshes $\mathcal{T}_i$, $i\in \{0,15,20,25\}$, generated by the adaptive Algorithm \ref{alg:afem} (TOP: obtained using (\protect\hyperlink{local}{local}); BOTTOM: obtained using (\protect\hyperlink{global}{global})).}
        \label{fig:Square_triang}
    \end{figure}\newpage

    \begin{figure}[H]
        \centering
        \hspace*{-5mm}\includegraphics[width=14cm]{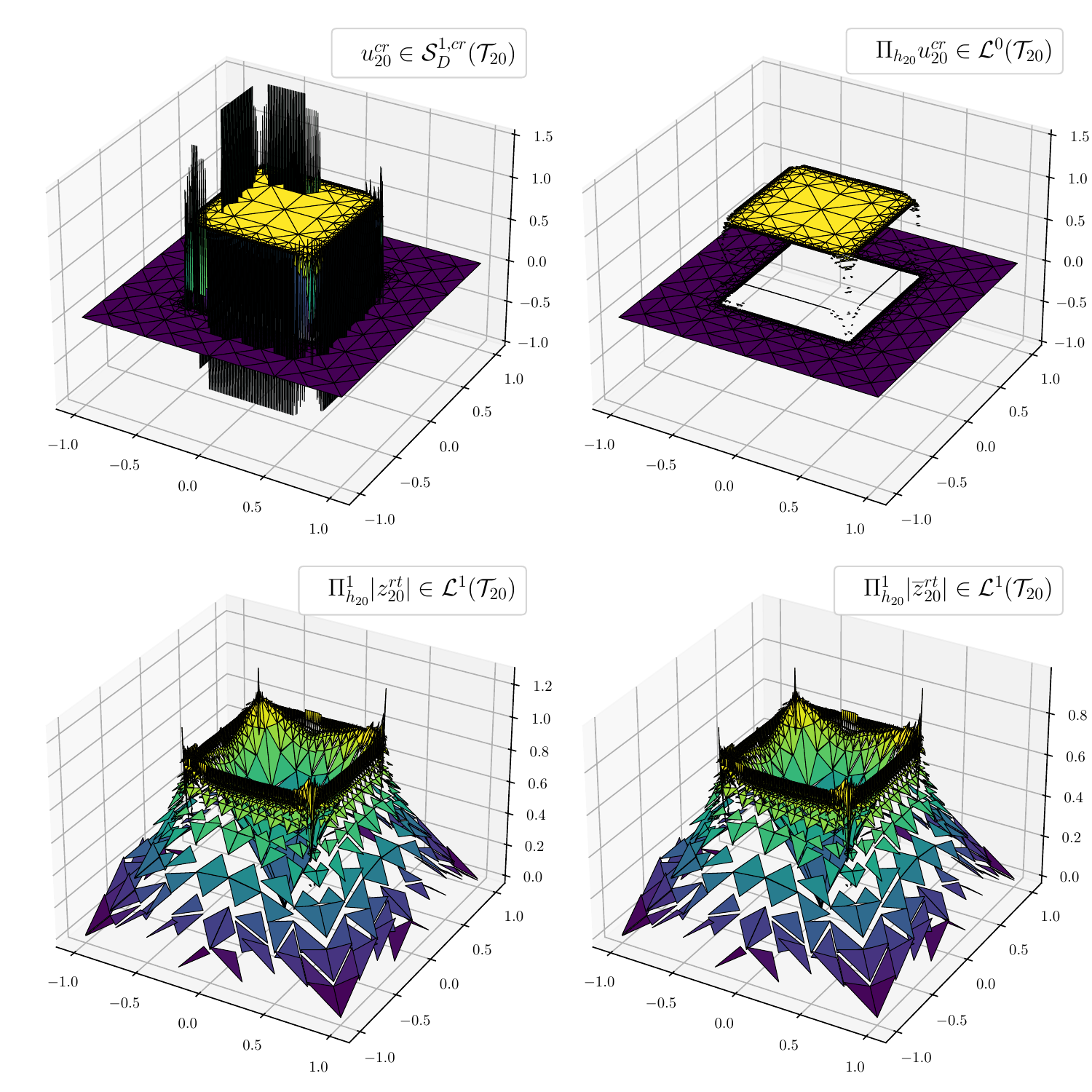}\vspace{-2.5mm}
        \caption{UPPER LEFT: Plot of $u_{20}^{\textit{\textrm{cr}}}\in \mathcal{S}^{1,\textit{\textrm{cr}}}(\mathcal{T}_{20})$; UPPER RIGHT: Plot of $\Pi_{h_{20}}u_{20}^{\textit{\textrm{cr}}}\in \mathcal{L}^0(\mathcal{T}_{20})$; LOWER LEFT: Plot of $\Pi_{h_{20}}^1\vert z_{20}^{\textit{\textrm{rt}}}\vert\in \mathcal{L}^1(\mathcal{T}_{20})$; LOWER RIGHT: Plot of $\Pi_{h_{20}}^1\vert \overline{z}_{20}^{\textit{\textrm{rt}}}\vert\in \mathcal{L}^1(\mathcal{T}_{20})$; each obtained using (\protect\hyperlink{local}{local}).}
        \label{fig:Square_solution}
    \end{figure}\vspace{-5mm}\enlargethispage{10mm}

    \begin{figure}[H]
        \centering
        \hspace*{-2mm}\includegraphics[width=14.5cm]{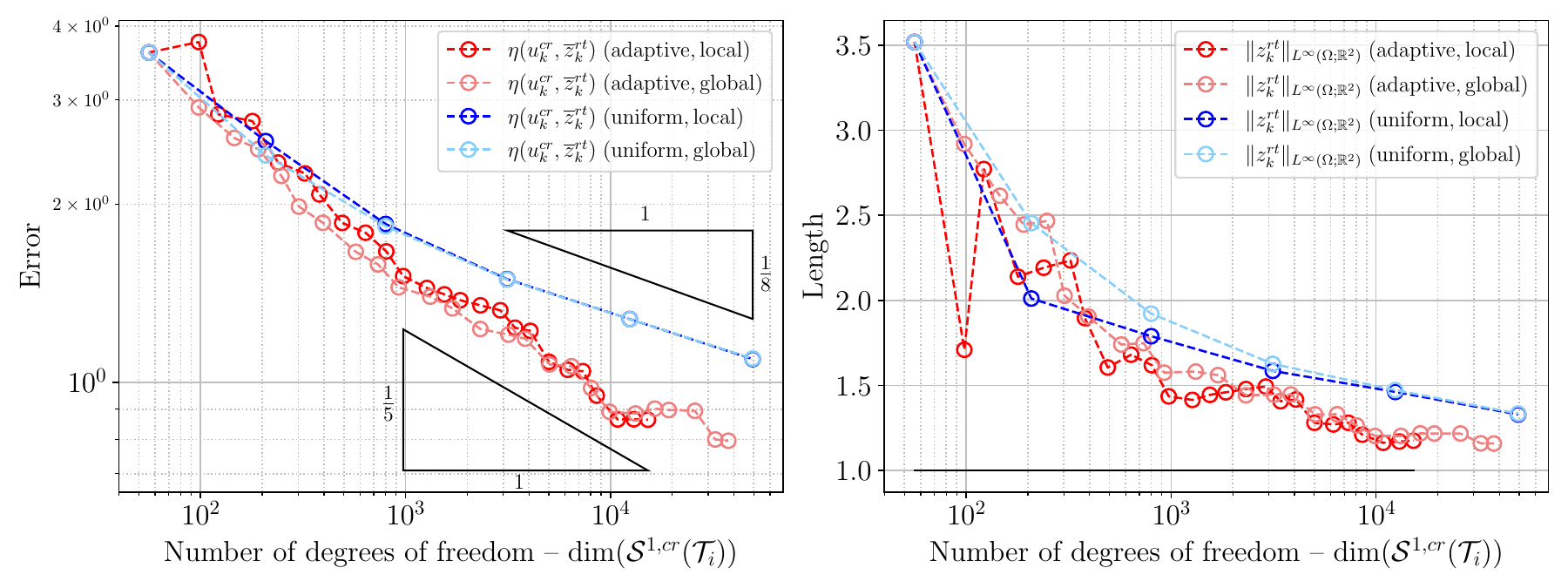}\vspace{-2.5mm}
        \caption{LEFT: Plots of $\eta(u_i^{\textit{\textrm{cr}}},\overline{z}_i^{\textit{\textrm{rt}}})$ and $\tilde{\rho}(u_i^{\textit{\textrm{cr}}},\overline{z}_i^{\textit{\textrm{rt}}})$ using adaptive mesh refinement for $i=0,\dots,25$ and uniform mesh refinement for $i=0,\dots, 5$; RIGHT: Plots of  $\|\overline{z}_i^{\textit{\textrm{rt}}}\|_{L^\infty(\Omega;\mathbb{R}^2)}$  using adaptive mesh refinement for $i=0,\dots,25$ and uniform mesh refinement for  $i=0,\dots, 5$.}
        \label{fig:Square_rate}
    \end{figure}

   \subsection{Numerical experiments with application to image processing}

   \hspace{5mm}In order to benchmark the performance of the proposed numerical scheme (cf. Algorithm~\ref{algorithm} and Algorithm \ref{alg:afem})
   in a problem related to image processing,  we examine a standard  example from the field of image processing (cf. Section \ref{subsubsec:cameraman}) and a new example (cf.~Section~\ref{subsubsec:merle}).\vspace{-1mm}\enlargethispage{11mm}
   
   \subsubsection{The Cameraman image}\label{subsubsec:cameraman}
   
   \hspace{5mm}We examine the cameraman image, which in a similar context has been considered~in~\cite{bartels15}. In this example, 
   we let $\Omega\coloneqq (0,1)^2$, $\Gamma_D=\emptyset$, $\alpha=1\mathrm{e}{+}4$,~and~${g\in BV(\Omega)\cap L^\infty(\Omega)}$ a piece-wise constant function taking its values in  the interval $[0,1]$, representing the cameraman image on a uniform triangulation with $66.049$
 nodes, cf.\ Figure \ref{fig:Cameraman_solution}.\ The adaptive algorithm~(cf.~Algorithm~\ref{alg:afem}), employed as coarsening strategy, reduces
the number of nodes within $30$ iteration steps to $25.059$ nodes which corresponds to $38.0$\% of the initial number of nodes,~which~results in a squared $L^2$-error of  $\|u_{30}^{cr}-g\|_{L^2(\Omega)}^2\hspace*{-0.1em}\approx \hspace*{-0.1em} 2.211\mathrm{e}{-}3$. The resulting coarsened image,~\mbox{represented}~by~${u_{30}^{cr}\hspace*{-0.1em}\in \hspace*{-0.1em}\mathcal{S}^{1,cr}(\mathcal{T}_{30})}$, is shown in~Figure~\ref{fig:Cameraman_solution}. The underlying grid $\mathcal{T}_{30}$ shown in Figure \ref{fig:Cameraman_triangulation} reveals the expected coarsening of the triangulation away from the edges.\vspace{-1mm}

  \begin{figure}[H]
        \centering
        \includegraphics[width=5.5cm]{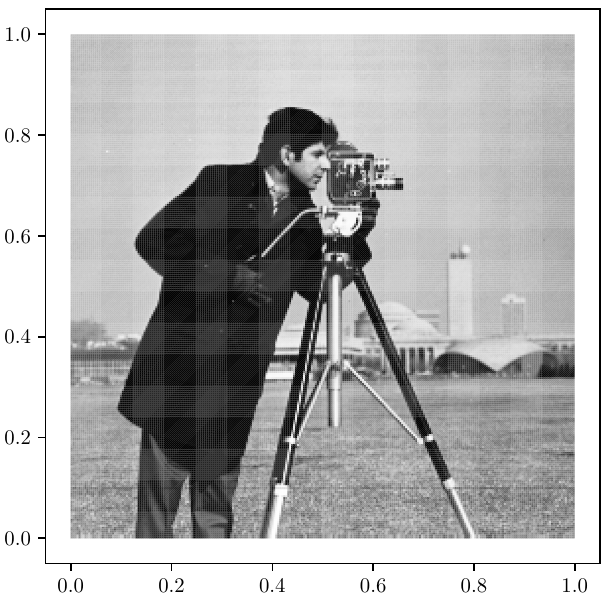}\hspace*{10mm}\includegraphics[width=5.5cm]{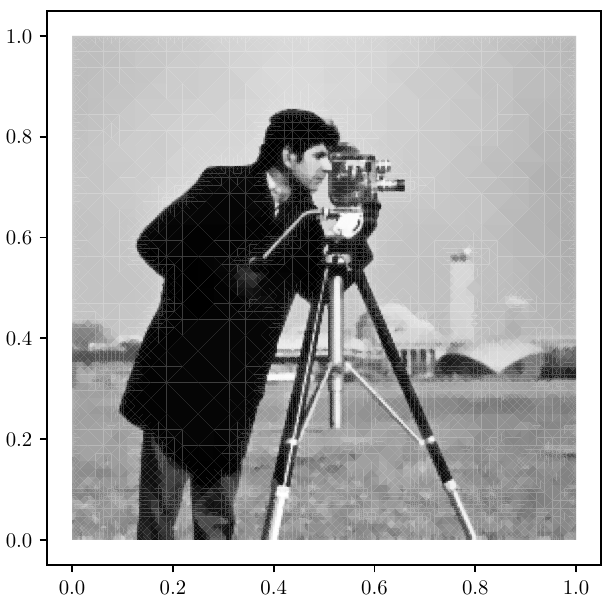}
        \caption{LEFT: Plot of the cameraman image $g\in BV(\Omega)\cap L^\infty(\Omega)$ on a grid with $66.049$~nodes; RIGHT:
        Plot of coarsened image $u_{30}^{cr}\in \mathcal{S}^{1,cr}(\mathcal{T}_{30})$ on $\mathcal{T}_{30}$ with $25.059$ nodes, cf. Figure \ref{fig:Cameraman_triangulation}.}
        \label{fig:Cameraman_solution}
\end{figure}\vspace{-0.5cm}


 \begin{figure}[H]
        \centering
      \includegraphics[width=8cm]{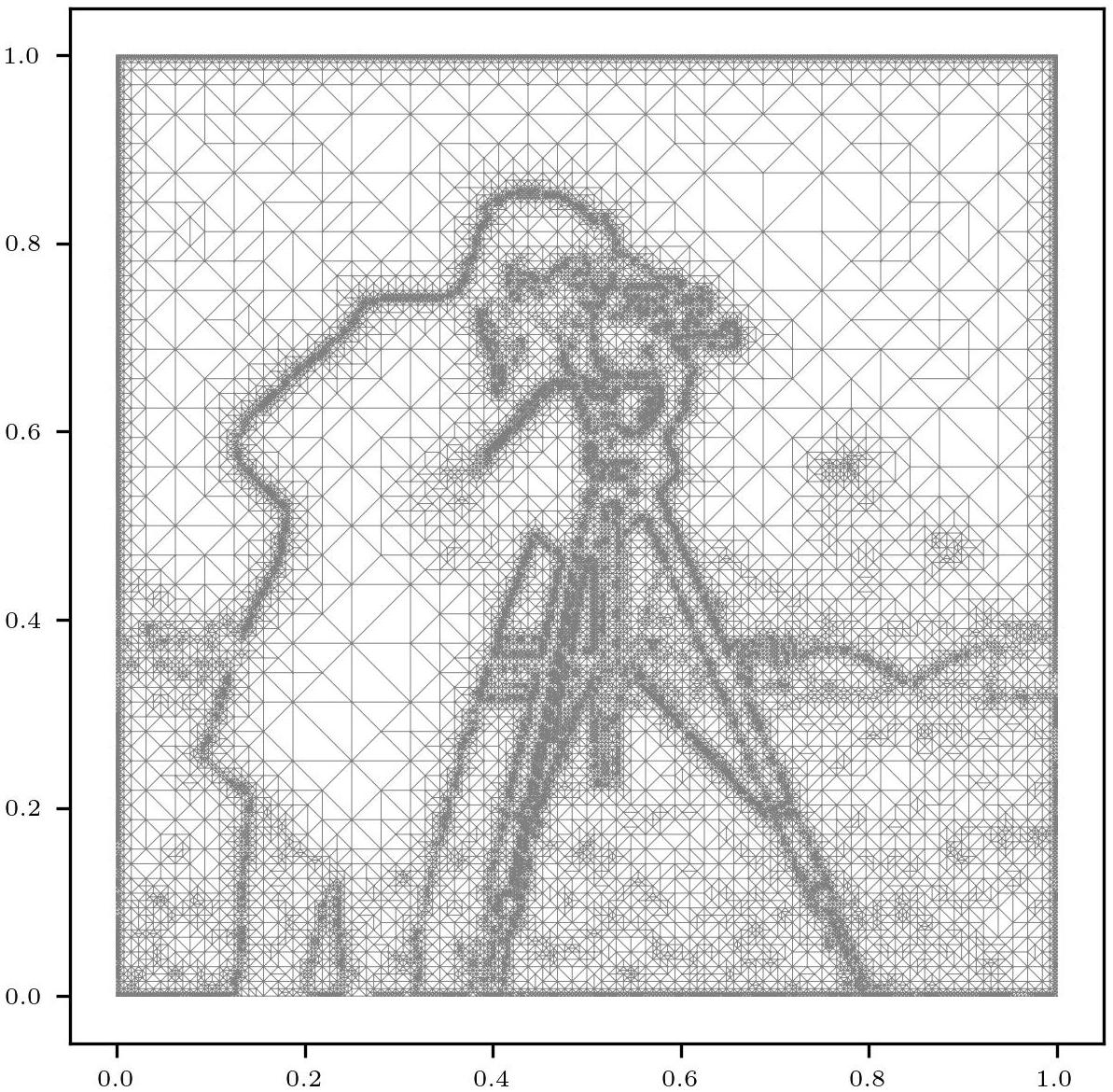}
        \caption{Triangulation $\mathcal{T}_{30}$ in the coarsened image $u_{30}^{cr}\in \mathcal{S}^{1,cr}(\mathcal{T}_{30})$ on the right~of~Figure~\ref{fig:Cameraman_solution}.}
        \label{fig:Cameraman_triangulation}
    \end{figure}

    \subsubsection{The Merle image}\label{subsubsec:merle}\enlargethispage{10mm}
   
   \hspace{5mm}We examine an image of Merle, the male cat of the second author. In this example, 
    we let $\Omega\coloneqq (0,1)^2$, $\Gamma_D=\emptyset$, $\alpha=1\mathrm{e}{+}4$, and 
    $g\in BV(\Omega)\cap L^\infty(\Omega)$ a piece-wise constant function taking its values in  the interval $[0,1]$, representing the Merle image on a uniform triangulation with $140.625$
    nodes, cf.  Figure \ref{fig:Merle_solution}. The adaptive algorithm (cf.~Algorithm~\ref{alg:afem}), employed as coarsening strategy, reduces
    the number of nodes within $30$ iteration steps to $41.749$ nodes which is $30.0$\% of the initial number of nodes, which results in a squared $L^2$-error of  $\|u_{30}^{cr}-g\|_{L^2(\Omega)}^2\approx 2.162\mathrm{e}{-}3$. The resulting  coarsened image, represented by $u_{30}^{cr}\in \mathcal{S}^{1,cr}(\mathcal{T}_{30})$,  is shown in  Figure \ref{fig:Merle_solution}. The underlying grid $\mathcal{T}_{30}$ shown in Figure \ref{fig:Merle_triangulation} reveals the expected coarsening of the triangulation away from the edges.\vspace{-0.15cm}

    \begin{figure}[H]
        \centering
        \includegraphics[width=5.5cm]{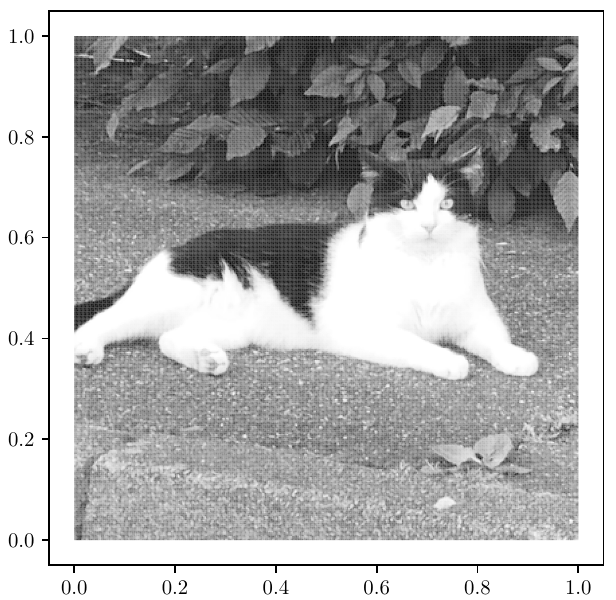}\hspace{10mm}\includegraphics[width=5.5cm]{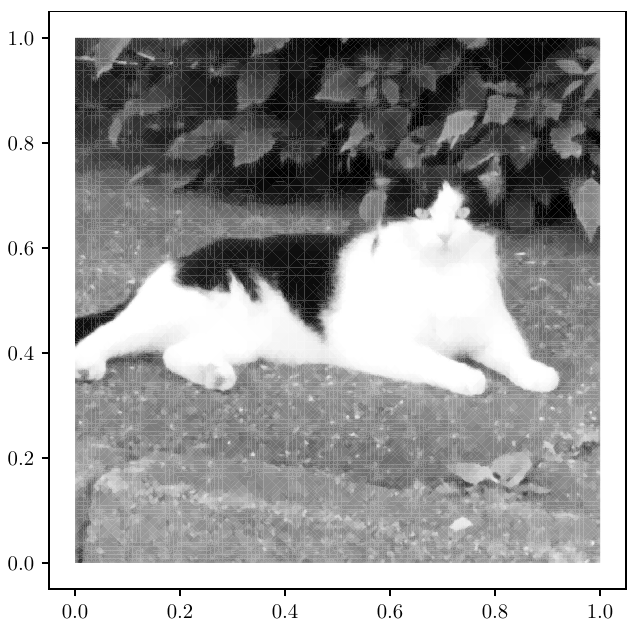}
        \caption{LEFT: Plot of the Merle image $g\in BV(\Omega)\cap L^\infty(\Omega)$ on a grid with $140.625$ nodes; RIGHT:
        Plot of coarsened image $u_{30}^{cr}\in \mathcal{S}^{1,cr}(\mathcal{T}_{30})$ on $\mathcal{T}_{30}$ with $41.749$ nodes, cf. Figure \ref{fig:Merle_triangulation}.}
        \label{fig:Merle_solution}
\end{figure}\vspace{-0.5cm}\enlargethispage{5mm}

    \begin{figure}[H]
        \centering
    \includegraphics[width=8cm]{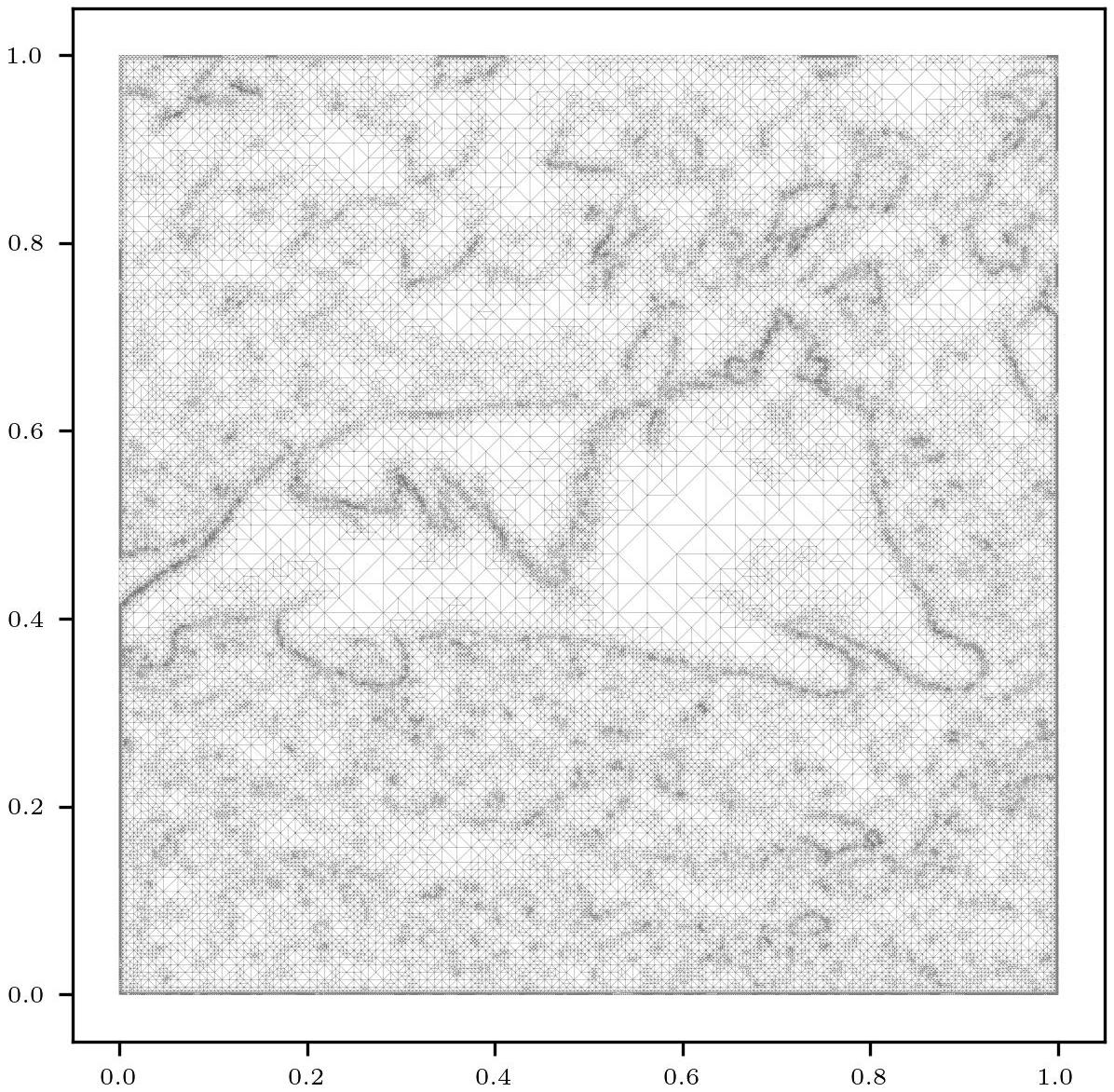}
        \caption{Triangulation $\mathcal{T}_{30}$ in the coarsened image of $u_{30}^{cr}\in \mathcal{S}^{1,cr}(\mathcal{T}_{30})$ on the right~of~Figure~\ref{fig:Merle_solution}.}
        \label{fig:Merle_triangulation}
    \end{figure}

{\setlength{\bibsep}{0pt plus 0.0ex}\small
 \providecommand{\MR}[1]{}
\providecommand{\bysame}{\leavevmode\hbox to3em{\hrulefill}\thinspace}
\providecommand{\noopsort}[1]{}
\providecommand{\mr}[1]{\href{http://www.ams.org/mathscinet-getitem?mr=#1}{MR~#1}}
\providecommand{\zbl}[1]{\href{http://www.zentralblatt-math.org/zmath/en/search/?q=an:#1}{Zbl~#1}}
\providecommand{\jfm}[1]{\href{http://www.emis.de/cgi-bin/JFM-item?#1}{JFM~#1}}
\providecommand{\arxiv}[1]{\href{http://www.arxiv.org/abs/#1}{arXiv~#1}}
\providecommand{\doi}[1]{\url{https://doi.org/#1}}
\providecommand{\MR}{\relax\ifhmode\unskip\space\fi MR }
\providecommand{\MRhref}[2]{%
  \href{http://www.ams.org/mathscinet-getitem?mr=#1}{#2}
}
\providecommand{\href}[2]{#2}

 }

\end{document}